\documentclass[11pt]{article}

\usepackage{amssymb,graphics,amsmath,amsthm,amsopn,amstext,amsfonts,color,fullpage,fancybox,hyperref,bm}
\usepackage{subfig,placeins} 
\usepackage{graphicx}
\usepackage{verbatim}
\thispagestyle{empty} \setlength{\parindent}{0pt}
\usepackage{algorithm}
\usepackage{algpseudocode}
\usepackage{longtable}
\usepackage{multirow}

\setlength{\textheight}{9in} \setlength{\topmargin}{0in}\setlength{\headheight}{0in}
\setlength{\headsep}{0in}
\setlength{\textwidth}{6.5in} \setlength{\oddsidemargin}{0in}
\setlength{\marginparsep}{0in}
\setlength{\parindent}{0pt}
\thispagestyle{empty}

\newcommand{\gap}{\vspace{0.1in}}
\newcommand{\epc}{\hspace{1pc}}
\newcommand{\thalf}{{\textstyle{\frac{1}{2}}}}
\newcommand{\wh}{\widehat}
\newcommand{\wt}{\widetilde}

\newtheorem{theorem}{Theorem}[section]
\newtheorem{proposition}[theorem]{Proposition}
\newtheorem{lemma}[theorem]{Lemma}
\newtheorem{corollary}[theorem]{Corollary}
\newtheorem{definition}[theorem]{Definition}
\newtheorem{example}[theorem]{Example}
\newtheorem{remark}[theorem]{Remark}
\newtheorem{assumption}[theorem]{Assumption}

\newcommand{\E}{{\rm I\!E}}

\newcommand{\II}{{\rm I\!I}}

\newcommand{\zbld}{{\mbox{\boldmath $z$}}}

\newcommand{\IP}{{\rm I}\!{\rm P}}

\def\begar{$$\begin{array}{lll}}
\def\endar{\end{array}$$}
\def\begarlab{\begin{equation} \begin{array}{lll} \label}
\def\endarlab{\end{array} \end{equation}}

\def\ds1{{\mathrm{1 \hspace{-2.6pt} I}}}

\def\calD{{\cal D}}

\def\calF{{\cal F}}
\def\calG{{\cal G}}

\def\calL{{\cal L}}
\def\calM{{\cal M}}
\def\calN{{\cal N}}

\title{
	Statistical Analysis of Stationary Solutions of\\ Coupled Nonconvex Nonsmooth Empirical Risk Minimization}

\author{Zhengling Qi\thanks{Department of Decision Sciences, George Washington University, DC 20052.
		{\tt Email: qizhengling@gwu.edu}.}\and Ying Cui\thanks{The Daniel J.\ Epstein
		Department of Industrial and
		Systems Engineering, University of Southern California, Los Angeles, CA 90089.
		{\tt Emails: yingcui@usc.edu; jongship@usc.edu}.  The work of these two authors was based on research partially
		supported by the U.S.\ National Science Foundation grant IIS--1632971 and by the Air Force
		Office of Scientific Research under Grant Number FA9550-18-1-0382.} \and
	Yufeng Liu\thanks{Department of Statistics and Operations Research,
		University of North Carolina, Chapel Hill, NC 27599.
		{\tt Email: yfliu@email.unc.edu}.
		The work of this author was based on research partially supported by the U.S.\ National Science Foundation grant IIS-1632951
		and National Institute of Health grant R01GM126550.} \and Jong-Shi Pang\footnotemark[2]}

\begin{document}
	
\maketitle
	
\begin{abstract}
\noindent This paper has two main goals: (a) establish several statistical properties---consistency, asymptotic distributions,
and convergence rates---of stationary solutions and values of a class of coupled nonconvex and nonsmooth
empirical risk minimization problems, and (b) validate these properties by a noisy amplitude-based phase
retrieval problem, the latter being of much topical interest.
Derived from available data via sampling, these empirical risk minimization problems are 
the computational workhorse of a population risk model which involves the minimization of an expected value of a random functional.
When these minimization problems are nonconvex, the computation of their globally optimal solutions is elusive.
Together with the fact that the expectation operator cannot be evaluated for general probability distributions, it becomes
necessary to justify whether the stationary solutions of the empirical problems are practical approximations of the stationary 
solution of the population problem.  When these two features, general distribution and nonconvexity, are coupled with 
nondifferentiability that often renders the problems ``non-Clarke regular'', the task of the justification becomes challenging.
Our work aims to address such a challenge within an algorithm-free setting.
The resulting analysis is therefore different from the much of the analysis in the recent literature
that is based on local search algorithms.  Furthermore, supplementing the classical minimizer-centric analysis, our results
offer a first step to close the gap between computational optimization and asymptotic analysis of coupled nonconvex nonsmooth statistical
estimation problems, expanding the former with statistical properties of the practically obtained solution and providing the latter with
a more practical focus pertaining to computational tractability.

\end{abstract}
	
\medskip

\begin{center}
\parbox{0.95\hsize}{{\bf KEY WORDS:}\;  Statistical analysis; Consistency; Convergence rates; Directional stationarity; Asymptotic distribution;
Nonconvexity; Nonsmoothness; Phase retrieval problem.}
\end{center}

	
\section{Introduction}
Given a probability space $(\Omega, \calF, \IP)$, where $\Omega$ is the sample space, $\calF$ is the $\sigma$-field generated by $\Omega$,
and $\IP$ is the corresponding probability measure,
a parameterized random 
function ${\cal L} : \mathbb{R}^p \times \Omega \to \mathbb{R}$, and a compact convex set $X \subseteq \mathbb{R}^p$,
we consider the population risk minimization problem
\begin{equation}\label{eq:population}
\displaystyle\operatornamewithlimits{minimize}_{ x\in X}\;\, \mathcal{M}(x)\,  \triangleq \, \E_{\wt{\omega}}\, [\, \mathcal{L}(x;\wt{\omega})\,].
\end{equation}
In this setting, $\widetilde{\omega}$ is a random vector defined on the probability triple $(\Omega, \mathcal{A}, \mathbb{P})$;
the tilde on $\widetilde{\omega}$ signifies a random variable, whereas $\omega$ without the tilde will refer
to a realization of the random variable.  This convention of distinguishing a random variable and its realizations will be used throughout
the paper.
Subsequently, structure of ${\cal L}$ will be imposed for the purpose of analysis.
The expectation function in \eqref{eq:population} often does not have a closed form expression so that algorithms for solving  deterministic optimization
problems may not be directly applicable.  There are two classical Monte-Carlo sampling based approaches to solve the expected-value minimization
problem \eqref{eq:population}:
stochastic approximation (SA) and sample average approximation (SAA).  The SA proposed by Robbins and Monro~\cite{RobbinsMonro1951}
in the 1950s is a stochastic (sub)gradient method that updates each iterate along the opposite (sub)gradient direction estimated from
one or a small batch of samples.  It has attracted a great attention recently in machine leaning and stochastic programming communities,
partially due to its scalability and easy fitting to the online settings.  Interested readers are referred to~\cite{Chung54, Polyak90,Polyak92, robustSA09}
and the references therein for the development of the SA.
The SAA method, on the other hand,  takes $N$ independent and
identically distributed (i.i.d) random samples $\omega^1, \ldots, \omega^{\,N}$ with the same distribution
as $\omega$ and estimate the expectation function with the sample average approximation,
resulting in the empirical risk minimization or the M-estimation problem:
\begin{equation}\label{eq: saa}
\displaystyle\operatornamewithlimits{minimize}_{x \in X} \;\,  \mathcal{M}_{N}(x)\, \triangleq \,\displaystyle\frac{1}{N}\sum_{n = 1}^{N} \calL(x\,;\,\omega^{\,n}),
\end{equation}
	
	
There is a vast literature on the asymptotic analysis of the M-estimators/SAA solutions related to the optimal solution of the expectation
problem \eqref{eq:population} as the sample size $N$ goes to infinity.  The first celebrated consistency result  dates back to 1920s
by R.A.\ Fisher in \cite{ra1922mathematical,fisher1925theory} for the maximum likelihood estimation (MLE) problems.
An proof of the consistency of MLE is given by Wald in \cite{wald1949note}. Notice that the MLE is a special case of the
problem \eqref{eq:population} if we take the function $\mathcal{L}$ as the negative logarithm of probability density/mass functions.
Other important developments of the global optimal solutions of the M-estimation in  the statistical literature include
\cite{huber1967behavior,chernoff1954distribution,lecam1970assumptions,self1987asymptotic}.
The consistency and asymptotic distributions of the local optimal solutions for smooth optimization problems are studied by Geyer in \cite{geyer1994asymptotics}.
Most recently, Royset et al. \cite{Royset19,RoysetWets18} employed variational analysis to study statistical properties of M-estimators of
non-parametric problems.  In the field of  stochastic programming,
the study of the asymptotic behavior of the optimal solutions  begins with the work of Wets \cite{Wets79}, and is further developed in \cite{DupacovaWets88,Shapiro89}
with inequality constraints and nonsmooth objective functions using the tools from nonsmooth analysis.
Recently, the article \cite{DentchevaPenevRucz17} studies the statistical estimation of composite risk functionals and
risk optimization problems and establishes a central limit formula of their optimal values when an
estimator of the risk functional is used.
Interested readers are referred to the monographs \cite[Section~5.2]{van1998asymptotic} and \cite[Section~5]{ShapiroDR09} for comprehensive treatment of the
asymptotic analysis of the M-estimators/SAA solutions.  However, all these results pertain to the global or local minimizers
of the optimization problems or the (globally) optimal objective values, regardless of the possibility
that the latter problems may be nonconvex. 
Since in general one cannot find a global or local optimal solution to the
nonconvex optimization problems, any consistency results that are based on the global or local minimizers are at best ideal targets for such problems
and have little practical significance.   The situation becomes more serious
when nondifferentiability is coupled with nonconvexity because there is a host of stationary solutions of the resulting optimization problems.
Typically, the sharper the stationarity solution is (sharp in the sense of least relaxation in its definition), the more difficult it is to compute.
It is thus important to understand whether in practice, the focus should be placed on computing sharp stationary solutions (which
distinguish themselves as being the ones that must satisfy all other relaxed definitions of stationarity)
that potentially require higher computational costs versus computing some less demanding solutions.
Our derived results show that the sharpness of the stationarity at the empirical level is preserved at the population level,
thus favoring the former.  Furthermore, via a noisy amplitude-based phase retrieval problem that is of
much topical interest, we demonstrate that a stationary point of a relaxed kind can have no bearings to a minimizer, both in the
population and empirical problems.
In short, there is presently a gap in the literature between the asymptotic minimizer-centric analysis of statistical
estimation problems in the presence of (coupled) nonconvexity and nondifferentiability
and the computational tractability of the solutions being analyzed.  Our work offers a first step in closing this gap.
	
\gap
	
When the expected-value objective function ${\cal M}$ in \eqref{eq:population} is differentiable, the stationary points
of problem \eqref{eq:population} can be characterized by the solutions of the following stochastic generalized equation
\[
0 \, \in \, \nabla \, \E_{\wt{\omega}}\left[ \mathcal{L}(x;\wt{\omega}) \right] + \mathcal{N}(x;X),
\]
where $\mathcal{N}(x;X)$ denotes the normal cone of  $X$ at $x\in X$ as in convex analysis, see, e.g., \cite{RockafellarConvex}.
Similarly, a stationary point  of the empirical risk minimization \eqref{eq: saa} satisfies
\[
0 \, \in \,\displaystyle\frac{1}{N}\sum_{n = 1}^{N} \nabla_x \, \mathcal{L}(x;\omega^{\, n}) + \mathcal{N}(x;X).
\]
The consistency and asymptotic distributions of the solutions for such a stochastic generalized equation have been established
in the literature such as \cite{KingRockafellar93,GurkanRobinson99,Shapiro03}.  See also \cite{MeiMontanari17} for the correspondence of
stationary solutions between the empirical risk and the population risk when the sample size is sufficiently large.
	
\gap
	
While the consistency of the global optimal values and solutions is mainly due to the uniform law of large numbers for real-valued
random functions, the consistency of the stationary solutions of nonconvex nonsmooth problems needs the uniform law of large numbers
for set-valued subdifferentiable mappings.
It is well known that Attouch's celebrated theorem  on the equivalence of the epiconvergence of a sequence of convex functions
and the graphical convergence of the subdifferential \cite{Attouch84} fails for general nonconvex functions, which makes the
asymptotic analysis for the SAA a challenging task when applied to a nonconvex problem.  For a special case where the function
$\mathcal{L}(\,\bullet\,,\omega)$ is Clarke regular  \cite[Section 2]{Clarke83} for almost all $\omega\in \Omega$, the uniform
law of large numbers for random set-valued Clarke regular mappings is established in \cite{shapiro2007uniform}
and the consistency of Clarke stationary points is also provided therein.
	
	\gap
	
	Many modern statistical and machine learning problems consist of inherently coupled nonconvex and nonsmooth objective functions.
	More specifically, the objective functions therein cannot be decomposed into either the sum of a smooth nonconvex function and a nonsmooth convex function,
	or the composition of a convex function and a smooth function; see the examples in Section~\ref{sec: example}.
	Such functions often fail to satisfy the Clarke regularity so that the results in \cite{shapiro2007uniform} are no longer valid.
	In particular, the inclusions \eqref{eq:inclusion1} and \eqref{eq:inclusion2} can be strict.
	Furthermore, the classical (let alone uniform) law of large numbers of random variables cannot be easily extended to such random functions.
	Adding to this difficulty, the discontinuity of $\partial \calM$ results in the possible failure of the continuous convergence of the sample average functions.
	Back to the optimization problem in \eqref{eq: saa}, a natural way to tackle the nondifferentiable objective function seems to be the smoothing approach.
	Xu and Zhang \cite{XuZhang09} show that the stationary point of the smoothed problem converges to a so-called weak (Clarke) stationary point of the
	original expectation problem.  This is a very nice theoretical result.  However, the Lipschitz constant of the gradient of the smoothed problem
	goes to infinity as the smoothing parameter goes to zero.  This fact makes it difficult for the smoothed version of \eqref{eq: saa} to be solved
	efficiently by either gradient-type or Newton-type methods, thus weakening the practical significance of the mentioned convergence result.
	
	\gap
There is an increasing literature that are focused on studying the convergence of a particular algorithm for nonconvex M-estimation problems
with the guarantee of statistical accuracy.  For example, relying on the restricted strong convexity, the references
\cite{loh2011high,loh2015regularized,loh2017statistical} show that gradient decent method with a proper initialization converges to the
statistical ``truth" for different regression models with nonconvex objective functions.  Adding to these references, the
paper \cite{MeiMontanari17} recently establishes a one-to-one correspondence of stationary solutions of non-convex M-estimation problems
by analyzing the landscape of the empirical problem.  However, existing literature relies heavily on the smoothness of M-estimation problems
and their special structure such as restricted strong convexity, which limit their applications on analyzing a broad class of modern statistical
and machine learning problems, such as the examples in Section~\ref{sec: example}.
	
\gap
	
In this work, we are taking a first step to establish the consistency  of the stationary point for a  class of coupled nonconvex and nonsmooth
empirical risk minimization problems.  Our focus is placed on the
asymptotic behavior of the directional stationary points of problem \eqref{eq: saa}, which distinguish themselves as being the sharpest kind among	
all stationary solutions of such objectives,
such as the Clarke stationarity that defined in \eqref{Clarke:saa}.  We consider a class of composite functions $\calL$ that covers a wide range
of practical applications spanning modern statistical estimation and machine learning.  For problems in this class, it has been shown
in \cite{CuiPangSen18} that their empirical directional stationary points are computationally tractable by iteratively solving 
convex subprograms.
Our results demonstrate that the additional efforts as required by the algorithm in the latter reference
for computing the empirical directional stationary point of a sharp kind
pay off not only at the empirical level, but also at the population level.  It should be noted that
our general analysis is independent of particular algorithms and thus is broadly applicable.
Finally, we apply our developed theory to the noisy amplitude-based phase retrieval problem
and show that every empirical directional stationary point, which can be computed by an algorithm described in \cite{CuiPangSen18},
is $\sqrt{N}$-consistent to a global minimizer of the corresponding population problem.
As our approach is algorithm-free, the analysis is different from much of the existing literature
such as \cite{ma2019optimization} that requires algorithm-based local search.


\gap

To summarize, the contributions of this paper are as follows:

\gap

$\bullet $ we directly address the asymptotic convergence of the SAA stationary solutions
for nonconvex nondifferentiable problems without Clarke regularity of the objective function, and establish results that are
not linked to particular algorithms;

\gap

$\bullet $ we establish the consistency and derive the convergence rate of empirical local minimizers to population local minimizers
for a class of composite nonconvex, nonsmooth, and non-Clarke regular functions;

\gap

$\bullet $ we apply our derived results to a topical problem to support the value of this kind of algorithm-free statistical
analysis which can be validated by a rigorous algorithm if needed.
	
\section{Problem Structures and Examples} \label{sec: example}
	
Many practical statistical estimation and machine learning problems, even though with nonconvex and nondifferentiable objective functions,
often have special structures. Supervised learning is a class of machine learning problems that infers a function to map inputs $\xi :\Xi \to \mathbb{R}^{d}$
to the outputs $\zbld:\mathcal{Z} \to \mathbb{R}$,  jointly defined on the probability  space $(\Omega, \calF, \IP)$, where $\Omega = \Xi \times \mathcal{Z}$.
The objective function of the supervised learning  takes the form of
\begin{equation}\label{eq: cvx composite dc}
\mathcal{L}(x;\xi,\zbld) \,\triangleq\,  h\circ \left(m(x;\xi)\, ;\, \zbld \right),
\end{equation}
where $h(\,\bullet\,;\,\zbld):\mathbb{R}\to\mathbb{R}$ is a univariate loss function measuring the error between a possibly
nonconvex nondifferentiable statistical model $m(\,\bullet\,; \,\xi):\mathbb{R}^p\to\mathbb{R}$ with the input feature $\xi$ and the output response $\zbld$.
In fact, the above function  can also be interpreted as an unsupervised learning model  when  the random variable $\zbld$ is absent.  In the notation of
(\ref{eq:population}), the pair $(\xi,\zbld)$ constitutes the random variable $\omega$.  At this juncture, we should clarify our convention of the probability
triple $(\Omega, \calF, \IP)$ projected onto the input and output spaces $\Xi$ and ${\cal Z}$, especially when we want to discuss about properties of the
function $m(x;\xi)$ which involves the input variable $\xi \in \Xi$ only.  Letting $P_{\Xi} : \Omega \to \Xi$ be the natural projection of the Cartesian product
$\Omega = \Xi \times {\cal Z}$ onto $\Xi$, an arbitrary subset $S \subseteq \Xi$ can be associated with its inverse image in $\Omega$ under $P_{\Xi}$; a statement
such as ``$S$ has measure one'' then means that $P_{\Xi}^{-1}(S)$, which is a subset of $\Omega$, has measure one.  A similar meaning holds if $S$ is a subset of
${\cal Z}$.  In the rest of this paper, this convention is applied to almost sure events in the spaces $\Xi$ and ${\cal Z}$.  We say that a subset
(of either $\Xi$, ${\cal Z}$, or $\Omega$) is a
{\sl probability-one set} if its probability measure is one.

	\gap
	
	We are particularly interested in  a class of difference-of-max-convex parametric model $m(\,\bullet\,; \,\xi)$ with the form of
	\begin{equation} \label{eq:dcp}
	m(x ; \xi) \, \triangleq \, \underbrace{\displaystyle{
			\max_{1 \leq j \leq k_f}
		} \, f_{j}(x ; \xi)}_{\mbox{denoted $f(x ; \xi)$}} \, - \, \underbrace{\displaystyle{
			\max_{1 \leq j \leq k_g}
		} \, g_{j}(x ;\xi)}_{\mbox{denoted $g(x ; \xi)$}},
	\end{equation}
	where each  $f_j(\,\bullet ; \xi)$ and $g_j(\,\bullet ; \xi)$ are convex differentiable functions from $\mathbb{R}^p$ to $\mathbb{R}$.
	This model is pervasive in the contemporary fields of data science. Below we list two such applications.
	
	\begin{example}[Piecewise affine regression]\rm
		Linear regression is perhaps the  simplest parametric model to estimate the relationship between  the response variable $\zbld$ and the covariate information $\xi$.
		Piecewise linear regression is a generalization of the classical linear regression to  enhance the model flexibility.
		It is known that every piecewise affine function can be written in the form of
		\[
		m(x\,; \,\xi) \, = \, \max_{1\leq j\leq k_f} \,\left( (a^{\,j})^{\top} \xi +\alpha_j\,\right) \,  - \, \max_{1\leq j\leq k_g} \,\left( (b^{\,j})^{\top} \xi + \beta_j\,\right)
		\]
		with the parameter $x \,\triangleq \,\left\{ \left( a^{\,j}, \alpha_j \right)_{j=1}^{k_f}, \left( b^{\,j}, \beta_j \right)_{j=1}^{k_g} \right\}
		\in \mathbb{R}^{(k_f + k_g)(d + 1)}$ \cite{Scholtes02}.
		Obviously, this piecewise affine model is a special case of the model \eqref{eq:dcp}.
		Taking the quadratic function $h(\,\bullet\,; \zbld) = (\zbld - \bullet)^2$ as the loss measure to estimate the parameter $x$,
		we obtain the following optimization problem
		\[
		\begin{array}{lll}
		\hspace{0.3in} & \displaystyle{
			\operatornamewithlimits{\mbox{minimize}}_x
		} \quad &  \E_{\wt{\omega}} \, \left[ \, \wt{\zbld} - \displaystyle\max_{1\leq j\leq k_f} \,\left((a^{\,j})^{\top} \wt{\xi} + \alpha_j\right) +
		\displaystyle\max_{1\leq j\leq k_g} \,\left((b^{\,j})^{\top} \wt{\xi} + \beta_j\,\right)  \,\right]^2 \\ [0.25in]
		& \mbox{subject to} \quad & x = \left\{(a^j, \alpha_j)_{j=1}^{k_f}, (b^j, \beta_j)_{j=1}^{k_g}\right\} \, \in \, X \, \subseteq \, \mathbb{R}^{(k_f + k_g)(d+1)}.
		\end{array}
		\]
		Notice that the overall objective function in the above optimization problem is nonconvex.  More seriously, the nonconvexity and nondifferentiability
		within the square bracket are coupled.  In the special case of the ReLu function,
		which is basically the plus function (see Example~\ref{example:ReLu} below), it was shown in \cite[Lemma~57 and below]{JinJordan18}
		the expected-value function is not differentiable at the point $x = 0$.
		
		\gap
		
		Alternatively, we may take the least absolute deviation as the loss function $h(\,\bullet\,;\zbld)=  |\zbld - \bullet|$
		and consider the robust piecewise affine regression problem
		\[
		\begin{array}{lll}
		\hspace{0.3in} & \displaystyle{
			\operatornamewithlimits{\mbox{minimize}}_{x }
		} \quad &  \E_{\wt{\omega}} \, \left[ \,\left|\, \wt{\zbld} - \displaystyle\max_{1\leq j\leq k_f} \,\left((a^{\,j})^{\top} \wt{\xi} + \alpha_j\right) +
		\displaystyle\max_{1\leq j\leq k_g} \,\left((b^{\,j})^{\top} \wt{\xi} + \beta_j\,\right) \,\right| \,\right] \\ [0.25in]
		& \mbox{subject to} \quad & x = \left\{(a^j, \alpha_j)_{j=1}^{k_f}, (b^j, \beta_j)_{j=1}^{k_g}\right\} \, \in \, X \, \subseteq \, \mathbb{R}^{(k_f + k_g)(d+1)},
		\end{array}
		\]
		which is again a nonconvex and nonsmooth stochastic optimization problem.
	\end{example}
	
	\begin{example}[2-layer neural network model with the ReLu activation function] \label{example:ReLu}
		\rm
		
		Consider a 2-layer neural network model with the rectified linear unit (ReLU) activation function that takes the form of
		\begin{equation}\label{defn: 2-layer ReLu}
		m(x;\xi) \, \triangleq \, \max\left( \, b^{\top} \max\left( A\xi + a, 0 \right) + \beta, \, 0 \right),
		\end{equation}
		where  $x$ consists of the two vectors $b$ and $a$ each in $\mathbb{R}^k$, the matrix $A \in \mathbb{R}^{k \times d}$,
		and scalar $\beta \in \mathbb{R}$.  The two occurrences of the max ReLu functions indicate the action of 2 hidden layers,
		where the ``max'' operation of $A\zbld + a$ and $0$ is taken componentwise.  Variation of the model where only the first layer
		is subject to the ReLu activation and extensions to more than 2 layers can be similarly treated, although the latter leads to
		much more complicated formulations.  No matter what loss function $h(\,\bullet\,;\zbld)$ takes,
		the square loss, the cross entropy function or the huber loss, the overall objective of $\mathcal{L}$ admits a coupled nonconvex and
		nonsmooth structure that is challenging to handle.  Nevertheless, we show below that the function \eqref{defn: 2-layer ReLu} can be
		expressed as the difference of two convex piecewise continuously differentiable functions, thus reducing it to a special case of
		the model \eqref{eq:dcp}.  For notational simplicity, we omit the vector $a$ since it can be absorbed in $A$ as an extra
		column with $\xi$ redefined by $(\xi,1)\in \mathbb{R}^{d+1}$.  With this simplification, we derive
		$$
		\begin{array}{rl}
		m(x\,;\,\xi) \, = & \max\big( \, \max(b^{\top},\,0)\,\max(A\xi,\,0) - \max(-b^{\top}, \,0)\,\max(A\xi,\,0) + \beta\,,\, 0 \, \big) \\ [0.15in]
		= &\displaystyle\thalf \, \displaystyle\max\big( \, \| \, \max(b\,,\,0)+ \max(A\xi,\,0) \, \|^2+ \| \, \max(-b\,,\, 0) \, \|^2 + 2\beta \,- \\[ 0.1in]
		&\qquad\qquad\qquad\qquad  \| \, \max(-b\,,\, 0) + \max(A\xi,\,0) \, \|^2 - \| \, \max(b\,,\,0) \, \|^2, \, 0 \, \big) \\ [0.1in]
		=  & \thalf \,\max\left( \begin{array}{l}
		\| \, \max(b,\, A\xi,\,b+A\xi,\,0 ) \, \|^2 + \| \, \max(-b,\,0) \, \|^2 + 2\beta, \\ [0.1in]
		\| \, \max(-b\,,A\xi,\, -b+A\xi,\, 0) \, \|^2 + \| \, \max(b, \, 0) \, \|^2
		\end{array} \right) - \\ [0.25in]
		& \thalf \, \left( \, \| \, \max(-b\,,A\xi,\, -b+A\xi,\, 0) \, \|^2 + \| \, \max(b\,,\,0)\|^2 \, \right).
		\end{array}
		$$
		Although the terms $\| \max( \pm b,\, A\zbld, \, \pm b+A\zbld, \,0 ) \|^2$ are not differentiable,
		they can each be represented as the pointwise maximum of finitely many convex differentiable functions.  In fact, we have,
		with $A_{i \bullet}$ denoting the $i$-th row of the matrix $A$,
		\[ \begin{array}{l}
		\left\| \, \max( \, \pm b\,,A\xi,\, \pm b+A\xi,\, 0 \, ) \, \right\|^2 \\ [5pt]
		\epc = \, \displaystyle{
			\sum_{i=1}^k
		} \, \max\left( \, \max( \, \pm b_i, \, 0 \, )^2, \,
		\max( \, A_{i \bullet}\xi, \, 0 \, )^2, \, \max( \, \pm b_i + A_{i \bullet}\xi, \, 0 \, )^2 \, \right)
		\\ [5pt]
		\epc = \, \displaystyle{
			\max_{(\lambda_{1}, \lambda_2, \lambda_3) \in \Delta}
		} \, \left\{ \,\displaystyle{
			\sum_{i=1}^k
		} \, \left[ \, \underbrace{\lambda_{1,i} \, \max( \pm b_i,0 )^2
			+ \lambda_{2,i} \, \max( A_{i \bullet}\xi, 0 )^2 + \lambda_{3,i} \, \max( \pm b_i + A_{i \bullet}\xi,0 )^2}_{\mbox{nonnegative, convex, differentiable}}
		\, \right] \, \right\},
		\end{array} \]
		where $\Delta \triangleq \left\{(\lambda_1, \lambda_2, \lambda_3)\in \{0,1\}^{3k}\,\bigg|\,
		\displaystyle\sum_{j=1}^3\lambda_{j,i} = 1, \; \forall\; i = 1,\ldots, k\,\right\}$ is a finite set of binary indicators.
		Substituting the above expression into the function
		$m(x\,;\,\xi)$, we see that this function can be written in the form of \eqref{eq:dcp}
		for some positive integers $k_f$ and $k_g$ and convex functions $f_j(x;\xi)$ and $g_j(x;\xi)$ that involve the squared
		plus function: $t_+^2 \triangleq \max(t,0)^2$ for $t \in \mathbb{R}$; it is easy to check that the latter univariate
		function is convex, once but not twice continuously differentiable.
	\end{example}
	
	\section{Concepts of Stationarity} \label{sec:phase}
	Our primary focus in this paper is on the consistency of a sharp kind of stationary solutions
	of the M-estimation problem \eqref{eq: saa}, which we term a directional stationary point.
	Let $\varphi$ be a locally Lipschitz continuous function defined on an open set $S \subseteq \mathbb{R}^p$.
	The one-sided directional derivative of $\varphi$ at the vector $x \in \mathbb{R}^p$ along
	the direction $v \in \mathbb{R}^p$ is defined as
	\[
	\varphi^{\, \prime}(x;v) \, \triangleq \, \lim_{\tau \downarrow 0} \, \frac{\varphi(x + \tau \, v) - \varphi(x)}{\tau}
	\]
	if the limit exists; $\varphi$ is said to be directionally differentiable at $x \in S$ if $\varphi^{\, \prime}(x\, ;\, v)$ exists for all $v \in \mathbb{R}^p$.
	Recalling that the set $X$ is assumed convex, we say $\bar{x}\in X$ is a  {\sl d(irectional)-stationary}
	point of the program $\displaystyle\operatornamewithlimits{minimize}_{x\in X} \, \varphi(x)$  if
	\[
	\varphi^{\, \prime}(\bar{x}\, ;\, x - \bar{x}) \, \geq \, 0, \epc \forall \ x \, \in \, X.  
	\]
	The d(irectional)-stationary point, in its dual form, satisfies
	\[
	0 \, \in \, \wh{\partial} \left(\,\varphi(\bar{x}) + \delta_X(\bar{x})\,\right),
	\]
	where $\delta_X(\bar{x})$ is the indicator function of the set $X$; i.e., $\delta_X(x) \triangleq \left\{ \begin{array}{ll}
	0 & \mbox{if $x \in X$} \\
	\infty & \mbox{otherwise}
	\end{array} \right.$ and
	\[
	\wh{\partial} \, \phi(\bar{x}) \, \triangleq \, \left\{ \, v \, \in \mathbb{R}^p \; \bigg| \;
	\liminf_{\bar{x} \neq x\to\bar{x}} \; \frac{\phi(x) - \phi(\bar{x})-v^{\top}(x-\bar{x})}{\| \, x-\bar{x} \, \|} \, \geq \, 0 \, \right\}
	\]
	is the {\sl regular subdifferential} of an extended-value function $\phi:\mathbb{R}^p\to (\infty, +\infty]$ \cite[Section 8.B]{RockafellarRWets98}.
	A d-stationary point is in contrast to a C(larke)-stationary point \cite{Clarke83} which by definition satisfies
	\[
	0 \, \in \, \partial_C \left(\,\varphi(\bar{x}) + \delta_X(\bar{x})\,\right),
	\]
	where the Clarke subdifferential is:
	\[
	\partial_C \, \phi(\bar{x})  \,= \, \left\{ \, v \, \in \, \mathbb{R}^p \; \bigg| \;
	\limsup_{x\to\bar{x},\; t\downarrow 0}  \; \frac{\phi(x+tw) - \phi(x)-t\,v^{\top}w}{t} \, \geq \, 0, \quad \forall \; w \, \in \, \mathbb{R}^p \, \right\}.
	\]
	Unlike the Clarke subdifferential $\partial_C  \,\phi$ which is outer semicontinuous \cite[Proposition 2.1.5]{Clarke83}; the regular subdifferential
	mapping is not ``robust''.
	This is one source of difficulty for analyzing the consistency of the d-stationarity for problem \eqref{eq: saa} in its general form.
	Yet, as we will demonstrate in Section~\ref{sec:phase} via a practical example, analyzing the consistency of a C-stationary point could be meaningless
	as far as a (local) minimizer is concerned.  For evaluation purposes, we note that
	\begin{equation} \label{eq:Clarke subdiff}
	\begin{array}{ll}
	\partial_C \, \phi(\bar{x}) \, = & \mbox{convex hull of} \\ [5pt]
	& \left\{ \, \displaystyle{
		\lim_{k \to \infty}
	} \, \nabla \phi(x^k) \, \mid \,
	\mbox{each $x^k$ is a differentiable point of $\phi$ and $\displaystyle{
			\lim_{k \to \infty}
		} \, x^k \, = \, x$} \, \right\}.
	\end{array} \end{equation}
	In the context of \eqref{eq:population} with a convex $X$, $\bar{x} \in X$ is a C-stationary point if
	\begin{equation}\label{Clarke:saa}
	0 \, \in \,   \partial_C \, \calM(\bar{x}) + \mathcal{N}(\bar{x};X) ,
	\end{equation}
	where, as in standard convex analysis, ${\cal N}(\bar{x};X)$ is the normal cone of $X$ at $\bar{x}$.
	Similarly, we say $\bar{x}\in X$ is  a C-stationary point of \eqref{eq: saa} if
	\[
	0 \, \in \, \partial_C  \left(\displaystyle\frac{1}{N}\sum_{n = 1}^{N} \, \mathcal{L}(\bar{x}\,;\,\omega^{\, n})\right) + \mathcal{N}(\bar{x};X),
	\]
	where the Clarke subdifferential is taken with respect to the variable $x$.
	Notice that in general we have the inclusions
	\begin{equation}\label{eq:inclusion1}
	\partial_C \, \calM(x) \, \subseteq \, \E_{\wt{\omega}}\left[ \, \partial_C \, \calL(x;\wt{\omega}) \, \right],
	\end{equation}
	where $\E$ is taking as the Aumann integration (also called the selection expectation) \cite[Definition~1.12]{Molchanov05}, and
	\begin{equation}\label{eq:inclusion2}
	\partial_C  \left(\displaystyle\frac{1}{N}\sum_{n = 1}^{N} \,\mathcal{L}(x;\omega^{\, n})\right) \, \subseteq \,
	\displaystyle\frac{1}{N}\,\sum_{n = 1}^{N} \,\partial_C \, \mathcal{L}(x;\omega^{\, n}).
	\end{equation}
	When both of the functions $\calM$ and $\calL$ are Clarke regular, the above two inclusions become equality.
	The consistency of C-stationary points under Clarke regularity is  established in \cite{shapiro2007uniform}.

	\section{The Composite Difference-max Estimation Problem}
	
	In the rest of this paper, we focus on the coupled nonconvex nonsmooth program \eqref{eq: saa} with the loss function ${\cal L}$ given by
	the composite function (\ref{eq: cvx composite dc}) where $h(\,\bullet\,;\zbld)$ is a nonnegative convex function and the model
	$m(\,\bullet; \xi)$ is given by \eqref{eq:dcp}.  The nonnegativity condition of $h$ is satisfied by practically all the interesting
	applications in machine learning and statistical estimation.  The special form of the statistical model $m$ can be exploited to
	characterize d-stationarity in terms of certain convex programs.  Specifically, we consider the empirical risk minimization problem:
	\begin{equation}\label{eq:empirical composite saa}
	\displaystyle{
		\operatornamewithlimits{minimize}_{x \in X}
	} \  \mathcal{M}_{N}(x)\, \triangleq \, \displaystyle{
		\frac{1}{N}
	} \, \displaystyle{
		\sum_{n = 1}^{N}
	} \, \calL(x;\xi^{\, n};\zbld^{\, n}), \epc \mbox{with }
	\calL(x;\xi^{\, n};\zbld^{\, n}) \, \triangleq \, h( m(x;\xi^{\, n});\zbld^{\, n} ),
	\end{equation}
	where $m(x\,;\xi^{\, n})$ is given by (\ref{eq:dcp}), as a sample average approximation of the population model
	\begin{equation}\label{eq:population composite saa}
	\displaystyle{
		\operatornamewithlimits{minimize}_{x \in X}
	} \ \mathcal{M}(x) \,\triangleq \, \E_{\wt{\omega}}\, \left[\, \mathcal{L}(x;\wt{\xi};\wt{\zbld}) \, \right].
	\end{equation}
	Before proceeding to the mathematical analysis, we should highlight the main technical challenges associated with the
	above problems.  Foremost among these is a workable understanding and characterization of d-stationarity to facilitate the
	analysis.  It turns out that such a characterization (see Lemma~\ref{lemma:d-stat}) is available that involves (a) linearizations
	of the functions $f_j(\bullet;\xi)$ and $g_j(\bullet;\xi)$, and (b) the maximizing index sets of the functions $f(\bullet;\xi)$ and $g(\bullet;\xi)$
	(see below), both varying randomly due to the variable $\xi$.   When embedded in the expectation, such random variations, especially the index sets
	over which the linearizations are to be chosen, are not easy to treat.  Our approach is to employ a notion of stationarity
	(see Subsection~\ref{subsec:composite stationarity})
	that on one hand is computationally tractable and on the other
	hand is not overly relaxed as Clarke stationarity, which as illustrated by the phase retrieval problem, can be practically meaningless.  This constitutes
	the main contribution of our work.
	
	\gap
	
	Throughout, several assumptions will be imposed; the first of which is the following finite mean assumption: for every $x \in X$,
	\[
	\E_{\wt{\omega}}\left[\, \mathcal{L}(x;\wt{\xi};\wt{\zbld})  \, \right] \, < \, +\infty.
	\]
	For any $\xi \in \Xi$ and any nonnegative scalar $\varepsilon$, we consider the ``$\varepsilon$-argmax'' indices of the pointwise max functions
	$f$ and $g$ in \eqref{eq:dcp} as elements of the following two sets:
	\[
	\left\{\begin{array}{ll}
	{\cal A}_{f;\varepsilon}(x;\xi) \,\triangleq\, \left\{\, 1\leq \bar{j}\leq k_f \, | \;
	f_{\, \bar{j}\, }(x; \xi) \geq \underset{1 \leq j \leq k_f}{\max} \; f_j(x; \xi) - \varepsilon \right\} \\[0.2in]
	{\cal A}_{g;\varepsilon}(x;\xi) \,\triangleq\, \left\{\, 1\leq \bar{j} \leq k_g \; | \;
	g_{\, \bar{j}\, }(x; \xi) \geq \, \underset{1 \leq j\leq k_g}{\max} \; g_j(x; \xi) - \varepsilon \right\},
	\end{array}\right.
	\]
	respectively.  If $\varepsilon = 0$, the above sets reduce to the ``argmax'' indices of $f$ and $g$, for which we omit the subscript $\varepsilon$ and write them as
	\begin{equation}\label{def:active}
	\left\{\begin{array}{ll}
	{\cal A}_{f}(x;\xi) \,\triangleq\, \left\{\, 1\leq \bar{j}\leq k_f \; | \; f_{\,\bar{j}\,}(x; \xi)  = \underset{1 \leq j \leq k_f}{\max} \;
	f_j(x; \xi) \,\right\} \\ [0.2in]
	{\cal A}_g(x;\xi) \,\triangleq\, \left\{\, 1\leq \bar{j}\leq k_g \; | \; g_{\,\bar{j}\,}(x; \xi) \,= \underset{1 \leq j \leq k_g}{\max} \; g_j(x; \xi) \;\right\}.
	\end{array}\right.
	\end{equation}
	Notice the if $f_{\,\bar{j}\,}(x;\xi) = \underset{1 \leq j \leq k_f}{\max} \; f_j(x;\xi)$ for all $\bar{j} \in \{ 1, \cdots, k_f \}$, then
	${\cal A}_{f;\varepsilon}(x \, ;\, \xi) = {\cal A}_f(x \, ;\, \xi) = \{ 1, \cdots, k_f \}$ for all $\varepsilon \geq 0$.  A similar remark
	applies to the family of $g$-functions.  In general, the above-defined
	index sets have the inclusion property stated in the lemma below wherein $\mathbb{B}_\delta(\bar{x})$ denotes the (closed) Euclidean
	ball with center at $\bar{x}$ and radius $\delta > 0$.
	
	\begin{lemma}\label{lm:set inclusion}\rm
		Suppose that there exist positive constants $\mbox{Lip}_f(\xi)$, $\mbox{Lip}_g(\xi)$ and $c_0$ and a probability-one subset $\Xi^{\, 1}$
		of $\Xi$ such that for all $\xi \in \Xi^{\, 1}$,
		$\max\left( \, \mbox{Lip}_f(\xi), \, \mbox{Lip}_g(\xi) \, \right) \leq c_0$, and for all $x^1$ and $x^2$ in $X$,
		\begin{equation} \label{eq:Lipschitz fg}
		\begin{array}{lll}
		| \, f_j(x^1;\xi) - f_j(x^2;\xi) \, | & \leq & \mbox{Lip}_f(\xi) \, \| \, x^1 - x^2 \, \|_2, \epc \forall \ j \, = \, 1, \cdots, k_f, \\ [0.1in]
		| \, g_j(x^1;\xi) - g_j(x^2;\xi) \, | & \leq & \mbox{Lip}_g(\xi) \, \| \, x^1 - x^2 \, \|_2, \epc \forall \ j \, = \, 1, \cdots, k_g.
		\end{array} \end{equation}
		Then, for every scalar $\varepsilon > 0$,
		a scalar $\delta > 0$ exists such that for all
		$\varepsilon^{\, \prime} \in [ 0, \varepsilon ]$, all $\xi \in \Xi^{\, 1}$, and all pairs $x^1$ and $x^2$ in $X$
		satisfying $\| x^1 - x^2 \|_2 \leq \delta$,
		it holds that
		${\cal A}_{f;\varepsilon^{\, \prime}}(x^1; \xi) \subseteq {\cal A}_{f;2\varepsilon}(x^2;\xi)$ and
		${\cal A}_{g;\varepsilon^{\, \prime}}(x^1; \xi) \subseteq {\cal A}_{g;2\varepsilon}(x^2; \xi)$.
	\end{lemma}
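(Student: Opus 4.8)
The plan is to establish the two inclusions index-by-index by a direct triangle-inequality estimate, the only quantitative point being that the threshold $\delta$ can be taken to depend on $\varepsilon$ and the \emph{global} constant $c_0$ alone, which is exactly what the hypothesis $\max(\mbox{Lip}_f(\xi),\mbox{Lip}_g(\xi))\le c_0$ on the probability-one set $\Xi^{\,1}$ buys us. Concretely, I would fix $\varepsilon>0$ and set $\delta\triangleq \varepsilon/(2c_0)$; then take arbitrary $\varepsilon^{\,\prime}\in[0,\varepsilon]$, $\xi\in\Xi^{\,1}$, and $x^1,x^2\in X$ with $\|x^1-x^2\|_2\le\delta$, and pick any $\bar{j}\in{\cal A}_{f;\varepsilon^{\,\prime}}(x^1;\xi)$.

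From the Lipschitz bound \eqref{eq:Lipschitz fg} together with $\mbox{Lip}_f(\xi)\le c_0$ and $\|x^1-x^2\|_2\le\delta$, I would record two one-sided estimates: first, $f_{\bar{j}}(x^2;\xi)\ge f_{\bar{j}}(x^1;\xi)-c_0\delta$; second, for every index $j$ one has $f_j(x^1;\xi)\ge f_j(x^2;\xi)-c_0\delta$, hence $\max_{1\le j\le k_f}f_j(x^1;\xi)\ge \max_{1\le j\le k_f}f_j(x^2;\xi)-c_0\delta$. Chaining these with the defining inequality of membership in ${\cal A}_{f;\varepsilon^{\,\prime}}(x^1;\xi)$, namely $f_{\bar{j}}(x^1;\xi)\ge \max_{1\le j\le k_f}f_j(x^1;\xi)-\varepsilon^{\,\prime}$, yields
\[
f_{\bar{j}}(x^2;\xi)\,\ge\,\max_{1\le j\le k_f}f_j(x^2;\xi)-\varepsilon^{\,\prime}-2c_0\delta\,\ge\,\max_{1\le j\le k_f}f_j(x^2;\xi)-2\varepsilon,
\]
where the last inequality uses $\varepsilon^{\,\prime}\le\varepsilon$ and $2c_0\delta=\varepsilon$. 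Thus $\bar{j}\in{\cal A}_{f;2\varepsilon}(x^2;\xi)$, which is the first inclusion; the inclusion for the $g$-family follows verbatim with $g$, $k_g$, $\mbox{Lip}_g$ in place of $f$, $k_f$, $\mbox{Lip}_f$, and using $\mbox{Lip}_g(\xi)\le c_0$.

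There is no genuine obstacle in this argument — it is a routine Lipschitz estimate — so the ``hard part'' is really just bookkeeping: one must make sure the chosen $\delta$ is extracted from $\varepsilon$ and $c_0$ and not from the $\xi$-dependent moduli $\mbox{Lip}_f(\xi),\mbox{Lip}_g(\xi)$, since the statement quantifies over all $\xi\in\Xi^{\,1}$ with a single $\delta$; the uniform bound $c_0$ is precisely what permits this. Finally, the case $\varepsilon^{\,\prime}=0$, i.e.\ the genuine argmax sets of \eqref{def:active}, is automatically subsumed because the estimate above used only $\varepsilon^{\,\prime}\ge 0$ together with $\varepsilon^{\,\prime}\le\varepsilon$, so no separate treatment is needed. $\hfill\square$
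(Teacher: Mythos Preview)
Your proof is correct and essentially identical to the paper's: both choose $\delta=\varepsilon/(2c_0)$, use the Lipschitz bound to shift $f_{\bar{j}}$ and the max each by at most $c_0\delta$, and combine with $\varepsilon'\le\varepsilon$ to land in ${\cal A}_{f;2\varepsilon}(x^2;\xi)$. The only cosmetic difference is that the paper carries $\mbox{Lip}_f(\xi)$ through the intermediate inequalities before invoking $\mbox{Lip}_f(\xi)\le c_0$, whereas you substitute $c_0$ immediately.
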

	
	\begin{proof}  In what follows, the random realization $\xi$ is restricted to be in the set $\Xi^{\, 1}$.
		For any index $j = 1, \cdots, k_f$, we have
		\[
		f_j(x^1;\xi) \, = \, f_j(x^2;\xi) + \left[ \, f_j(x^1;\xi) - f_{\bar{j}}(x^2;\xi) \, \right]
		\, \leq \, f_j(x^2;\xi) + \mbox{Lip}_f(\xi) \, \| \, x^1 - x^2 \, \|_2;
		\]
		similarly,
		\[
		\underset{1 \leq j \leq k_f}{\max} \; f_j(x^1; \xi) \, \geq \,
		\underset{1 \leq j \leq k_f}{\max} \; f_j(x^2; \xi) - \mbox{Lip}_f(\xi) \, \| \, x^1 - x^2 \, \|_2.
		\]
		Thus, for $\bar{j} \in {\cal A}_{f;\varepsilon^{\, \prime}}(x^1; \xi)$, since
		$f_{\bar{j}}(x^1;\xi) \, \geq \, \underset{1 \leq j \leq k_f}{\max} \; f_j(x^1; \xi) - \varepsilon^{\, \prime}$, we deduce,
		for any positive $\delta \leq \displaystyle{
			\frac{\varepsilon}{2 \, c_0}
		}$ and provided that $\| x^1 - x^2 \|_2 \leq \delta$,
		\[ \begin{array}{lll}
		f_{\bar{j}}(x^2;\xi) & \geq & \underset{1 \leq j \leq k_f}{\max} \; f_j(x^2; \xi) - 2 \, \mbox{Lip}_f(\xi) \, \| \, x^1 - x^2 \, \|_2 - \varepsilon^{\, \prime}
		\\ [0.15in]
		& \geq & \underset{1 \leq j \leq k_f}{\max} \; f_j(x^2; \xi) - 2 \, \delta \, \mbox{Lip}_f(\xi) - \varepsilon^{\, \prime}
		\, \geq \,  \underset{1 \leq j \leq k_f}{\max} \; f_j(x^2; \xi) - 2 \, \varepsilon.
		\end{array}
		\]
		Hence $\bar{j} \in {\cal A}_{f;2\varepsilon}(x^2; \xi)$; thus ${\cal A}_{f;\varepsilon^{\, \prime}}(x^1; \xi) \subseteq {\cal A}_{f;2\varepsilon}(x^2; \xi)$.
		Similarly, we can establish the same inclusion for $g$.
	\end{proof}
	
	Since
	\[ \begin{array}{l}
	m(x^1 ; \xi) - m(x^2 ; \xi) \\ [0.1in]
	\epc = \, \left[ \, \displaystyle{
		\max_{1 \leq j \leq k_f}
	} \, f_j(x^1 ; \xi) - \displaystyle{
		\max_{1 \leq j \leq k_f}
	} \, f_j(x^2 ; \xi) \, \right] -  \left[ \, \displaystyle{
		\max_{1 \leq j \leq k_g}
	} \, g_j(x^1 ; \xi) - \displaystyle{
		\max_{1 \leq j \leq k_f}
	} \, g_j(x^2 ; \xi) \, \right],
	\end{array} \]
	the inequalities (\ref{eq:Lipschitz fg}) imply for all $x^1$ and $x^2$ in $X$ and almost all $\xi \in \Xi$,
	\begin{equation} \label{eq:Lipschitz m}
	| \, m(x^1 ; \xi) - m(x^2 ; \xi) \, | \leq \, \left( \, \mbox{Lip}_f(\xi) + \mbox{Lip}_g(\xi) \, \right) \, \| \, x^1 - x^2 \, \|_2.
	\end{equation}
	
	\subsection{Composite $\varepsilon$-strong d-stationarity} \label{subsec:composite stationarity}
	
	To facilitate the consistency analysis in the next section, we need to introduce a restriction of d-stationarity for the empirical problem
	(\ref{eq: saa}) known as
	$\varepsilon$-strong d-stationarity that corresponds to a given scalar $\varepsilon > 0$.
	The latter restricted concept of stationarity is more stable at the nondifferentiable points of the empirical risk objective.
	
	\gap
	
	Given convex functions $f$ and $\{g_j\}_{j=1}^k$ on $\mathbb{R}^n$ and a convex set $X\subseteq \mathbb{R}^n$,
	one can equivalently define $\bar{x}\in X$ to be a d-stationary point of the difference-of-convex programming
	\begin{equation}\label{opt:dc}
	\displaystyle\operatornamewithlimits{minimize}_{x\in X} \; \theta(x) \,\triangleq \, f(x) - \max_{1\leq j\leq k} g_j(x)
	\end{equation}
	if  for all $j$ satisfying $g_j(\bar{x}) =g(\bar{x})$,
	\[
	\theta(\bar{x}) \,\leq \, f(x) - \left[\, g_j(\bar{x}) + \nabla g_j(\bar{x})^\top (x-\bar{x})\,\right] + \frac{c}{2}\|x - \bar{x}\|^2, \quad \forall \; x\in X,
	\]
	for an constant $c \geq 0$;
	see, for example, \cite[Proposition 5]{PangRazaviyaynAlvarado16}.
	In a recent paper \cite{LuZhouSun18},  the authors introduce  a concept called $\varepsilon$-strong d-stationary solution,
	which pertains to a point $\bar{x}\in X$ satisfying the above inequality  for all $j$ such that $g_j(\bar{x}) \geq  g(\bar{x}) - \varepsilon$.
	Since our problem \eqref{eq:empirical composite saa} does not have the dc decomposition as in \eqref{opt:dc} due to the composition of
	a convex function $h(\,\bullet\,;\zbld)$ and a difference-of-convex function $m(\,\bullet\,;\xi)$, we are led to the extended
	$\varepsilon$-strong d-stationarity concept that is the subject of this subsection.
	
	\gap
	
	We start from the following lemma that allows us to characterize a d-stationary
	point of (\ref{eq:empirical composite saa}) as an optimal solution of a (nonconvex) optimization problem; see
	Lemma~\ref{lemma:d-stat}.
	
	\begin{lemma}\rm\label{lemma:cvx decomp} (\cite[Lemma 3]{CuiPangSen18})
		Any univariate convex function can be represented as the sum of a convex non-decreasing function and a  convex non-increasing function.
		Moreover, if the given function is Lipschitz continuous, then so are the two decomposed functions with the same Lipschitz constant.
		\hfill $\Box$
	\end{lemma}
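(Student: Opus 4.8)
The plan is to construct the decomposition explicitly around a global minimizer of $\phi$, and to dispatch the remaining case by monotonicity. Take $\phi$ real-valued and convex on $\mathbb{R}$, so that its right derivative $\phi^{\,\prime}_+$ is finite and non-decreasing. If $\phi$ has no global minimizer it must be monotone: were $\phi^{\,\prime}_+$ to take both a strictly negative value and a nonnegative value, then, being non-decreasing, it would force the point $x^{*}\triangleq\sup\{\,t:\phi^{\,\prime}_+(t)<0\,\}$ to satisfy $\phi^{\,\prime}_-(x^{*})\le 0\le\phi^{\,\prime}_+(x^{*})$, making $x^{*}$ a global minimizer. In this monotone case one simply takes $(\phi_+,\phi_-)=(\phi,0)$ if $\phi$ is non-decreasing and $(\phi_+,\phi_-)=(0,\phi)$ if it is non-increasing, for which all the assertions---convexity, the stated monotonicities, and the $L$-Lipschitz bound when $\phi$ is $L$-Lipschitz---are immediate.

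For the main case, let $x^{*}$ be a global minimizer of $\phi$ and set
$$\phi_+(x)\,\triangleq\,\phi\!\left(\max(x,x^{*})\right)-\phi(x^{*}),\qquad\phi_-(x)\,\triangleq\,\phi\!\left(\min(x,x^{*})\right).$$
Because $\{\max(x,x^{*}),\min(x,x^{*})\}=\{x,x^{*}\}$ for every $x$, adding these recovers $\phi_+(x)+\phi_-(x)=\phi(x)$. The function $\phi_+$ is the constant $0$ on $(-\infty,x^{*}]$ and the convex shift $\phi-\phi(x^{*})$ on $[x^{*},\infty)$, the two branches agreeing at $x^{*}$ with left slope $0$ and right slope $\phi^{\,\prime}_+(x^{*})\ge 0$ (nonnegative because $x^{*}$ minimizes $\phi$); hence $\phi_+$ is convex, and it is non-decreasing since $\phi$ is non-decreasing on $[x^{*},\infty)$. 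Symmetrically, $\phi_-$ coincides with the restriction of $\phi$ to $(-\infty,x^{*}]$ (convex and non-increasing there) followed by the constant $\phi(x^{*})$ on $[x^{*},\infty)$, so $\phi_-$ is convex and non-increasing. For the ``moreover'' clause, when $\phi$ is $L$-Lipschitz the only estimate for $\phi_+$ that is not trivial is the case $x^1\le x^{*}\le x^2$, where $|\phi_+(x^2)-\phi_+(x^1)|=|\phi(x^2)-\phi(x^{*})|\le L\,|x^2-x^{*}|\le L\,|x^2-x^1|$; the identical estimate with $x^1$ in place of $x^2$ handles $\phi_-$. Hence both decomposed functions have Lipschitz constant $L$.

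These verifications are routine; the only point that genuinely needs to be spelled out---and the closest thing to an obstacle---is the convexity of the two glued functions at $x^{*}$, which rests precisely on the first-order optimality condition $\phi^{\,\prime}_-(x^{*})\le 0\le\phi^{\,\prime}_+(x^{*})$ at a minimizer of a convex function. If one prefers to avoid the case split, a uniform alternative is to put $\psi_+\triangleq\max(\phi^{\,\prime}_+,0)$ and $\psi_-\triangleq\max(-\phi^{\,\prime}_+,0)$ and define $\phi_+(x)\triangleq\int_{x_0}^{x}\psi_+(t)\,dt$ and $\phi_-(x)\triangleq\phi(x_0)-\int_{x_0}^{x}\psi_-(t)\,dt$ for a fixed $x_0$: then $\phi_++\phi_-=\phi$ by absolute continuity of $\phi$, the monotonicity of $\psi_\pm$ makes each of $\phi_\pm$ convex, the signs of $\psi_\pm$ give the required monotonicities, and $0\le\psi_\pm\le L$ yields the Lipschitz bound in one line.
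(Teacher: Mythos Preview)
Your proof is correct. The paper itself does not prove this lemma: it simply cites \cite[Lemma~3]{CuiPangSen18} and places the $\Box$ immediately after the statement. Your explicit construction---splitting $\phi$ at a global minimizer $x^{*}$ via $\phi_+(x)=\phi(\max(x,x^{*}))-\phi(x^{*})$ and $\phi_-(x)=\phi(\min(x,x^{*}))$, with the monotone case handled separately---is exactly the kind of argument the cited reference contains, and your alternative integral construction $\phi_\pm(x)=\int_{x_0}^{x}\max(\pm\phi'_+,0)$ is the standard unified version. Both routes are sound; the only place where care is genuinely needed, as you note, is the convexity at the gluing point, and you correctly pin this on the optimality condition $\phi'_-(x^{*})\le 0\le\phi'_+(x^{*})$.
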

	
	Applying the above lemma to the function $h(\,\bullet\,;\zbld)$, it follows that
	there exist a univariate convex non-decreasing function $h^\uparrow(\,\bullet\,;\zbld)$ and a univariate convex non-increasing
	function $h^\downarrow(\,\bullet\,;\zbld)$, both of which are easy to construct from $h(\,\bullet\,;\zbld)$,
	such that the convex loss function $h(\,\bullet\,;\zbld)$ in \eqref{eq: cvx composite dc} can be decomposed as
	\[
	h(t;\zbld) \, = \, h^\uparrow(t; \zbld) + h^\downarrow(t; \zbld),\quad \forall \ t \, \in \, \mathbb{R}.
	\]
	Moreover, if $h(\,\bullet\,;\zbld)$ is Lipschitz continuous (see Assumption~4.1(b)), then so are $h^{\updownarrow}(\,\bullet\,;\zbld)$
	with the same Lipschitz constant.
	Based on the above decomposition of the latter function, we introduce the following notation for any
	given $\bar{x}$, $x$, and $y$ in $\mathbb{R}^p$ and a nonnegative scalar $\varepsilon$:
	\[\left\{\begin{array}{ll}
	r_{\bar{x};\varepsilon}^{\uparrow}(y,x;\omega) \,\triangleq \, h^\uparrow \left (\, f(y;\xi) - \displaystyle\max_{j \in {\cal A}_{g;\varepsilon}(\bar{x};\xi)}
	\left[\, \underbrace{g_j(x;\xi)+(y-x)^{\top} \nabla_x g_j(x;\xi)}_{\mbox{\small linearization of $g_j$ at $x$ evaluated at $y$}}\,\right];\,\zbld \,\right),\\[0.35in]
	r_{\bar{x};\varepsilon}^{\downarrow}(y, x;\omega) \,\triangleq \, h^\downarrow \left(\, \displaystyle\max_{j \in {\cal A}_f(\bar{x};\xi)}
	\left[ \, \underbrace{f_j(x;\xi) +(y-x) ^{\top} \nabla_x f_j(x; \xi)}_{\mbox{\small linearization of $f_j$ at $x$ evaluated at $y$}}\,\right]- g(y;\xi);\,\zbld \, \right) .
	\end{array}\right.
	\]
	We further denote
	\begin{equation}\label{def:R}
	{R}_{\bar{x};\varepsilon}^{\,\updownarrow}(y,x) \,\triangleq\, \E_{\wt{\omega}} \left[\, r_{\bar{x};\varepsilon}^{\updownarrow}(y, x;\wt{\omega})\,\right]
	\epc \mbox{and}\epc
	{R}_{N;\bar{x};\varepsilon}^{\,\updownarrow}(y,x)\triangleq \displaystyle{
		\frac{1}{N}
	} \displaystyle{
		\sum_{n=1}^N
	} r_{\bar{x};\varepsilon}^{\updownarrow}(y, x;\omega^n),
	\end{equation}
	and their corresponding sum as
	\begin{equation}\label{def: R sum}
	R_{\bar{x};\varepsilon}(y,x) \, \triangleq \, R_{\bar{x};\varepsilon}^\uparrow(y,x) + R_{\bar{x};\varepsilon}^\downarrow(y,x), \epc
	{R}_{N;\bar{x};\varepsilon}(y,x) \, \triangleq \, {R}_{N;\bar{x};\varepsilon}^\uparrow(y,x) + R_{N;\bar{x};\varepsilon}^\downarrow(y,x),
	\end{equation}
	where we assume all the expectations are finite.
	When $\varepsilon = 0$, we will write $r_{\bar{x}}^{\updownarrow}(y, x;\omega)$, $R_{\bar{x}}^{\,\updownarrow}(y,x)$ and
	$R_{N;\bar{x}}^{\,\updownarrow}(y,x)$ for $r_{\bar{x};\varepsilon}^{\uparrow}(y, x;\omega)$, $R_{\bar{x};0}^{\,\updownarrow}(y,x)$
	and $R_{N;\bar{x};0}^{\,\updownarrow}(y,x)$, respectively.  Notice that $R_x(x,x) = {\cal M}(x)$ and $R_{N;x}(x,x) = {\cal M}_N(x)$ for all $x \in X$.
	Furthermore, for a piecewise affine $m(\bullet;\xi)$ given by (\ref{eq:dcp}) where each $f_j(\bullet;\xi)$ and $g_j(\bullet;\xi)$ are affine as
	in the piecewise affine regression problem, we have
	\[ \left\{ \begin{array}{lll}
	r_{\bar{x};\varepsilon}^{\uparrow}(y,x;\omega) & = & h^{\uparrow} \left(\, f(y;\xi) - \displaystyle{
		\max_{j \in {\cal A}_{g;\varepsilon}(\bar{x};\xi)}
	} \, g_j(y;\xi) \, \right) \\ [0.2in]
	r_{\bar{x};\varepsilon}^{\downarrow}(y,x;\omega) & = & h^{\downarrow} \left(\, \displaystyle{
		\max_{j \in {\cal A}_{f;\varepsilon}(\bar{x};\xi)}
	} \, f_j(y;\xi) - g(y;\xi) \, \right)
	\end{array} \right\} \epc \forall \, x,
	\]
	so that $R_y(y,x) = {\cal M}(y)$ and $R_{N;y}(y,x) = {\cal M}_N(y)$ for all $x$ and $y$ in $X$.  Some of the technical challenges mentioned
	before in the analysis of the problems (\ref{eq:population composite saa}) and (\ref{eq:empirical composite saa}) are embodied in the expect-value
	function  $R_{\bar{x};\varepsilon}(y,x)$ and its sampled approximation $R_{N;\bar{x};\varepsilon}(y,x)$, which are the main conduits employed
	in the analysis.  Namely, the index sets $A_{f/g;\varepsilon}(x;\xi)$
	are varying with the random realization $\xi$ that affects the pointwise maximum selection of the linearizations of $f_j$ and $g_j$; upon taking expectations of
	the random functionals $r_{\bar{x};\varepsilon}^{\updownarrow}(y,x;\omega)$, the behavior of $R_{\bar{x};\varepsilon}(y,x)$ is difficult to pinpoint, which
	relies on a good understanding of the variations of these random index sets; see Lemmas~\ref{lm:set inclusion} and \ref{lm:epsilon and zero}.
	
	\gap
	
	The following lemma provides a key characterization of a
	d-stationary point of problem \eqref{eq:empirical composite saa}.  Specifically, (\ref{eq:d-stationary characterization}) characterizes
	such a point as an optimal solution of a (nonconvex) minimization problem defined by the given point, which is equivalent to finitely many convex programs
	(\ref{eq:indexed d-stationarity}) as demonstrated in the proof.
	
	\begin{lemma}\label{lemma:d-stat}\rm
		The point $\bar{x} \in X$ is d-stationary for problem \eqref{eq:empirical composite saa} if and only if
		\begin{equation}\label{eq:d-stationary characterization}
		\bar{x} \, \in \, \displaystyle\operatornamewithlimits{argmin}_{x\in X}\;
		R_{N;\bar{x}}(x,\bar{x})
		\end{equation}
		Thus, for all $x \in X$,
		\begin{equation} \label{eq:d-stationary consequence}
		R_{N;\bar{x}}(x,\bar{x}) \, \geq \, R_{N;\bar{x}}(\bar{x},\bar{x}) \, = \, {\cal M}_N(\bar{x}).
		\end{equation}
	\end{lemma}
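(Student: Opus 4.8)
The plan is to turn the d-stationarity condition for $\mathcal{M}_N$, which is phrased through the directional derivative, into a global-minimization property over $X$ of the surrogate $y\mapsto R_{N;\bar x}(y,\bar x)$. Two facts do the work: (i) this surrogate is directionally differentiable at $\bar x$ in $y$ with the same directional derivative there as $\mathcal{M}_N$; and (ii) $y\mapsto R_{N;\bar x}(y,\bar x)$ is a pointwise minimum of finitely many \emph{convex} functions of $y$, each equal to $\mathcal{M}_N(\bar x)$ at $y=\bar x$. Granting (i)--(ii), the ``finitely many convex programs'' of the statement are the minimizations of those convex functions over $X$, and the two displayed assertions drop out.

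For (i): use the decomposition $h(\,\bullet\,;\zbld)=h^\uparrow(\,\bullet\,;\zbld)+h^\downarrow(\,\bullet\,;\zbld)$ from Lemma~\ref{lemma:cvx decomp} together with the chain rule for the directional derivative of a convex (hence locally Lipschitz and directionally differentiable) function composed with a directionally differentiable map. Fixing $n$ and a direction $v$, put $\phi_n(y)\triangleq f(y;\xi^n)-\max_{j\in\mathcal{A}_g(\bar x;\xi^n)}\bigl[g_j(\bar x;\xi^n)+(y-\bar x)^{\top}\nabla_x g_j(\bar x;\xi^n)\bigr]$ and $\psi_n(y)\triangleq \max_{j\in\mathcal{A}_f(\bar x;\xi^n)}\bigl[f_j(\bar x;\xi^n)+(y-\bar x)^{\top}\nabla_x f_j(\bar x;\xi^n)\bigr]-g(y;\xi^n)$, so that $r^\uparrow_{\bar x}(y,\bar x;\omega^n)=h^\uparrow(\phi_n(y);\zbld^n)$ and $r^\downarrow_{\bar x}(y,\bar x;\omega^n)=h^\downarrow(\psi_n(y);\zbld^n)$. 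Since $g_j(\bar x;\xi^n)=g(\bar x;\xi^n)$ on $\mathcal{A}_g(\bar x;\xi^n)$ and $f_j(\bar x;\xi^n)=f(\bar x;\xi^n)$ on $\mathcal{A}_f(\bar x;\xi^n)$, one has $\phi_n(\bar x)=\psi_n(\bar x)=m(\bar x;\xi^n)$; and using $f'(\bar x;v;\xi)=\max_{j\in\mathcal{A}_f(\bar x;\xi)}\nabla_x f_j(\bar x;\xi)^{\top}v$ (similarly for $g$) one gets $\phi_n'(\bar x;v)=\psi_n'(\bar x;v)=m'(\bar x;v;\xi^n)$. The chain rule then gives $(h^\uparrow\circ\phi_n)'(\bar x;v)+(h^\downarrow\circ\psi_n)'(\bar x;v)=\mathcal{L}'(\bar x;v;\omega^n)$, where the equality also uses $h=h^\uparrow+h^\downarrow$ and linearity of the directional derivative. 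Averaging over $n$ shows $R_{N;\bar x}(\,\bullet\,,\bar x)$ is directionally differentiable at $\bar x$ with the same directional derivatives as $\mathcal{M}_N$; as a by-product, $R_{N;\bar x}(\bar x,\bar x)=\mathcal{M}_N(\bar x)$.

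For (ii): exploit the monotonicity of $h^{\updownarrow}$. Since $\phi_n(y)=\min_{j\in\mathcal{A}_g(\bar x;\xi^n)}\bigl[f(y;\xi^n)-g_j(\bar x;\xi^n)-(y-\bar x)^{\top}\nabla_x g_j(\bar x;\xi^n)\bigr]$ is a pointwise minimum of convex functions of $y$, and $h^\uparrow(\,\bullet\,;\zbld)$ is convex and non-decreasing (so it commutes with $\min$, and a non-decreasing convex function of a convex function is convex), $r^\uparrow_{\bar x}(\,\bullet\,,\bar x;\omega^n)$ is a pointwise minimum over $j\in\mathcal{A}_g(\bar x;\xi^n)$ of convex functions. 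Dually, $\psi_n(y)=\max_{j\in\mathcal{A}_f(\bar x;\xi^n)}\bigl[f_j(\bar x;\xi^n)+(y-\bar x)^{\top}\nabla_x f_j(\bar x;\xi^n)-g(y;\xi^n)\bigr]$ is a pointwise maximum of concave functions of $y$, and since $h^\downarrow(\,\bullet\,;\zbld)$ is convex and non-increasing (so it turns $\max$ into $\min$, and a non-increasing convex function of a concave function is convex), $r^\downarrow_{\bar x}(\,\bullet\,,\bar x;\omega^n)$ is a pointwise minimum over $j\in\mathcal{A}_f(\bar x;\xi^n)$ of convex functions. Summing over $n$ and using $\min_a u_a+\min_b v_b=\min_{a,b}(u_a+v_b)$, one obtains $R_{N;\bar x}(\,\bullet\,,\bar x)=\min_{\iota\in\mathcal{I}}\Phi_\iota$, where $\mathcal{I}\triangleq\prod_{n=1}^{N}\bigl[\mathcal{A}_g(\bar x;\xi^n)\times\mathcal{A}_f(\bar x;\xi^n)\bigr]$ is a finite index set, each $\Phi_\iota$ is convex on $\mathbb{R}^p$, and (again by the argmax property) $\Phi_\iota(\bar x)=\mathcal{M}_N(\bar x)$ for every $\iota$; minimizing each $\Phi_\iota$ over $X$ is exactly the system \eqref{eq:indexed d-stationarity}.

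To conclude: all $\Phi_\iota$ being active at $\bar x$, the min-rule gives $\bigl(\min_\iota\Phi_\iota\bigr)'(\bar x;v)=\min_\iota\Phi_\iota'(\bar x;v)$, which equals $\mathcal{M}_N'(\bar x;v)$ by (i). Moreover, $\bar x\in\operatornamewithlimits{argmin}_{x\in X}R_{N;\bar x}(x,\bar x)$ iff $\min_\iota\Phi_\iota(x)\ge\mathcal{M}_N(\bar x)$ for all $x\in X$, iff $\bar x\in\operatornamewithlimits{argmin}_{x\in X}\Phi_\iota(x)$ for \emph{every} $\iota$ (the ``$\Leftarrow$'' using that all $\Phi_\iota(\bar x)$ coincide). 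By convexity of each $\Phi_\iota$ and of $X$, this is equivalent to $\Phi_\iota'(\bar x;x-\bar x)\ge 0$ for all $x\in X$ and all $\iota$, i.e.\ to $\min_\iota\Phi_\iota'(\bar x;x-\bar x)=\mathcal{M}_N'(\bar x;x-\bar x)\ge 0$ for all $x\in X$, which is precisely d-stationarity of $\bar x$ for \eqref{eq:empirical composite saa}; this gives \eqref{eq:d-stationary characterization}, and \eqref{eq:d-stationary consequence} then follows from $R_{N;\bar x}(\bar x,\bar x)=\mathcal{M}_N(\bar x)$. The step that needs the most care is the joint bookkeeping in (i)--(ii): one must verify that passing to the active index sets $\mathcal{A}_f(\bar x;\xi^n),\mathcal{A}_g(\bar x;\xi^n)$ and composing with $h^\uparrow,h^\downarrow$ reproduces $\mathcal{L}'(\bar x;v;\omega^n)$ exactly, and must correctly combine the two composition facts (non-decreasing convex $\circ$ convex $=$ convex; non-increasing convex $\circ$ concave $=$ convex) with the $\min$/$\max$ interchanges under $h^{\uparrow}$ and $h^{\downarrow}$.
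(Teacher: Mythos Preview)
Your proof is correct and follows essentially the same route as the paper: the convex functions $\Phi_\iota$ you introduce are precisely the paper's $\widehat{\mathcal{M}}_{N;J_1,J_2}(\,\bullet\,,\bar x)$, and the equivalence chain (d-stationary $\Leftrightarrow$ $\bar x$ minimizes every $\Phi_\iota$ over $X$ $\Leftrightarrow$ $\bar x$ minimizes $\min_\iota\Phi_\iota=R_{N;\bar x}(\,\bullet\,,\bar x)$) is identical. The only difference is that the paper imports the first equivalence from \cite[Lemma~5]{CuiPangSen18}, whereas you prove it directly via the directional-derivative computation in your step~(i); your argument is therefore self-contained but otherwise the same.
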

	
	\begin{proof}
		It is known from \cite[Lemma 5]{CuiPangSen18} that $\bar{x} \in X$ is d-stationary for problem \eqref{eq:empirical composite saa}
		if and only if $\bar{x}$ solves the problem
		\begin{equation} \label{eq:indexed d-stationarity}
		\begin{array}{ll}
		\displaystyle\operatornamewithlimits{minimize}_{x\in X} &
		\widehat{\mathcal{M}}_{N;J_1,J_2}(x,\bar{x}) \,\triangleq \\ [0.1in]	
		& \displaystyle\frac{1}{N}\displaystyle\sum_{n=1}^N \left[
		\begin{array}{l}
		h^\uparrow \left( \, f(x;\xi^{\,n}) - \left[ \, g_{j_{2,n}}(\bar{x};\xi^{\,n}) + \nabla_x g_{j_{2,n}}(\bar{x};\xi^{\,n})^{\top}(x-\bar{x}) \, \right];\zbld^{n}\, \right)
		+ \\ [0.1in]
		h^\downarrow\left( \, \left[ \, f_{j_{1,n}}(\bar{x};\xi^{\,n})  + \nabla_x f_{j_{1,n}}(\bar{x};\xi^{\,n})^{\top}(x-\bar{x}) \, \right] - g(x;\xi^{\,n});\zbld^{n}\,\right)\,
		\end{array} \right]
		\end{array}
		\end{equation}
		\[
		\begin{array}{lll}
		\mbox{for any } (J_1, J_2) & \in & \mathcal{A}(\bar{x};\xi^n) \,\triangleq \, \displaystyle{
			\prod_{n=1}^N
		} \, \mathcal{A}_{f}(\bar{x};\xi^{\,n}) \, \times \, \mathcal{A}_{g}(\bar{x};\xi^{\,n}) \\ [0.25in]
		& = & \left\{ \, (j_{1,n}, j_{2,n})_{n=1}^N \, \mid \,
		j_{1,n} \in \mathcal{A}_{f}(\bar{x};\xi^{\,n}), \ j_{2,n} \, \in \, \mathcal{A}_{g}(\bar{x};\xi^{\,n}) \epc \forall \, n \, \right\}.
		\end{array}
		\]
		Therefore, if the condition \eqref{eq:d-stationary characterization} holds, then for any $x\in X$ and any pair $(J_1,J_2)$ satisfying the above inclusion,
		\[
		\widehat{\mathcal{M}}_{N;J_1,J_2}(\bar{x},\bar{x}) = R_{N;\bar{x}}^{\,\uparrow}(\bar{x},\bar{x}) +
		R_{N;\bar{x}}^{\,\downarrow}(\bar{x},\bar{x}) \, \leq \, R_{N;\bar{x}}^{\,\uparrow}(x,\bar{x}) + R_{N;\bar{x}}^{\,\downarrow}(x,\bar{x})
		\, \leq \, \widehat{\mathcal{M}}_{N;J_1,J_2}(x,\bar{x}),
		\]
		showing that $\bar{x}$ is a d-stationary point for  problem \eqref{eq:empirical composite saa}.
		Conversely, if $\bar{x}$ is a d-stationary point, then for all $(J_1,J_2) \in  \mathcal{A}(\bar{x};\xi^n)$,
		\[
		R_{N;\bar{x}}^{\,\uparrow}(\bar{x},\bar{x}) +
		R_{N;\bar{x}}^{\,\downarrow}(\bar{x},\bar{x}) = \widehat{\mathcal{M}}_{N;J_1,J_2}(\bar{x},\bar{x}) \leq \widehat{\mathcal{M}}_{N;J_1,J_2}(x,\bar{x}),
		\quad \forall\; x\in X,
		\]
		which yields
		\[
		R_{N;\bar{x}}^{\,\uparrow}(\bar{x},\bar{x}) +
		R_{N;\bar{x}}^{\,\downarrow}(\bar{x},\bar{x}) \leq  \displaystyle\min_{(J_1, J_2)  \in  \mathcal{A}(\bar{x};\xi^n)}\widehat{\mathcal{M}}_{N;J_1,J_2}(x,\bar{x})
		= \, R_{N;\bar{x}}^{\,\uparrow}(x,\bar{x}) + R_{N;\bar{x}}^{\,\downarrow}(x,\bar{x}), \epc \forall\; x\in X.
		\]
		This completes the proof of this lemma.
	\end{proof}
	
	Notice that each minimization problem (\ref{eq:indexed d-stationarity}) is a convex program in $x$, confirming that d-stationarity of (\ref{eq:empirical composite saa})
	can be characterized by finitely many convex programs.   This is in contrast to d-stationarity of the population problem (\ref{eq:population composite saa}) which does not
	seem to have a convex programming characterization.  The discussion here extends to the
	minimization problems in the following definition of composite $\varepsilon$-strong d-stationary points that is motivated by the above lemma.
	
	\begin{definition} \label{def:epsilon-stationarity} \rm
		Let $\varepsilon>0$ be a given scalar.  The point $\bar{x}\in \mathbb{R}^p$ is called a {\sl composite $\varepsilon$-strong d-stationary point} of
		problem \eqref{eq:empirical composite saa} if
		\[
		\bar{x} \, \in \, \displaystyle\operatornamewithlimits{argmin}_{x\in X}\;
		R_{N;\bar{x};\varepsilon}(x,\bar{x}).
		\]
	\end{definition}
	
	\begin{remark}\label{remark:strong}\rm
		We remark that the above definition	of the composite $\varepsilon$-strong d-stationarity at $\bar{x}$ is equivalent to
		\begin{equation}\label{defn: new strong d}
		\mathcal{M}_N(\bar{x}) \, \leq \, R_{N;\bar{x};\varepsilon}(x,\bar{x}),
		\end{equation}
		which reduces to (\ref{eq:d-stationary consequence}) when $\varepsilon = 0$.  This is because
		\[\begin{array}{ll}
		R_{N;\bar{x};\varepsilon}(\bar{x},\bar{x}) \, = \, R_{N;\bar{x};\varepsilon}^{\,\uparrow}(\bar{x},\bar{x}) +
		R_{N;\bar{x};\varepsilon}^{\,\downarrow}(\bar{x},\bar{x}) \\[0.15in]
		= \, \displaystyle\frac{1}{N}\sum_{n=1}^N \left[\, h^\uparrow \left( \, f(\bar{x};\xi^n) -
		\, \displaystyle\max_{j \in {\cal A}_{g;\varepsilon}(\bar{x}; \,\xi)} \,g_j(\bar{x};\xi^n);\,\zbld_n \, \right) + h^\downarrow \left(\,
		\displaystyle\max_{j \in {\cal A}_{f;\varepsilon}(\bar{x}; \,\xi^n)}  f_j(\bar{x};\xi^n) - g(\bar{x}, \xi^n);\,\zbld_n  \right)\,\right]\\[0.15in]
		= \, \displaystyle\frac{1}{N}\sum_{n=1}^N \left[\, h^\uparrow \left( \, f(\bar{x};\xi^n) -
		\, \displaystyle\max_{j \in {\cal A}_{g}(\bar{x}; \,\xi)} \,g_j(\bar{x};\xi^n);\,\zbld_n \, \right) + h^\downarrow \left(\,
		\displaystyle\max_{j \in {\cal A}_{f}(\bar{x}; \,\xi^n)}  f_j(\bar{x};\xi^n) - g(\bar{x}, \xi^n);\,\zbld_n  \right)\,\right] \\[0.15in]
		= \, \displaystyle\frac{1}{N}\sum_{n=1}^N \left[\, h^\uparrow \left( \, f(\bar{x};\xi^n) -
		\, g(\bar{x};\xi^n);\,\zbld_n \, \right) + h^\downarrow \left(\,
		f(\bar{x};\xi^n) - g(\bar{x}, \xi^n);\,\zbld_n  \right)\,\right] = \mathcal{M}_N(\bar{x}).
		\end{array}
		\]
		Following similar notation and arguments as in the proof of Lemma \ref{lemma:d-stat} which pertains to $\varepsilon = 0$,
		we can alternatively write \eqref{defn: new strong d} as
		\begin{equation*}
		\calM_N(\bar{x}) \leq \widehat{\mathcal{M}}_{N;J_1,J_2}(x,\bar{x}), 
		\quad \forall \; x\in X, \epc \forall\;(J_1, J_2) \, \in \, \displaystyle{
			\prod_{n=1}^N
		} \, \mathcal{A}_{f;\varepsilon}(\bar{x};\xi^{\,n}) \, \times \, \mathcal{A}_{g;\varepsilon}(\bar{x};\xi^{\,n}),
		%
		\end{equation*}
		the latter being the definition in \cite{LuZhouSun18} of an $\varepsilon$-strong d-stationarity for the program \eqref{opt:dc}.
		Therefore, our definition of composite $\varepsilon$-strong d-stationarity for the composite difference-max program \eqref{eq:empirical composite saa}
		is a generalization of $\varepsilon$-strong d-stationarity for a structured difference-of-convex program introduced in the cited reference.
		\hfill $\Box$
	\end{remark}
	
	Comparing Lemma \ref{lemma:d-stat} and Definition \ref{def:epsilon-stationarity},
	one can obviously see that the composite $\varepsilon$-strong d-stationarity implies the d-stationarity of that point since the former concept
	needs to satisfy additional conditions given by the indices in the  $\varepsilon$-argmax set.  In fact, the latter property is a necessary condition for the local
	optimality of the vector $\bar{x}$, while the former is necessary only for the global optimality of $\bar{x}$.  Further
	connections of a composite $\varepsilon$-strong d-stationary solution and a d-stationary solution are presented in Proposition~\ref{pr:epsilon and zero} .  First we establish a lemma that allows us prove one such connection.
	
	\begin{lemma} \label{lm:epsilon and zero} \rm
		For every pair $(\bar{x},\xi) \in X \times \Omega$, 
		a scalar $\bar{\varepsilon} > 0$ 
		exists such that for all $\varepsilon \in [ \, 0, \, \bar{\varepsilon} \, ]$, we have
		$\mathcal{A}_{f;\varepsilon}(\bar{x};\xi) = \mathcal{A}_{f}(\bar{x};\xi)$ and
		$\mathcal{A}_{g;\varepsilon}(\bar{x};\xi) = \mathcal{A}_{g}(\bar{x};\xi)$. 
	\end{lemma}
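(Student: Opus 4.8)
The plan is to observe that, with $(\bar x,\xi)$ held fixed, $\{f_j(\bar x;\xi)\}_{j=1}^{k_f}$ and $\{g_j(\bar x;\xi)\}_{j=1}^{k_g}$ are merely two finite lists of real numbers, so the $\varepsilon$-argmax sets stabilize for all sufficiently small $\varepsilon$. First I would record the trivial half: for every $\varepsilon\ge 0$ the inclusions $\mathcal{A}_{f}(\bar x;\xi)\subseteq\mathcal{A}_{f;\varepsilon}(\bar x;\xi)$ and $\mathcal{A}_{g}(\bar x;\xi)\subseteq\mathcal{A}_{g;\varepsilon}(\bar x;\xi)$ hold directly from the definitions, since an index attaining the maximum is within $\varepsilon$ of it. Hence only the reverse inclusions require a threshold on $\varepsilon$.

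Next I would introduce the separation gaps. Put $M_f:=\max_{1\le j\le k_f}f_j(\bar x;\xi)$ and
\[
\gamma_f \;:=\; M_f - \max\bigl\{\, f_j(\bar x;\xi) \ :\ 1\le j\le k_f,\ j\notin\mathcal{A}_{f}(\bar x;\xi)\,\bigr\},
\]
with the convention that the maximum over the empty index set is $-\infty$, so that $\gamma_f=+\infty$ exactly in the degenerate case noted after \eqref{def:active} in which every index is a maximizer (and the lemma is then immediate). Since $\{1,\dots,k_f\}\setminus\mathcal{A}_{f}(\bar x;\xi)$ is finite and each of its members satisfies $f_j(\bar x;\xi)<M_f$ strictly, we have $\gamma_f>0$. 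Define $\gamma_g$ analogously, and set $\bar\varepsilon$ to be any positive number with $\bar\varepsilon<\min(\gamma_f,\gamma_g)$, e.g. $\bar\varepsilon:=\tfrac12\min(\gamma_f,\gamma_g)$, read as $\bar\varepsilon:=1$ if both gaps are infinite.

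Finally I would verify the conclusion for this $\bar\varepsilon$. Fix any $\varepsilon\in[\,0,\bar\varepsilon\,]$, so that $\varepsilon<\gamma_f$. If $j\notin\mathcal{A}_{f}(\bar x;\xi)$ then $f_j(\bar x;\xi)\le M_f-\gamma_f<M_f-\varepsilon$, hence $j\notin\mathcal{A}_{f;\varepsilon}(\bar x;\xi)$; equivalently $\mathcal{A}_{f;\varepsilon}(\bar x;\xi)\subseteq\mathcal{A}_{f}(\bar x;\xi)$, which combined with the inclusion from the first step yields equality. Repeating the argument with $g$, $M_g$, $\gamma_g$ in place of $f$, $M_f$, $\gamma_f$ gives $\mathcal{A}_{g;\varepsilon}(\bar x;\xi)=\mathcal{A}_{g}(\bar x;\xi)$. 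There is essentially no obstacle here: this is a pointwise-in-$(\bar x,\xi)$ finiteness fact proved by a one-line gap estimate. The only subtlety worth flagging is that $\bar\varepsilon$ genuinely depends on $(\bar x,\xi)$ through the gaps $\gamma_f,\gamma_g$, which is precisely why upgrading to a statement uniform in $\xi$ requires the separate, Lipschitz-based reasoning of Lemma~\ref{lm:set inclusion}.
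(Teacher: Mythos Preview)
Your proposal is correct and follows essentially the same approach as the paper: both define $\bar\varepsilon$ as (half of) the minimum, over $f$ and $g$, of the gap between the maximal value and the largest strictly sub-maximal value, and both treat the all-ties case separately. Your presentation is marginally cleaner---you argue directly via the contrapositive rather than by contradiction and avoid introducing the order-statistic notation $f_{[1]},\dots,f_{[k_f]}$---but the mathematical content is identical.
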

	
	\noindent {\bf Proof.}  We may assume without loss of generality that 
	neither elements of $\{ f_j(\bar{x};\xi) \}_{j=1}^{k_f}$ are all equal nor the same for $\{ g_j(\bar{x};\xi) \}_{j=1}^{k_f}$.
	Arrange the elements in the families
	$\{ f_j(\bar{x};\xi) \}_{j=1}^{k_f}$ and $\{ g_j(\bar{x};\xi) \}_{j=1}^{k_g}$ in a non-increasing order as follows:
	\[ \begin{array}{l}
	f_{[1]}(\bar{x};\xi) \, = \, \cdots \, = \, f_{[s_f]}(\bar{x};\xi) \, > \, f_{[s_f+1]}(\bar{x};\xi)
	\, \geq \, \cdots \, \geq \, f_{[k_f]}(\bar{x}^N,\xi) \\ [0.1in]
	g_{[1]}(\bar{x};\xi) \, = \, \cdots \, = \, g_{[s_g]}(\bar{x};\xi) \, > \, g_{[s_g+1]}(\bar{x};\xi)
	\, \geq \, \cdots \, \geq \, g_{[k_g]}(\bar{x};\xi) ,
	\end{array} \]
	where the integer $s_f \in \{ 1, \cdots, k_f-1 \}$ and similarly for the integer $s_g$.  Let
	\begin{equation} \label{eq:key epsilon}
	\bar{\varepsilon} \, \triangleq \, \thalf \,
	\left\{ \, f_{[1]}(\bar{x};\xi) - f_{[s_f+1]}(\bar{x};\xi), \, g_{[1]}(\bar{x};\xi) - g_{[s_g+1]}(\bar{x};\xi) \, \right\}.
	\end{equation}
	Let $\varepsilon \in [ \, 0, \, \bar{\varepsilon} \, ]$ and $j \in \mathcal{A}_{f;\varepsilon}(\bar{x};\xi)$.  Suppose
	$f_j(\bar{x};\xi) < f_{[1]}(\bar{x};\xi)$.  Then we must have $f_j(\bar{x};\xi) \leq f_{[s_f+1]}(\bar{x};\xi)$.
	Hence,
	\[ \begin{array}{lll}
	f_{[s_f+1]}(\bar{x};\xi) & \geq &
	f_j(\bar{x};\xi) \, \geq \, f_{[1]}(\bar{x};\xi) - \varepsilon \\ [5pt]
	& \geq & f_{[1]}(\bar{x};\xi) - \thalf \, \left( \, f_{[1]}(\bar{x};\xi) - f_{[s_f+1]}(\bar{x};\xi) \, \right),
	\end{array}
	\]
	which yields $f_{[s_f+1]}(\bar{x};\xi) \geq f_{[1]}(\bar{x};\xi)$.  This is a contradiction.  Thus
	$\mathcal{A}_{f;\varepsilon}(\bar{x};\xi) = \mathcal{A}_f(\bar{x};\xi)$.  Similarly, we can prove
	$\mathcal{A}_{g;\varepsilon}(\bar{x};\xi) = \mathcal{A}_g(\bar{x};\xi)$.
	\hfill $\Box$
	
	\gap
	
	An easy application of the above lemma immediately yields the following result.
	
	\begin{proposition} \label{pr:epsilon and zero} \rm
		For every positive integer $N$, if $\bar{x}^N$ is a d-stationary point of problem \eqref{eq:empirical composite saa}
		corresponding to a given family of ralizations $\{ \, \xi^{\, n} \, \}_{n=1}^N \subset \Xi$, then a scalar
		$\bar{\varepsilon}_N$ exists such that $\bar{x}^N$ is a composite $\varepsilon$-strong d-stationary point of the same problem
		for any $\varepsilon \in [ \, 0, \, \bar{\varepsilon}_N \, ]$.  \hfill $\Box$
	\end{proposition}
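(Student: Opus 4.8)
The plan is to reduce the statement to the fact that, at a fixed point and a fixed realization, sufficiently small values of the argmax tolerance do not enlarge the active-index sets---this is exactly Lemma~\ref{lm:epsilon and zero}. First I would fix the positive integer $N$ and the d-stationary point $\bar{x}^N$ together with its associated family of realizations $\{\xi^{\,n}\}_{n=1}^N \subset \Xi$. For each index $n \in \{1,\ldots,N\}$, applying Lemma~\ref{lm:epsilon and zero} to the pair $(\bar{x}^N,\xi^{\,n}) \in X \times \Omega$ produces a scalar $\bar{\varepsilon}_n > 0$ such that $\mathcal{A}_{f;\varepsilon}(\bar{x}^N;\xi^{\,n}) = \mathcal{A}_f(\bar{x}^N;\xi^{\,n})$ and $\mathcal{A}_{g;\varepsilon}(\bar{x}^N;\xi^{\,n}) = \mathcal{A}_g(\bar{x}^N;\xi^{\,n})$ for every $\varepsilon \in [\,0,\bar{\varepsilon}_n\,]$. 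Since only finitely many realizations are involved, I would then set $\bar{\varepsilon}_N \triangleq \min_{1 \leq n \leq N} \bar{\varepsilon}_n$, which is strictly positive, being the minimum of finitely many positive numbers.

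The second step is the observation that, for every $\varepsilon \in [\,0,\bar{\varepsilon}_N\,]$, the $\varepsilon$-argmax index sets $\mathcal{A}_{f;\varepsilon}(\bar{x}^N;\xi^{\,n})$ and $\mathcal{A}_{g;\varepsilon}(\bar{x}^N;\xi^{\,n})$ coincide with their $\varepsilon = 0$ counterparts for every $n = 1,\ldots,N$. Because these index sets are the only place where the parameter $\varepsilon$ enters the random functionals $r_{\bar{x}^N;\varepsilon}^{\,\updownarrow}(\,\bullet\,,\,\bullet\,;\omega^{\,n})$ appearing in \eqref{def:R}, it follows that $R_{N;\bar{x}^N;\varepsilon}^{\,\updownarrow}(x,\bar{x}^N) = R_{N;\bar{x}^N}^{\,\updownarrow}(x,\bar{x}^N)$ for all $x \in X$, and hence, by \eqref{def: R sum}, $R_{N;\bar{x}^N;\varepsilon}(x,\bar{x}^N) = R_{N;\bar{x}^N}(x,\bar{x}^N)$ for all $x \in X$.

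Finally I would invoke Lemma~\ref{lemma:d-stat}: d-stationarity of $\bar{x}^N$ for \eqref{eq:empirical composite saa} means precisely that $\bar{x}^N \in \operatornamewithlimits{argmin}_{x\in X} R_{N;\bar{x}^N}(x,\bar{x}^N)$. Combining this with the identity $R_{N;\bar{x}^N;\varepsilon}(\,\bullet\,,\bar{x}^N) \equiv R_{N;\bar{x}^N}(\,\bullet\,,\bar{x}^N)$ on $X$ established in the previous step, one obtains $\bar{x}^N \in \operatornamewithlimits{argmin}_{x\in X} R_{N;\bar{x}^N;\varepsilon}(x,\bar{x}^N)$ for every $\varepsilon \in [\,0,\bar{\varepsilon}_N\,]$, which by Definition~\ref{def:epsilon-stationarity} is exactly the assertion that $\bar{x}^N$ is a composite $\varepsilon$-strong d-stationary point of \eqref{eq:empirical composite saa} (for $\varepsilon = 0$ this reads simply as d-stationarity, which holds by hypothesis). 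The one place where a little care is needed---and essentially the only ``obstacle'', such as it is---is that Lemma~\ref{lm:epsilon and zero} is a pointwise statement in $\xi$; the argument therefore genuinely relies on the empirical problem involving only the finite sample $\{\xi^{\,n}\}_{n=1}^N$, since a continuum of realizations would instead demand a uniform-in-$\xi$ positive lower bound on the $\bar{\varepsilon}_n$, which is not available in general.
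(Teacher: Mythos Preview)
Your proof is correct and is exactly the ``easy application of the above lemma'' that the paper alludes to without spelling out: apply Lemma~\ref{lm:epsilon and zero} to each of the finitely many pairs $(\bar{x}^N,\xi^{\,n})$, take $\bar{\varepsilon}_N$ to be the minimum of the resulting $\bar{\varepsilon}_n$, observe that this forces $R_{N;\bar{x}^N;\varepsilon}(\,\bullet\,,\bar{x}^N)=R_{N;\bar{x}^N}(\,\bullet\,,\bar{x}^N)$, and invoke Lemma~\ref{lemma:d-stat} together with Definition~\ref{def:epsilon-stationarity}. Your closing remark about why finiteness of the sample is essential is a nice touch.
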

	
	When $m(\bullet;\xi)$ is piecewise affine,
	the equivalence of composite $\varepsilon$-strong d-stationarity and d-stationarity for small $\varepsilon > 0$ can be
	augmented by a locally minimizing property.   Indeed in this case, by results in \cite{CuiPangHongChang18}, we know that a d-stationary
	point must be locally minimizing; thus the equivalence between d-stationarity, composite $\varepsilon$-strong d-stationary, and locally minimizing.
	The diagram below illustrates these relationships for the problem \eqref{eq:empirical composite saa}.
	\begin{figure}[H]
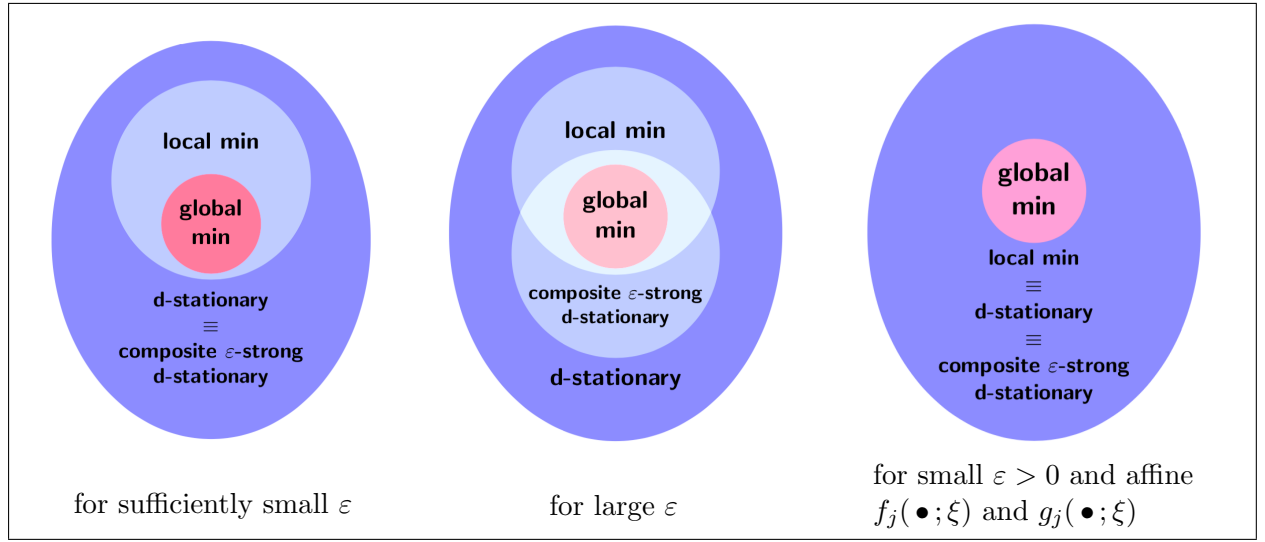

		\begin{center}
			\fbox{
				\begin{minipage}{.30\textwidth}
					\begin{center}
						\includegraphics[width = 0.9\textwidth]{diagram1.pdf}
						\vskip 0.15in
						for sufficiently small $\varepsilon$
					\end{center}
				\end{minipage}
				\begin{minipage}{0.33\textwidth}
					\begin{center}
						\includegraphics[width = 0.9\textwidth]{diagram2.pdf}
						\vskip 0.15in
						for large $\varepsilon$
					\end{center}
				\end{minipage}
				\begin{minipage}{0.33\textwidth}
					\begin{center}
						\includegraphics[width = 0.9\textwidth]{diagram3.pdf}
						
						\begin{tabular}{l}
							for small $\varepsilon > 0$ and affine \\
							$f_j(\,\bullet\,;\xi)$ and $g_j(\,\bullet\,;\xi)$
						\end{tabular}
					\end{center}
				\end{minipage}
			}
		\end{center}
		\caption{\footnotesize Diagram of the relationship between global/local minimizers and (composite $\varepsilon$-strong) d-stationary points.}
	\end{figure}
	
	To close this section, we point out that the computation of a d-stationary point of a  difference-max optimization problem
	can be accomplished by an enhancement \cite{PangRazaviyaynAlvarado16} of the original
	difference-of-convex algorithm (DCA) \cite{LeThiPham05} that makes use of an arbitrary $\varepsilon > 0$.
	The subsequent reference \cite{LuZhouSun18} shows that the so-computed d-stationary solution is actually $\varepsilon$-strong d-stationary.
	The more recent reference \cite{CuiPangSen18} further extends these references to a composite difference-max problem of which
	(\ref{eq:empirical composite saa}) is a special case.
	Thus the analysis in the next section about a d-stationary solution of (\ref{eq:empirical composite saa}) is computationally meaningful.
	This is in contrast to the analysis of minimizers of the problems (\ref{eq:empirical composite saa}) and (\ref{eq:population composite saa})
	that is in general detached from computational tractability.
	
	\section{Consistency of D-stationary Solutions} \label{sec: consistency}
	
	We establish in this section the convergence as $N$ tends to infinity
	of  composite $\varepsilon$-strong d-stationary solutions of (\ref{eq:empirical composite saa}) to a d-stationary solution of the population
	problem (\ref{eq:population composite saa}).  Adding to the uniform Lipschitz continuity
	(\ref{eq:Lipschitz fg}) of the functions $\left\{ f_j \right\}_{j=1}^{k_f}$ and $\left\{ g_j \right\}_{j=1}^{k_g}$,
	we impose the following assumptions.
	
	
	\begin{assumption}\label{ass for ULLN}\rm
		(a1) Both $\mbox{Lip}_f(\xi)$ and $\mbox{Lip}_g(\xi)$ in the inequalities (\ref{eq:Lipschitz fg}) are square integrable and
		$c_0 > 0$ exists such that for all $\xi$ in the probability-one subset $\Xi^{\, 1}$ of $\Xi$,
		$\max\left( \, \mbox{Lip}_f(\xi), \, \mbox{Lip}_g(\xi) \, \right) \leq c_0$.
		
		\gap
		
		(a2) There exist square integrable functions $\mbox{Lip}_{\nabla f}(\xi)$ and $\mbox{Lip}_{\nabla g}(\xi)$ and a probability-one
		subset $\Xi^{\, 1}_{\nabla}$ of $\Xi$ such that for $\xi\in \Xi^{\, 1}_{\nabla}$ and for any $x$ and $y$ in $X$,
		$$
		\left\{\begin{array}{ll}
		\left\|\, \nabla_x f_j(x;\xi) - \nabla_x f_j(y;\xi) \,\right\|_2 \, \leq \, \mbox{Lip}_{\nabla f}(\xi) \,
		\left\|\, x - y \, \right\|_2, \quad \forall\; j = 1, \ldots, k_f,\\[0.15in]
		\left\|\, \nabla_x g_j(x;\xi) - \nabla_x g_j(y;\xi) \, \right\|_2 \, \leq \, \mbox{Lip}_{\nabla g}(\xi) \,
		\left\|\, x - y \, \right\|_2, \quad \forall\; j = 1, \ldots, k_g.
		\end{array}\right.
		$$
		(a3) There exist square integrable functions $C_f(\xi)$ and $C_g(\xi)$ and a probability-one subset $\Xi^{\, 2}_{\nabla}$
		of $\Xi$ such that for all $\xi \in \Xi^{\, 2}_{\nabla}$,
		\[
		\left\{\begin{array}{ll}
		\displaystyle{
			\sup_{x \in X}
		} \, \left\| \, \nabla_x f_j(x;\xi) \,\right\|_2 \, \leq \, C_f(\xi), \quad \forall\; j = 1, \ldots, k_f, \\[0.15in]
		\displaystyle{
			\sup_{x \in X}
		} \, \left\| \, \nabla_x g_j(x;\xi) \, \right\|_2 \, \leq \, C_g(\xi),  \quad \forall\; j = 1, \ldots, k_g.
		\end{array}\right.
		\]
		(b) There exist a square integrable function $\mbox{Lip}_{h}(\zbld)$ and a probability-one subset $\wh{\cal Z}$ of ${\cal Z}$
		such that for all $\zbld \in \wh{\cal Z}$ and for any $t_1$ and $t_2\in \mathbb{R}$,
		\[
		\left|\, h(t_1;\zbld) - h(t_2;\zbld) \,\right| \, \leq \, \mbox{Lip}_h(\zbld) \left|\, t_1 - t_2 \,\right|.
		\]
		We let $\wh{\Xi} \, \triangleq \, \Xi^{\, 1} \, \cap \, \Xi^{\, 1}_{\nabla} \, \cap \, \Xi^{\, 2}_{\nabla}$
		and $\wh{\Omega} \, \triangleq \, \wh{\Xi} \times \wh{\cal Z}$.  Note that $\IP( \wh{\Omega} ) = 1$.
	\end{assumption}
	
	Notice that
	Assumptions~(a2) and (a3) in \ref{ass for ULLN} imply that
	\[
	\E_{\wt{\omega}}\left[ \, \displaystyle{
		\sup_{x \in X}
	} \, \left| \, h^{\updownarrow} \circ ( \, m(x;\wt{\xi});\wt{\zbld} \, ) \, \right| \, \right] \, < \, +\infty.
	\]
	We begin with several lemmas that are essential to the proof of our main result.
	The first one is the classical uniform law of large numbers and its implication on the continuous convergence of random functions.
	
	\begin{lemma}\rm (c.f. \cite[Lemma 3.10]{van2000empirical})\label{lemma: ULLN}
		Let the bivariate function $\mathcal{L} : \mathbb{R}^p \times \Omega \to \mathbb{R}$ be
		such that ${\cal L}(\bullet,\omega)$ is continuous on $\mathbb{R}^p$ for almost all $\omega\in \Omega$.
		Let $X\subseteq\mathbb{R}^p$ be a compact set.  Suppose that
		$\E_{\wt{\omega}} \left[\,\displaystyle\sup_{x \in X}\,|\,\mathcal{L}(x;\wt{\omega})\,|\,\right] < +\infty$.
		Then
		\[
		\displaystyle\lim_{N \rightarrow \infty}\, \sup_{x \in X}\,\left|\,\frac{1}{N}\sum_{n=1}^N\mathcal{L}(x; \omega^{\, n}) -
		\E_\omega \left[\,\mathcal{L}(x; \omega)\,\right]\,\right| \rightarrow 0 \quad \mbox{almost surely}\,.
		\]		
		Moreover, if $\E_{\wt{\omega}} \, {\cal L}(\bullet,\wt{\omega})$
		is continuous on an open set containing $X$, then for any $x \in X$ and any sequence $\left\{x^{N}\right\}\subset X$ converging to $x$,
		it holds that
		\[
		\displaystyle\lim_{N \rightarrow \infty}\,\left|\,\frac{1}{N}\sum_{n = 1}^N\mathcal{L}\left(x^{N} ; \omega^{\, n} \right) -
		\E_{\wt{\omega}} \, [\,\mathcal{L}(x;\wt{\omega})\,]\,\right| = 0 \epc \mbox{almost surely} \, .
		\]
	\end{lemma}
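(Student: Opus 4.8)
The plan is to prove the two assertions in turn: the uniform law of large numbers is the substantive part, and the ``continuous convergence'' statement then follows from it by a one-line triangle inequality. For the first assertion I would run the classical bracketing-by-compactness argument. Fix $\eps > 0$. For $x \in X$ and $\rho > 0$ introduce the upper and lower local envelopes
\[
U_\rho(x;\omega) \, \triangleq \, \sup_{y \in \mathbb{B}_\rho(x) \cap X} \mathcal{L}(y;\omega), \epc
L_\rho(x;\omega) \, \triangleq \, \inf_{y \in \mathbb{B}_\rho(x) \cap X} \mathcal{L}(y;\omega).
\]
Since $\mathcal{L}(\bullet\,;\omega)$ is continuous and $X$ is separable, these suprema and infima may be taken over a fixed countable dense subset of $\mathbb{B}_\rho(x) \cap X$, so $U_\rho$ and $L_\rho$ are measurable in $\omega$; they are also dominated in absolute value by the integrable envelope $\sup_{x \in X} | \mathcal{L}(x;\omega) |$. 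By the continuity of $\mathcal{L}(\bullet\,;\omega)$, for almost every $\omega$ one has $U_\rho(x;\omega) \downarrow \mathcal{L}(x;\omega)$ and $L_\rho(x;\omega) \uparrow \mathcal{L}(x;\omega)$ as $\rho \downarrow 0$; hence by dominated convergence $\E_{\wt{\omega}}[ U_\rho(x;\wt{\omega}) ]$ and $\E_{\wt{\omega}}[ L_\rho(x;\wt{\omega}) ]$ both converge to $\E_{\wt{\omega}}[ \mathcal{L}(x;\wt{\omega}) ]$, so for each $x \in X$ there is a radius $\rho_x > 0$ with $\E_{\wt{\omega}}[ U_{\rho_x}(x;\wt{\omega}) ] - \E_{\wt{\omega}}[ L_{\rho_x}(x;\wt{\omega}) ] < \eps$.

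Next I would invoke compactness of $X$ to extract a finite subcover $X \subseteq \bigcup_{i=1}^m \mathbb{B}_{\rho_{x_i}}(x_i)$ and apply the ordinary strong law of large numbers to each of the finitely many integrable variables $U_{\rho_{x_i}}(x_i;\bullet)$ and $L_{\rho_{x_i}}(x_i;\bullet)$ --- a finite collection, so the exceptional null sets still union to a null set. On the resulting almost-sure event there is $N_0$ such that for all $N \geq N_0$ and all $i$ the sample averages of $U_{\rho_{x_i}}(x_i;\bullet)$ and $L_{\rho_{x_i}}(x_i;\bullet)$ lie within $\eps$ of their means. Given any $x \in X$, pick $i$ with $x \in \mathbb{B}_{\rho_{x_i}}(x_i)$; then $L_{\rho_{x_i}}(x_i;\omega^{\, n}) \leq \mathcal{L}(x;\omega^{\, n}) \leq U_{\rho_{x_i}}(x_i;\omega^{\, n})$ for every $n$, while $\E_{\wt{\omega}}[ L_{\rho_{x_i}}(x_i;\wt{\omega}) ] \leq \E_{\wt{\omega}}[ \mathcal{L}(x;\wt{\omega}) ] \leq \E_{\wt{\omega}}[ U_{\rho_{x_i}}(x_i;\wt{\omega}) ]$; combining these with the bracket width $<\eps$ gives $| \frac{1}{N} \sum_{n=1}^N \mathcal{L}(x;\omega^{\, n}) - \E_{\wt{\omega}}[ \mathcal{L}(x;\wt{\omega}) ] | < 2\eps$ for all $N \geq N_0$. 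Taking the supremum over $x \in X$, then letting $\eps$ run through a sequence decreasing to $0$ and intersecting the corresponding almost-sure events, yields the claimed uniform convergence. The only steps requiring genuine care are the measurability of $U_\rho, L_\rho$ and the dominated-convergence closing-up of the brackets; the remainder is the one-point strong law applied finitely many times, plus the squeeze.

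For the second assertion, let $x \in X$ and $x^N \to x$ with each $x^N \in X$. I would write
\[
\left| \, \frac{1}{N} \sum_{n=1}^N \mathcal{L}(x^N;\omega^{\, n}) - \E_{\wt{\omega}}[ \mathcal{L}(x;\wt{\omega}) ] \, \right|
\, \leq \, \sup_{y \in X} \left| \, \frac{1}{N} \sum_{n=1}^N \mathcal{L}(y;\omega^{\, n}) - \E_{\wt{\omega}}[ \mathcal{L}(y;\wt{\omega}) ] \, \right|
\, + \, \left| \, \E_{\wt{\omega}}[ \mathcal{L}(x^N;\wt{\omega}) ] - \E_{\wt{\omega}}[ \mathcal{L}(x;\wt{\omega}) ] \, \right|.
\]
The first term on the right tends to $0$ almost surely by the uniform law just established (the varying argument $x^N$ being absorbed into the uniform bound), and the second tends to $0$ by the assumed continuity of $\E_{\wt{\omega}}[ \mathcal{L}(\bullet\,;\wt{\omega}) ]$ at $x$ together with $x^N \to x$, the expectations being finite on $X$ by the envelope hypothesis. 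This gives the second claim, which presents no real obstacle beyond that observation.
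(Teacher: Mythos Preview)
Your argument is correct. The paper does not actually prove this lemma: it is stated with a citation to \cite[Lemma~3.10]{van2000empirical} and used as a black box, so there is no ``paper's own proof'' to compare against beyond the reference. What you have written is the standard bracketing-by-compactness proof of the uniform strong law (local envelopes, dominated convergence to close the brackets, finite subcover, pointwise SLLN on finitely many variables, squeeze, then a countable intersection over $\eps_k \downarrow 0$), followed by the expected triangle-inequality deduction of continuous convergence; both parts are sound, and the measurability and domination details you flag are handled correctly.
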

	
	%
	
	
	\begin{lemma}\label{lemma: ULLN 1}\rm
		Suppose that Assumption \ref{ass for ULLN} holds.  Let $X \subseteq \mathbb{R}^{\, p}$ be a compact set. Then for any $\bar{x}\in \mathbb{R}^{\, p}$
		and any $\varepsilon > 0$,
		\begin{equation} \label{eq:uniform LLN L}
		\displaystyle{
			\lim_{N \to +\infty}
		} \, \max\left( \, \begin{array}{l}
		\displaystyle{
			\sup_{x, y \in X} 
		} \, \displaystyle{
		} \, \left| \, R^{\,\uparrow}_{N;\bar{x};\varepsilon}(x,y) - R^{\,\uparrow}_{\bar{x};\varepsilon}(x,y)\, \right|, \\ [0.15in]
		\displaystyle{
			\sup_{x, y \in X} 
		} \, \displaystyle{
		} \, \left| \, R^{\,\downarrow}_{N;\bar{x};\varepsilon^{\prime}}(x,y) - R^{\,\downarrow}_{\bar{x};\varepsilon}(x,y)\, \right|
		\end{array} \, \right) \, = \, 0, \epc \mbox{almost surely}.
		\end{equation}
	\end{lemma}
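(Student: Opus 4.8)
The plan is to read \eqref{eq:uniform LLN L} as two separate applications of the classical uniform strong law of large numbers, Lemma~\ref{lemma: ULLN}, carried out on the compact set $X \times X \subseteq \mathbb{R}^{2p}$, with the pair $(x,y)$ treated as a single decision variable. Fix $\bar{x}$ and $\varepsilon > 0$ and set
\[
\Phi^{\uparrow}(x,y;\omega) \, \triangleq \, r^{\uparrow}_{\bar{x};\varepsilon}(x,y;\omega), \epc
\Phi^{\downarrow}(x,y;\omega) \, \triangleq \, r^{\downarrow}_{\bar{x};\varepsilon}(x,y;\omega),
\]
viewed as random functions of $(x,y) \in \mathbb{R}^{2p}$. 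It suffices to verify, for each of $\Phi^{\uparrow}$ and $\Phi^{\downarrow}$, the two hypotheses of Lemma~\ref{lemma: ULLN}: (i) joint continuity of $(x,y) \mapsto \Phi^{\updownarrow}(x,y;\omega)$ on $\mathbb{R}^{2p}$ for $\omega$ in a probability-one set, and (ii) $\E_{\wt{\omega}}[\,\sup_{(x,y)\in X\times X}|\Phi^{\updownarrow}(x,y;\wt{\omega})|\,] < +\infty$. Granting (i)--(ii), Lemma~\ref{lemma: ULLN} gives $\sup_{x,y\in X}|R^{\uparrow}_{N;\bar{x};\varepsilon}(x,y) - R^{\uparrow}_{\bar{x};\varepsilon}(x,y)| \to 0$ and $\sup_{x,y\in X}|R^{\downarrow}_{N;\bar{x};\varepsilon}(x,y) - R^{\downarrow}_{\bar{x};\varepsilon}(x,y)| \to 0$, each almost surely; intersecting the two probability-one events and taking the maximum yields \eqref{eq:uniform LLN L}.

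For (i), the key observation is that the index set $\mathcal{A}_{g;\varepsilon}(\bar{x};\xi)$ appearing in $r^{\uparrow}_{\bar{x};\varepsilon}$, and the index set $\mathcal{A}_{f}(\bar{x};\xi)$ appearing in $r^{\downarrow}_{\bar{x};\varepsilon}$, are both evaluated at the \emph{fixed} point $\bar{x}$; hence, for a given realization $\omega$, they are fixed finite subsets of $\{1,\dots,k_g\}$ and $\{1,\dots,k_f\}$, so no instability of the argmax-in-$x$ type occurs. Consequently the inner maxima $\max_{j\in\mathcal{A}_{g;\varepsilon}(\bar{x};\xi)}[\,g_j(x;\xi)+(y-x)^{\top}\nabla_x g_j(x;\xi)\,]$ and $\max_{j\in\mathcal{A}_{f}(\bar{x};\xi)}[\,f_j(x;\xi)+(y-x)^{\top}\nabla_x f_j(x;\xi)\,]$ are finite maxima of jointly continuous functions of $(x,y)$---using that each $f_j(\cdot;\xi)$, $g_j(\cdot;\xi)$ and, by Assumption~\ref{ass for ULLN}(a2), each $\nabla_x f_j(\cdot;\xi)$, $\nabla_x g_j(\cdot;\xi)$ is continuous---as are $f(\cdot;\xi)$ and $g(\cdot;\xi)$. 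Since $h^{\uparrow}(\cdot;\zbld)$ and $h^{\downarrow}(\cdot;\zbld)$ are finite convex univariate functions, hence continuous, the compositions $\Phi^{\updownarrow}(\cdot,\cdot;\omega)$ are jointly continuous on $\mathbb{R}^{2p}$ whenever $\xi\in\wh{\Xi}$ and $\zbld\in\wh{\cal Z}$, a set of probability one.

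For (ii), I would dominate $\Phi^{\updownarrow}$ on $X\times X$ by a square-integrable envelope built around the ``base configuration'' $x=y=\bar{x}$. By Lemma~\ref{lemma:cvx decomp}, $h^{\uparrow}(\cdot;\zbld)$ and $h^{\downarrow}(\cdot;\zbld)$ are Lipschitz with the same constant $\mbox{Lip}_h(\zbld)$ as $h(\cdot;\zbld)$. Evaluating the argument of $h^{\uparrow}$ (resp.\ $h^{\downarrow}$) at $x=y=\bar{x}$ gives exactly $m(\bar{x};\xi)$, because every $j\in\mathcal{A}_{f}(\bar{x};\xi)$ attains $f(\bar{x};\xi)$ and $\mathcal{A}_{g;\varepsilon}(\bar{x};\xi)$ contains the true argmax of $g(\cdot;\xi)$ at $\bar{x}$. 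Writing the argument of $h^{\updownarrow}$ at a general $(x,y)$ as $m(\bar{x};\xi)$ plus a deviation, Lipschitz continuity of $h^{\updownarrow}$ reduces the task to bounding that deviation. Using convexity of the $f_j$ and $g_j$ (each linearization lies below the corresponding pointwise maximum), the active-set identity $f_j(\bar{x};\xi)=f(\bar{x};\xi)$ for $j\in\mathcal{A}_{f}(\bar{x};\xi)$ and the relaxed identity $g_j(\bar{x};\xi)\ge g(\bar{x};\xi)-\varepsilon$ for $j\in\mathcal{A}_{g;\varepsilon}(\bar{x};\xi)$, the Lipschitz bounds \eqref{eq:Lipschitz fg} and \eqref{eq:Lipschitz m}, the gradient bound Assumption~\ref{ass for ULLN}(a3), and compactness of $X$, this deviation is bounded over $X\times X$ by a square-integrable function of $\omega$ (the fixed $\varepsilon$ only adds an additive constant). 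Finally $|h^{\updownarrow}(m(\bar{x};\xi);\zbld)|$ is integrable by the displayed consequence of Assumptions~\ref{ass for ULLN}(a2)--(a3) (if $\bar{x}\notin X$, first pass to a reference point of $X$ via \eqref{eq:Lipschitz m}), and the product of $\mbox{Lip}_h(\zbld)$ with a square-integrable function of $\xi$ is integrable by Cauchy--Schwarz; this furnishes the envelope required in (ii).

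The main obstacle is step (ii): while no single estimate is hard, one must route the bound on the argument of $h^{\updownarrow}$ carefully through the interaction of the \emph{fixed} index sets $\mathcal{A}_{g;\varepsilon}(\bar{x};\xi)$ and $\mathcal{A}_{f}(\bar{x};\xi)$, the linearizations of the $f_j$'s and $g_j$'s, and the several families of square-integrable Lipschitz and gradient bounds, exploiting that the reference configuration $x=y=\bar{x}$ collapses the inner maxima to $m(\bar{x};\xi)$. Once the integrable envelope is available, the remainder is a direct appeal to Lemma~\ref{lemma: ULLN} followed by a union of two null events. By contrast, the joint continuity in (i) is essentially automatic precisely because the argmax index sets are frozen at $\bar{x}$; the more delicate variation of these sets with a moving base point is postponed to later results such as Lemma~\ref{lm:epsilon and zero}.
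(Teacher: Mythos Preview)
Your proposal is correct and follows essentially the same route as the paper: reduce to Lemma~\ref{lemma: ULLN} on the compact product $X\times X$, and focus the real work on producing an integrable envelope for $r^{\updownarrow}_{\bar{x};\varepsilon}(\,\cdot\,,\,\cdot\,;\omega)$ by comparing the argument of $h^{\updownarrow}$ to the base value $m(\bar{x};\xi)$ via the Lipschitz and gradient bounds in Assumption~\ref{ass for ULLN}. The paper's derivation of the envelope leans on (a2) (Lipschitz gradients, yielding a $\tfrac{1}{2}\mbox{Lip}_{\nabla g}(\xi)\|y-x\|_2^2$ term) whereas you instead invoke (a3) (the uniform gradient bound $C_g(\xi)$) together with convexity; both are valid and produce an equivalent square-integrable envelope. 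Your explicit check of joint continuity, noting that the index sets are frozen at $\bar{x}$, is a point the paper leaves implicit.
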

	
	\begin{proof}
		To prove this lemma, it suffices to check that
		\begin{equation}\label{lemma: ULLN eq 1}
		\left\{\begin{array}{ll}
		\E_{\wt{\omega}}\left[ \, \displaystyle{
			\sup_{x, y \in X}
		} \,
		h^\uparrow \left( \, f(y;\wt{\xi}) - \displaystyle{
			\max_{j \in {\cal A}_{g;\varepsilon}(\bar{x};\wt{\xi})}
		} \, \left[\, g_j(x;\wt{\xi}) + (y-x)^{\top} \nabla_x g_j(x;\wt{\xi})\,\right];\wt{\zbld} \,\right) \, \right]
		< +\infty, \\[0.25in]
		\E_{\wt{\omega}}\left[ \, \displaystyle{
			\sup_{x, y \in X}
		} \,
		h^\downarrow \left( \, \displaystyle{
			\max_{j \in {\cal A}_{f;\varepsilon}(\bar{x};\wt{\xi})}
		} \, \left[ \, f_j(x;\wt{\xi}) + (y-x)^{\top} \nabla_x f_j(x;\wt{\xi})\,\right]- g(y;\wt{\xi});\wt{\zbld} \,\right)\right]
		\, < \, +\infty
		\end{array}\right.
		\end{equation}
		and then apply Lemma~\ref{lemma: ULLN}.  By Assumption \ref{ass for ULLN}~(a2) and (a3), we have that for all pairs $( \xi,\zbld ) \in \wh{\Omega}$,
		\[
		\begin{array}{l}
		h^\uparrow \left(\, f(y;\xi) -  \displaystyle\max_{j \in {\cal A}_{g;\varepsilon}(\bar{x}; \,\xi)} \left[\, g_j(x;\xi) + (y-x)^{\top} \nabla_x g_j(x; \xi)\,\right];\,\zbld \,\right)
		\\[0.2in]
		\leq \, h^\uparrow \left(\, f(\bar{x};\xi) + \, \mbox{Lip}_f(\xi) \, \| \, y-\bar{x} \, \|_2 -\, \displaystyle{
			\max_{j \in {\cal A}_{g;\varepsilon}(\bar{x}; \,\xi)}
		} \, g_j(y; \xi) + \frac{\mbox{Lip}_{\nabla g}(\xi)\| \, y-x \, \|_2^2}{2};\,\zbld \,\right) \\[0.2in]
		\leq \, h^\uparrow \left(\, f(\bar{x};\xi) + \, \mbox{Lip}_f(\xi) \, \| \, y-\bar{x} \, \|_2 -\, \displaystyle{
			\max_{j \in {\cal A}_{g;\varepsilon}(\bar{x}; \,\xi)}
		} \, g_j(\bar{x}; \xi) +\mbox{Lip}_g(\xi)\| \, y-\bar{x} \, \|_2+ \frac{\mbox{Lip}_{\nabla g}(\xi)\| \, y-x\, \|_2^2}{2};\,\zbld \,\right) \\[0.2in]
		\leq \,  h^\uparrow \left(\, f(\bar{x};\xi) - g(\bar{x};\xi);\zbld \, \right) + \, \mbox{Lip}_h(\zbld) \, \left[ \, \mbox{Lip}_f(\xi) \, \| \, y-\bar{x} \, \|_2 +
		\mbox{Lip}_g(\xi)\|\, y-\bar{x} \, \|_2 +  \displaystyle\frac{\mbox{Lip}_{\nabla g}(\xi)\|y-x\|_2^2}{2}\, \right] \\ [0.2in]
		= \,  h^\uparrow \left(\, m(\bar{x} ; \xi);\zbld \, \right) +  \,  \, \mbox{Lip}_h(\zbld) \, \left[ \, \mbox{Lip}_f(\xi) \, \| \, y-\bar{x} \, \|_2 +
		\mbox{Lip}_g(\xi)\|\,y-\bar{x} \, \|_2 +  \displaystyle\frac{\mbox{Lip}_{\nabla g}(\xi)\| \, y-x \, \|_2^2}{2} \, \right].
		\end{array}
		\]
		By Assumption \ref{ass for ULLN} (b) and setting $B = \text{Diam}(X)$, we further obtain that
		\[
		\begin{array}{l}
		\E_{\wt{\omega}}\left[\, \displaystyle\sup_{x,y\in X}
		h^\uparrow \left(\, f(y;\wt{\xi}) -  \displaystyle{
			\max_{j \in {\cal A}_{g;\varepsilon}(\bar{x};\wt{\xi})}
		} \, \left[ \, g_j(x;\wt{\xi})+(y-x)^{\top} \nabla_x g_j(x;\wt{\xi})\,\right];\,\wt{\zbld} \,\right) \, \right]
		\\ [0.2in]
		\leq \,
		\E_{\wt{\omega}}\left[\, \displaystyle\sup_{x\in X}h^\uparrow \left(\, m(\bar{x};\wt{\xi});\wt{\zbld} \,\right) \, \right]  +
		B \, \E_{\wt{\omega}} \left[\, \mbox{Lip}_h(\wt{\zbld}) \,\left( \mbox{Lip}_f(\wt{\xi})\, + \mbox{Lip}_g(\wt{\xi}) \, +
		\displaystyle{
			\frac{B \, \mbox{Lip}_{\nabla g}(\wt{\xi})}{2}
		} \, \right) \, \right] \\ [0.2in]
		< \, + \infty.
		\end{array}
		\]
		This string of inequalities is enough to yield
		the first inequality in \eqref{lemma: ULLN eq 1}. The second inequality in \eqref{lemma: ULLN eq 1} can be derived based on similar arguments
		and we omit the details here.
	\end{proof}
	
	From this point on, we will be working with infinite sequences $\{ \omega^n \}_{n=1}^{\infty}$ of random realizations of the random variable
	pairs $( \wt{\xi},\wt{\zbld} )$.  For this purpose, we let $\Omega^{\infty}$ denote the $\infty$-fold Cartesian product of the sample space
	$\Omega$.  Let $\mathcal{F}^{\infty}$ denote the sigma-algebra generated by subsets of $\Omega^{\infty}$, and let $\IP_{\infty}$
	be the corresponding probability measure defined on this sigma-algebra.  Let $\E_{\infty}$ be the expectation operator induced
	by $\IP_{\infty}$.  Throughout the analysis, we fix the probability tuple
	$\left( \Omega^\infty, \calF^\infty, \IP_\infty, \E_{\infty} \right)$.
	We say that an event $E \in \calF^\infty$ happens ``almost surely" if $\IP_\infty(E) = 1$.  Without loss of generality, we assume
	that the probability-one set $\wh{\Omega} \triangleq \wh{\Xi} \times {\cal Z}^{\, 1}$ is such that the
	limit (\ref{eq:uniform LLN L})
	in Lemma~\ref{lemma: ULLN 1} holds for all families $\{ \omega^n \}_{n=1}^{\infty} \subset \wh{\Omega}^{\, \infty}$.
	In the rest of the paper, for any such family of random realizations, we let, for each $N$, $x^{N;\varepsilon}(\omega^N)$ be a composite
	$\varepsilon$-strong d-stationary point of (\ref{eq:empirical composite saa}) corresponding to a given scalar $\varepsilon \geq 0$.
	(The case $\varepsilon = 0$ refers to a d-stationary point.)
	We will write $x^N$ for $x^{N;\varepsilon}(\omega^N)$ if the context is clear.
	
	\gap
	The following lemma is the key step to establish our main result of
	this section.
	
	\begin{lemma}\label{le: strong d} \rm
		Suppose that Assumption~\ref{ass for ULLN} holds.  Let $\varepsilon > 0$
		be given and let $\{ \omega^n \}_{n=1}^{\infty} \subset \wh{\Omega}$ be arbitrary.
		If the sequence
		$\{x^{N;\varepsilon}(\omega^N)\}$ converges to 
		$x^\infty$, then $x^\infty$ solves the nonconvex optimization problem
		\[
		\displaystyle\operatornamewithlimits{minimize}_{x\in X}\;
		R_{x^\infty;\varepsilon^{\prime}}(x,x^\infty)
		\]
		for every $\varepsilon^{\, \prime} \in [ 0,\varepsilon )$.
		In particular, $x^\infty$ is also a minimizer
		of $R_{x^\infty}(\bullet,x^\infty)$ on $X$.
	\end{lemma}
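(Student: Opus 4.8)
The plan is to pass to the limit in the composite $\varepsilon$-strong d-stationarity inequality for $x^{N;\varepsilon}(\omega^N)$, after replacing the moving base point $x^N$ by its limit $x^\infty$ at the price of an error that vanishes almost surely. Write $x^N$ for $x^{N;\varepsilon}(\omega^N)$, fix $\varepsilon^{\,\prime}\in[0,\varepsilon)$ and an arbitrary $x\in X$. By Definition~\ref{def:epsilon-stationarity} and Remark~\ref{remark:strong}, $\mathcal{M}_N(x^N)\le R_{N;x^N;\varepsilon}(x,x^N)$ for every $N$; moreover the population-level version of the identity in Remark~\ref{remark:strong} gives $R_{x^\infty;\varepsilon^{\,\prime}}(x^\infty,x^\infty)=\mathcal{M}(x^\infty)$, because the $\varepsilon^{\,\prime}$-argmax sets at $x^\infty$ still contain the exact argmax sets, so the inner maxima evaluated at $x^\infty$ collapse to $g(x^\infty;\xi)$ and $f(x^\infty;\xi)$. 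Hence it suffices to prove $\mathcal{M}(x^\infty)\le R_{x^\infty;\varepsilon^{\,\prime}}(x,x^\infty)$ for every $x\in X$; the ``in particular'' assertion is the choice $\varepsilon^{\,\prime}=0$.

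The key step is to bound the right-hand side of the empirical inequality by the corresponding population functional at $x^\infty$. Because $x^N\to x^\infty$, an argument identical to the proof of Lemma~\ref{lm:set inclusion}, uniform in $\xi$ thanks to the bound $c_0$ of Assumption~\ref{ass for ULLN}(a1), shows that once $\|x^N-x^\infty\|_2\le(\varepsilon-\varepsilon^{\,\prime})/(2c_0)$ --- hence for all large $N$ ---
\[
\mathcal{A}_{g;\varepsilon^{\,\prime}}(x^\infty;\xi)\subseteq\mathcal{A}_{g;\varepsilon}(x^N;\xi),\qquad \mathcal{A}_{f;\varepsilon^{\,\prime}}(x^\infty;\xi)\subseteq\mathcal{A}_{f;\varepsilon}(x^N;\xi),\qquad\forall\,\xi\in\wh{\Xi}.
\]
Since enlarging the index set can only raise the inner maxima, and since each $g_j(\bullet;\xi)$, $f_j(\bullet;\xi)$ and their gradients are Lipschitz with the gradients bounded (Assumptions~\ref{ass for ULLN}(a1)--(a3)), while $h^\uparrow(\bullet;\zbld)$ is nondecreasing and $h^\downarrow(\bullet;\zbld)$ is nonincreasing, both $\mbox{Lip}_h(\zbld)$-Lipschitz, one obtains termwise bounds $r^{\updownarrow}_{x^N;\varepsilon}(x,x^N;\omega^n)\le r^{\updownarrow}_{x^\infty;\varepsilon^{\,\prime}}(x,x^\infty;\omega^n)+\mbox{Lip}_h(\zbld^{\,n})\,\kappa(\xi^{\,n})\,\|x^N-x^\infty\|_2$ that after averaging give
\[
R_{N;x^N;\varepsilon}(x,x^N)\ \le\ R_{N;x^\infty;\varepsilon^{\,\prime}}(x,x^\infty)+\|x^N-x^\infty\|_2\cdot\frac1N\sum_{n=1}^N\mbox{Lip}_h(\zbld^{\,n})\,\kappa(\xi^{\,n}),
\]
where $\kappa(\xi)\triangleq\mbox{Lip}_f(\xi)+\mbox{Lip}_g(\xi)+C_f(\xi)+C_g(\xi)+\mbox{Diam}(X)\big(\mbox{Lip}_{\nabla f}(\xi)+\mbox{Lip}_{\nabla g}(\xi)\big)$, a sum of square-integrable terms.

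Now let $N\to\infty$ in $\mathcal{M}_N(x^N)\le R_{N;x^N;\varepsilon}(x,x^N)$ together with the last display. On the left, $\mathcal{M}_N(x^N)\to\mathcal{M}(x^\infty)$ by the continuous-convergence conclusion of Lemma~\ref{lemma: ULLN} (valid since $\mathcal{L}(\bullet;\omega)$ is continuous, $X$ is compact, $\E_{\wt{\omega}}[\sup_X|\mathcal{L}(\bullet;\wt{\omega})|]<+\infty$, $\mathcal{M}$ is Lipschitz, and $x^N\to x^\infty$). On the right, the correction term tends to $0$: $\mbox{Lip}_h(\zbld)\,\kappa(\xi)$ is integrable by Cauchy--Schwarz and the square-integrability hypotheses of Assumption~\ref{ass for ULLN}, so its sample average converges almost surely to a finite limit by the strong law of large numbers --- and this random variable does not involve $x^\infty$, so the governing almost-sure event is fixed in advance --- while $\|x^N-x^\infty\|_2\to0$; and $R_{N;x^\infty;\varepsilon^{\,\prime}}(x,x^\infty)\to R_{x^\infty;\varepsilon^{\,\prime}}(x,x^\infty)$ by Lemma~\ref{lemma: ULLN 1} at base point $x^\infty$ and scalar $\varepsilon^{\,\prime}$, evaluated at the fixed pair $(x,x^\infty)\in X\times X$ (its proof applies verbatim when $\varepsilon^{\,\prime}=0$), which is licensed at the random base point $x^\infty$ by the standing assumption that the limit in Lemma~\ref{lemma: ULLN 1} holds for all realization sequences drawn from $\wh{\Omega}^{\,\infty}$. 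Passing to the limit yields $\mathcal{M}(x^\infty)\le R_{x^\infty;\varepsilon^{\,\prime}}(x,x^\infty)$ for all $x\in X$, which is the claim.

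I expect the main obstacle to be the error-control step above: one must simultaneously absorb the jumps of the randomly varying $\varepsilon$-argmax sets $\mathcal{A}_{f/g;\varepsilon}(x^N;\xi)$ --- handled by the monotone inclusion of Lemma~\ref{lm:set inclusion}, which conveniently points in the favorable direction for an upper bound --- and the drift of the linearization point from $x^N$ to $x^\infty$ --- handled by the Lipschitz estimates and gradient bounds of Assumption~\ref{ass for ULLN} --- and then confirm that the resulting per-sample error is dominated, after averaging, by a fixed integrable random variable so that the strong law of large numbers removes it. A secondary subtlety is the appeal to the uniform law of large numbers at the random base point $x^\infty$, which is precisely why the probability-one set $\wh{\Omega}^{\,\infty}$ is fixed up front.
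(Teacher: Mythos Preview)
Your proposal is correct and follows essentially the same approach as the paper: both use the index-set inclusion of Lemma~\ref{lm:set inclusion} together with the monotonicity of $h^{\uparrow}$ and $h^{\downarrow}$ to pass from $R_{N;x^N;\varepsilon}(x,x^N)$ to $R_{N;x^\infty;\varepsilon'}(x,x^N)$, and then combine Lipschitz estimates with the uniform law of large numbers to reach the limit $R_{x^\infty;\varepsilon'}(x,x^\infty)$. The only cosmetic difference is the order of the last two steps: you move the linearization point $x^N\to x^\infty$ samplewise via a Lipschitz bound and dispose of the error by the strong law of large numbers before applying Lemma~\ref{lemma: ULLN 1} at the fixed pair $(x,x^\infty)$, whereas the paper first applies Lemma~\ref{lemma: ULLN 1} uniformly over $X\times X$ (keeping $x^N$ in the second slot) and then moves $x^N\to x^\infty$ at the population level by dominated convergence.
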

	
	\begin{proof}  Write $x^N \equiv x^{N;\varepsilon}(\omega^N)$ for simplicity.
		Since $x^N$ converges to 
		$x^\infty$, then for sufficiently large $N$, the following inclusions hold
		for all $\varepsilon^{\, \prime} \in [ 0,\varepsilon )$ and all $\xi \in \wh{\Xi}$,
		\[
		{\cal A}_{f; \varepsilon^{\, \prime}}(x^{\infty};\xi)
		\, \subseteq \, {\cal A}_{f;\varepsilon}(x^{N};\xi)
		\epc \mbox{and} \epc
		{\cal A}_{g; \varepsilon^{\, \prime}}(x^{\infty};\xi)
		\, \subseteq \, {\cal A}_{g;\varepsilon}(x^{N};\xi),
		\]
		by Lemma \ref{lm:set inclusion}.  Furthermore, since $x^N$ is a composite
		$\varepsilon$-strong d-stationary point of (\ref{eq:empirical composite saa}), it follows from
		\eqref{defn: new strong d} that for any $x \in X$,
		\begin{equation*}
		\begin{array}{l}
		\calM_N(x^N) \, \leq \, \displaystyle\frac{1}{N}\sum_{n = 1}^N h^\uparrow\left(f(x;\xi^{\,n}) - \displaystyle{
			\max_{j \in {\cal A}_{g;\varepsilon}(x^{N};\, \xi^{\,n})}
		} \, \left[\, g_j(x^N; \xi^{\,n}) +\nabla_x g_j\left(x^{N}; \xi^{\,n}\right)^{\top} (x - x^{N})\,\right]; \zbld^{\, n}\right) \\[0.25in]
		\hspace{0.7in} + \displaystyle{
			\frac{1}{N}
		} \, \displaystyle{
			\sum_{n = 1}^N
		} \,h^\downarrow \left( \displaystyle{
			\max_{j \in {\cal A}_{f;\varepsilon}(x^{N};\, \xi^{\,n})}
		} \, \left[\, f_j(x^N;\xi^{\,n})+\nabla_x f_j\left(x^{N}; \xi^{\,n}\right)^{\top} (x - x^{N})\,\right] - g(x; \xi^{\,n}) ; \zbld^{\, n}\right) \\[0.25in]
		\leq \, R_{N;x^\infty; \varepsilon^{\prime}}(x;x^N) \\ [0.15in]
		= \, \left[ \, R_{N;x^\infty; \varepsilon^{\prime}}(x,x^N) - R_{x^\infty; \varepsilon^{\prime}}(x,x^N) \, \right] +
		\left[ \, R_{x^\infty; \varepsilon^{\prime}}(x,x^N) - R_{x^\infty; \varepsilon^{\prime}}(x,x^{\infty}) \, \right] + R_{x^\infty; \varepsilon^{\prime}}(x,x^{\infty}) \\ [0.15in]
		\leq \, \left[ \, \displaystyle{
			\sup_{x^{\, \prime}, y \, \in \, X}
		} \, \left| \, R_{N;x^\infty; \varepsilon^{\prime}}(x^{\, \prime},y) - R_{x^\infty; \varepsilon^{\prime}}(x^{\, \prime},y) \,\right| \, \right] +
		\left[ \, \displaystyle{
			\sup_{x^{\, \prime} \in X}
		} \,  \left|\, R_{x^\infty; \varepsilon^{\prime}}(x^{\, \prime},x^N) - R_{x^\infty; \varepsilon^{\prime}}(x^{\, \prime},x^\infty) \,\right| \, \right]
		\\ [0.25in] \epc
		\epc + \, R_{x^\infty; \varepsilon^{\prime}}(x,x^\infty).
		\end{array}
		\end{equation*}
		Observe that
		\begin{equation*}
		\begin{array}{l}
		\displaystyle\sup_{x\in X} \left| \, R_{x^\infty;\varepsilon^{\prime}}^{\uparrow}(x,x^N) - R_{x^\infty;\varepsilon^{\prime}}^{\uparrow}(x,x^\infty) \,\right| \\[0.2in]
		\leq \, \E_{\wt{\omega}}\left[ \, \mbox{Lip}_h(\wt{\zbld}) \left| \, g(x^\infty;\wt{\xi}) - g(x^N;\wt{\xi}) + \displaystyle{
			\max_{j \in {\cal A}_{g;\varepsilon^{\prime}}(x^\infty;\wt{\xi})}
		} \, \left[ \, \nabla_x g_j(x^\infty;\wt{\xi})^\top \, x^\infty -  \nabla_x g_j(x^N;\wt{\xi})^\top\, x^N \,\right] \, \right| \, \right] \\[0.2in]
		\epc + \, \displaystyle{
			\sup_{x\in X}
		} \, \left[ \, \E_{\wt{\omega}} \, \mbox{Lip}_h(\wt{\zbld}) \, \|\, x \,\|_2 \, \max_{j \in {\cal A}_{g;\varepsilon^{\prime}}(x^\infty;\wt{\xi})} \,
		\left\|\, \nabla_x g_j(x^N;\wt{\xi}) - \nabla_x g_j(x^\infty;\wt{\xi}) \,\right\|_2\right]\\[0.2in]
		\leq \, \E_{\wt{\omega}}\left[\mbox{Lip}_h(\wt{\zbld}) \left|\, g(x^\infty;\wt{\xi}) - g(x^N;\wt{\xi}) + \displaystyle{
			\max_{j \in {\cal A}_{g;\varepsilon^{\prime}}(x^\infty; \,\xi)}
		} \, \left[ \, \nabla_x g_j(x^\infty;\wt{\xi})^\top \, x^\infty -  \nabla_x g_j(x^N;\wt{\xi})^\top\, x^N \,\right] \,\right| \, \right]\\[0.2in]
		\epc + \, \text{Diam}(X) \, \left[ \, \E_{\wt{\omega}} \, \mbox{Lip}_h(\wt{\zbld}) \, \displaystyle{
			\max_{j \in {\cal A}_{g;\varepsilon^{\prime}}(x^\infty; \,\xi)}
		} \, \left\|\, \nabla_x g_j(x^N;\wt{\xi}) - \nabla_x g_j(x^\infty;\wt{\xi}) \,\right\|_2\right].
		\end{array}
		\end{equation*}
		By the dominating convergence theorem and the continuity of both $g(\bullet;\xi)$ and $\nabla_x g_j(\bullet;\xi)$ from Assumption \ref{ass for ULLN},
		it follows that the last sum goes to $0$ as $N\to \infty$.
		Similarly, we can derive
		$\displaystyle\lim_{N\to \infty}\sup_{x\in X} \left| \, R_{x^\infty;\varepsilon^{\prime}}^\downarrow(x,x^N) - R_{x^\infty;\varepsilon^{\prime}}^\downarrow(x,x^\infty) \,\right| = 0$.
		It then follows from Lemma \ref{lemma: ULLN 1} that for all $x \in X$,
		\[
		R_{x^\infty;\varepsilon^{\prime}}(x^\infty,x^\infty) \, = \
		\calM(x^\infty) \, =  \, \lim_{N \rightarrow \infty} \calM_N(x^N)  \, \leq \, R_{x^\infty;\varepsilon^{\prime}}(x,x^\infty),
		\]
		which is the first conclusion of this lemma. The second conclusion can be obtained by noting that $R_{x^\infty;\varepsilon^{\prime}}(x,x^\infty) \leq R_{x^\infty}(x,x^\infty)$
	\end{proof}
	
	\begin{lemma}\label{lemma:bound for r}\rm
		Suppose that Assumption \ref{ass for ULLN} holds.
		Then for all $\omega \in \wh{\Omega}$, any $\varepsilon>0$, and all $\bar{x} \in X$,
		\[
		\left\{\begin{array}{ll}
		\left|\, r^{\uparrow}_{\bar{x};\varepsilon}(x,\bar{x};\omega) - r_{\bar{x};\varepsilon}^{\uparrow}(y,\bar{x};\omega) \,\right|
		\, \leq \, \mbox{Lip}_h(\zbld) \left[\, \text{Lip}_f(\xi) + C_g(\xi) \,\right] \, \| \, x - y \, \|_2 \\[0.1in]
		\left|\, r^{\downarrow}_{\bar{x};\varepsilon}(x,\bar{x};\omega) - r_{\bar{x};\varepsilon}^{\uparrow}(y,\bar{x};\omega) \, \right|
		\, \leq \, \mbox{Lip}_h(\zbld) \left[\, C_f(\xi) + \text{Lip}_g(\xi) \,\right] \, \| \, x - y \, \|_2
		\end{array}\right. , \quad \forall\; x,y \, \in \, \mathbb{R}^p.
		\]
	\end{lemma}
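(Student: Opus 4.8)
The plan is to peel off the outer monotone function and then control the inner arguments piece by piece using the Lipschitz and gradient‑bound hypotheses of Assumption~\ref{ass for ULLN}. Fix $\omega = (\xi,\zbld) \in \wh{\Omega}$, so that the constants $\mbox{Lip}_f(\xi)$, $\mbox{Lip}_g(\xi)$, $C_f(\xi)$, $C_g(\xi)$ and $\mbox{Lip}_h(\zbld)$ are all available, and recall from Lemma~\ref{lemma:cvx decomp} and the remark following it that the monotone pieces $h^{\uparrow}(\,\bullet\,;\zbld)$ and $h^{\downarrow}(\,\bullet\,;\zbld)$ are Lipschitz continuous with the same modulus $\mbox{Lip}_h(\zbld)$ as $h(\,\bullet\,;\zbld)$. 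Since in $r^{\uparrow}_{\bar{x};\varepsilon}(\,\bullet\,,\bar{x};\omega)$ the linearizations of the $g_j$ are taken at $\bar{x}$ and the index set ${\cal A}_{g;\varepsilon}(\bar{x};\xi)$ does not depend on the first argument, I would first write
\[
\left| \, r^{\uparrow}_{\bar{x};\varepsilon}(x,\bar{x};\omega) - r^{\uparrow}_{\bar{x};\varepsilon}(y,\bar{x};\omega) \, \right| \, \leq \, \mbox{Lip}_h(\zbld) \, \left| \, \psi(x) - \psi(y) \, \right|,
\]
where $\psi(z) \triangleq f(z;\xi) - \displaystyle\max_{j \in {\cal A}_{g;\varepsilon}(\bar{x};\xi)}\left[\, g_j(\bar{x};\xi) + (z-\bar{x})^{\top}\nabla_x g_j(\bar{x};\xi) \,\right]$.

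Next I would split $|\psi(x) - \psi(y)|$ by the triangle inequality into the contribution of $f(\,\bullet\,;\xi)$ and that of the pointwise maximum of linearizations. For the former, $f(\,\bullet\,;\xi) = \max_{1\leq j\leq k_f} f_j(\,\bullet\,;\xi)$ is a maximum of functions sharing the common Lipschitz modulus $\mbox{Lip}_f(\xi)$ on $X$ by \eqref{eq:Lipschitz fg}, hence is itself $\mbox{Lip}_f(\xi)$-Lipschitz there. For the latter, each linearization $z \mapsto g_j(\bar{x};\xi) + (z-\bar{x})^{\top}\nabla_x g_j(\bar{x};\xi)$ is affine with constant gradient $\nabla_x g_j(\bar{x};\xi)$ whose norm is at most $C_g(\xi)$ by Assumption~\ref{ass for ULLN}(a3) (using $\bar{x}\in X$), so each linearization, and therefore the pointwise maximum over the fixed index set ${\cal A}_{g;\varepsilon}(\bar{x};\xi)$, is $C_g(\xi)$-Lipschitz. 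Combining the two parts yields $|\psi(x) - \psi(y)| \leq \left(\mbox{Lip}_f(\xi) + C_g(\xi)\right)\|x-y\|_2$ for $x,y \in X$, and multiplying through by $\mbox{Lip}_h(\zbld)$ gives the first asserted inequality.

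The bound for $r^{\downarrow}_{\bar{x};\varepsilon}$ follows from the same bookkeeping with the roles of $f$ and $g$ interchanged: $h^{\downarrow}(\,\bullet\,;\zbld)$ is again $\mbox{Lip}_h(\zbld)$-Lipschitz; the pointwise maximum of the linearizations of the $f_j(\,\bullet\,;\xi)$ at $\bar{x}$ is $C_f(\xi)$-Lipschitz by Assumption~\ref{ass for ULLN}(a3); and $g(\,\bullet\,;\xi) = \max_{1\leq j\leq k_g} g_j(\,\bullet\,;\xi)$ is $\mbox{Lip}_g(\xi)$-Lipschitz by \eqref{eq:Lipschitz fg}; adding these moduli and scaling by $\mbox{Lip}_h(\zbld)$ gives the second inequality. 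I do not anticipate any genuine obstacle: this is a routine Lipschitz‑modulus computation, and the only two points that merit an explicit sentence are (i) that $h^{\uparrow}$ and $h^{\downarrow}$ inherit the Lipschitz constant of $h$ (already recorded after Lemma~\ref{lemma:cvx decomp}), and (ii) the elementary fact that a pointwise maximum of finitely many functions with a common Lipschitz constant is Lipschitz with that same constant, which is what lets us pass from the individual (affine) linearizations to their maximum.
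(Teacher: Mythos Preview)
Your proposal is correct and follows essentially the same approach as the paper: apply the Lipschitz modulus $\mbox{Lip}_h(\zbld)$ of $h^{\uparrow}$ (inherited from $h$), then bound the change in the inner argument by the triangle inequality using $\mbox{Lip}_f(\xi)$ for the $f$-part and the gradient bound $C_g(\xi)$ for the pointwise maximum of the affine linearizations of the $g_j$ at $\bar{x}$; the $r^{\downarrow}$ case is symmetric. Your remark that the argument as written requires $x,y\in X$ (since \eqref{eq:Lipschitz fg} and Assumption~\ref{ass for ULLN}(a3) are stated on $X$) is well taken; the paper's proof has the same implicit restriction, which suffices for all later uses of the lemma.
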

	\begin{proof}
		This can be easily seen by the following string of inequalities
		\[
		\begin{array}{rl}
		& \left| \, r^{\uparrow}_{\bar{x};\varepsilon}(x,\bar{x}; \omega) - r^{\uparrow}_{\bar{x};\varepsilon}(y,\bar{x};\omega)\, \right| \\[0.15in]
		\leq  & \mbox{Lip}_h(\zbld) \left( \, \left| \, f(x;\xi) - f(y; \xi) \,\right| +
		\displaystyle\max_{j \in {\cal A}_{g;\varepsilon}(\bar{x};\xi)}\left| \,(x - y)^{\top} \nabla_x g_j(\bar{x};\xi)\,\right| \,\right)\\[0.2in]
		\leq  & \mbox{Lip}_h(\zbld) \left[\, \text{Lip}_f(\xi) + C_g(\xi) \,\right] \,\| \, x - \, y \|_2
		\end{array}\]
		and similar ones for $r^{\downarrow}_{\bar{x};\varepsilon}(\bullet,\bar{x};\omega)$. 	
	\end{proof}
	
	Let $\mathcal{D}$ denote the set of directional stationary solution of \eqref{eq:population composite saa}, i.e.,
	\[
	\mathcal{D}\,\triangleq\, \left\{\, \bar{x} \in X\, \mid \mathcal{M}^{\, \prime}(\bar{x};x - \bar{x}) \geq 0, \epc \forall\; x\in X   \,\right\}.
	\]
	For any $x^{\, \prime} \in \mathbb{R}^n$, we also let $\mbox{dist}(x^{\, \prime},\mathcal{D}) \triangleq \displaystyle{
		\inf_{x \in {\cal D}}
	} \, \| \, x - x^{\, \prime} \|$, where $\| \bullet \|$ denotes the Euclidean norm of vectors.
	We are now ready to present the main convergence result, which shows that the limit of the empirical composite $\varepsilon$-strong d-stationary points
	is a d-stationary point of the population risk under mild conditions.
	
	\begin{theorem} \label{th:main}\rm
		Suppose that Assumption~\ref{ass for ULLN} holds.
		Let $\varepsilon > 0$ be given.
		Thus
		\begin{equation} \label{eq:distance to zero}
		\IP_{\infty}\left( \{ \omega^n \}_{n=1}^{\infty} \subset \wh{\Omega} \, \left| \,
		\displaystyle{
			\lim_{N \to \infty}
		} \, \mbox{dist}(x^{N;\varepsilon}(\omega^N),\mathcal{D}) \, = \, 0 \, \right. \right) \, = \, 1.
		\end{equation}
		In particular, if $\{x^{N;\varepsilon}(\omega^N)\}$ converges to $x^\infty$ almost surely, then $x^{\infty} \in {\cal D}$.
	\end{theorem}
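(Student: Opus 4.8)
The plan is to feed Lemma~\ref{le: strong d}, which already carries the bulk of the limiting analysis, into the elementary fact that for every $\bar x\in X$ the surrogate $y\mapsto R_{\bar x}(y,\bar x)$ and the population objective $\mathcal M$ have the \emph{same} one-sided directional derivative at $\bar x$. Granting this, the ``in particular'' assertion is immediate: if $\{x^{N;\varepsilon}(\omega^N)\}$ converges to $x^\infty$ on a probability-one event, then by Lemma~\ref{le: strong d} (the case $\varepsilon^{\,\prime}=0$) the point $x^\infty$ minimizes $R_{x^\infty}(\cdot,x^\infty)$ over the convex set $X$ on that event, so the directional derivative of $R_{x^\infty}(\cdot,x^\infty)$ at $x^\infty$ along $x-x^\infty$ is nonnegative for every $x\in X$; by the directional-derivative identity this reads $\mathcal M^{\,\prime}(x^\infty;x-x^\infty)\geq 0$ for all $x\in X$, i.e.\ $x^\infty\in\mathcal D$.

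I would prove the directional-derivative identity first pointwise in $\omega\in\wh\Omega$. Varying only the first argument, the inner argument of $r^{\uparrow}_{\bar x}(\cdot,\bar x;\omega)$ equals $m(\bar x;\xi)$ at $y=\bar x$ and has directional derivative $\max_{j\in\mathcal A_f(\bar x;\xi)}\nabla_x f_j(\bar x;\xi)^{\top}d-\max_{j\in\mathcal A_g(\bar x;\xi)}\nabla_x g_j(\bar x;\xi)^{\top}d=m^{\,\prime}(\bar x;d;\xi)$, since at $y=\bar x$ the linearizations of the $g_j$ with $j\in\mathcal A_g(\bar x;\xi)$ coincide and $f^{\,\prime}(\bar x;d;\xi)=\max_{j\in\mathcal A_f(\bar x;\xi)}\nabla_x f_j(\bar x;\xi)^{\top}d$; the same computation applies to $r^{\downarrow}_{\bar x}(\cdot,\bar x;\omega)$. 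Because $h^{\uparrow}(\cdot;\zbld)$ and $h^{\downarrow}(\cdot;\zbld)$ are convex, hence Hadamard directionally differentiable, the chain rule for directional derivatives together with $(h^{\uparrow})^{\,\prime}(t;s)+(h^{\downarrow})^{\,\prime}(t;s)=h^{\,\prime}(t;s)$ shows that the directional derivative at $\bar x$ along $d$ (in the first argument) of $r^{\uparrow}_{\bar x}(\cdot,\bar x;\omega)+r^{\downarrow}_{\bar x}(\cdot,\bar x;\omega)$ equals $h^{\,\prime}\big(m(\bar x;\xi);\zbld;m^{\,\prime}(\bar x;d;\xi)\big)=\mathcal L^{\,\prime}(\bar x;d;\omega)$. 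To pass to expectations I would invoke dominated convergence: the relevant difference quotients in $\tau$ are dominated by the integrable majorant of Lemma~\ref{lemma:bound for r} (a product of the square-integrable $\mbox{Lip}_h(\zbld)$ with $\mbox{Lip}_f(\xi)+C_g(\xi)$, resp.\ $C_f(\xi)+\mbox{Lip}_g(\xi)$), yielding that the directional derivative of $R_{\bar x}(\cdot,\bar x)$ at $\bar x$ along $d$ equals $\E_{\wt\omega}[\mathcal L^{\,\prime}(\bar x;d;\wt\omega)]$; the analogous, simpler domination via \eqref{eq:Lipschitz m} and Assumption~\ref{ass for ULLN}(b) gives $\mathcal M^{\,\prime}(\bar x;d)=\E_{\wt\omega}[\mathcal L^{\,\prime}(\bar x;d;\wt\omega)]$, and the identity follows.

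For the probability statement \eqref{eq:distance to zero} I would argue by compactness along subsequences. Work on the probability-one event on which the sampled family lies in $\wh\Omega$ and the uniform law of large numbers of Lemma~\ref{lemma: ULLN 1} holds (the standing convention preceding the theorem). There $\{x^{N;\varepsilon}(\omega^N)\}\subset X$ is bounded; if $\mbox{dist}(x^{N;\varepsilon}(\omega^N),\mathcal D)\not\to 0$, then along some subsequence the distance stays $\geq\eta>0$ while, by compactness of $X$, a further subsequence converges to some $x^{\ast}\in X$. The proof of Lemma~\ref{le: strong d} applies verbatim along this subsequence---the uniform-LLN suprema and the sample averages $R_{N;x^{\ast};\varepsilon^{\,\prime}}$ converge along any subsequence, and the index-set inclusions of Lemma~\ref{lm:set inclusion} take effect once the subsequence is close enough to $x^{\ast}$---so $x^{\ast}$ minimizes $R_{x^{\ast}}(\cdot,x^{\ast})$ on $X$ and hence $x^{\ast}\in\mathcal D$ by the previous paragraph, contradicting $\mbox{dist}(\cdot,\mathcal D)\geq\eta$. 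Therefore $\mbox{dist}(x^{N;\varepsilon}(\omega^N),\mathcal D)\to 0$ on a probability-one event, which is \eqref{eq:distance to zero}; the concluding sentence of the theorem is precisely the ``in particular'' statement already obtained.

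The main obstacle is the directional-derivative identity of the second paragraph---i.e.\ showing that ``$x^\infty$ minimizes $R_{x^\infty}(\cdot,x^\infty)$'' upgrades to ``$x^\infty$ is d-stationary for \eqref{eq:population composite saa}'', for which it is \emph{not} enough that $R_{\bar x}(\cdot,\bar x)$ majorizes $\mathcal M$ with equality at $\bar x$ (this alone gives the wrong inequality between directional derivatives)---together with the care needed to invoke the uniform law of large numbers at the sample-path-dependent center $x^{\ast}$ (or $x^\infty$); the latter is exactly why the probability-one set is fixed beforehand, as in the convention preceding the theorem, so that \eqref{eq:uniform LLN L} holds simultaneously over the centers arising as limits. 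The remaining ingredients---compactness, subsequence extraction, and the two dominated-convergence interchanges---are routine given Assumption~\ref{ass for ULLN} and Lemmas~\ref{lm:set inclusion}, \ref{lemma:bound for r} and~\ref{le: strong d}.
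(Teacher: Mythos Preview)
Your proposal is correct and follows essentially the same route as the paper: argue by contradiction using compactness of $X$ to extract a convergent subsequence with limit $x^\infty$, invoke Lemma~\ref{le: strong d} to conclude that $x^\infty$ minimizes $R_{x^\infty}(\cdot,x^\infty)$ on $X$, and then identify the directional derivative of $R_{x^\infty}(\cdot,x^\infty)$ at $x^\infty$ with $\mathcal{M}^{\,\prime}(x^\infty;\cdot)$ via an interchange of expectation and directional derivative justified by the Lipschitz bound of Lemma~\ref{lemma:bound for r}. The paper cites \cite[Theorem~7.44]{ShapiroDR09} for this interchange rather than spelling out dominated convergence, but the content is the same; your explicit discussion of the sample-path-dependent center and the role of the pre-fixed probability-one set is, if anything, more careful than the paper's presentation.
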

	\begin{proof}  Suppose that (\ref{eq:distance to zero}) fails to hold.  Then there exists an event set
		$\mathcal{E}$ with positive probability such that for any family $\{ \omega^n \}_{n=1}^{\infty}$ in $\mathcal{E}$, we have $\displaystyle{
			\liminf_{N \to \infty}
		} \mbox{dist}(x^{N;\varepsilon}(\omega^N),\mathcal{D}) \, > \, 0 $.
		Let $\{ \omega^n \}_{n=1}^{\infty}$ be any such
		family.
		Since $X$ is compact, by passing to a subsequence
		if necessary, we may assume without loss of generality that
		the entire sequence $\left\{x^{N;\varepsilon}(\omega^N)\right\}$ converges to a point $x^\infty$.
		By 
		Lemma \ref{le: strong d}, we may deduce that $x^\infty$ is an optimal solution of
		$\displaystyle\operatornamewithlimits{minimize}_{y\in X}\;R_{x^\infty}(y,x^\infty)$. 
		Hence, we have that for any $x \in X$,
		\[
		\begin{array}{ll}
		\left(R_{x^\infty}^{\,\uparrow}(\,\bullet\,, x^\infty)\right)^\prime(x^\infty; x-x^\infty)  +
		\left(R_{x^\infty}^{\,\downarrow}(\,\bullet\,, x^\infty)\right)^\prime(x^\infty; x-x^\infty) \\ [0.15in]
		= \, \E_{\omega}\left[(h^\uparrow)^\prime(\,\bullet\,; \zbld)\left(m(x^\infty; \xi); \,f(\bullet;\xi)^{\prime}(x^\infty;x-x^\infty) -
		\displaystyle\max_{j \in {\cal A}_g(x^\infty; \,\xi)}  \nabla_x g_j(x^\infty; \xi)^\top (x-x^\infty) \, \right)\right]\\ [0.2in]
		+ \, \E_{\omega}\left[(h^\downarrow)^\prime(\,\bullet\,; \zbld)\left(m(x^\infty; \xi); \,\displaystyle\max_{j \in {\cal A}_f(x^\infty;\xi)}
		\nabla_x f_j(x^\infty, \xi)^\top (x-x^\infty) -g(\bullet;\xi)^{\prime}(x^\infty; x-x^\infty) \,\right)\,\right]\\ [0.2in]
		= \, \mathcal{M}^\prime(x^\infty; x- x^\infty) \geq 0,
		\end{array}\]
		where the equality is obtained by exchanging the directional derivative and the expectation based on \cite[Theorem 7.44]{ShapiroDR09}
		and Lemma \ref{lemma:bound for r}.
	\end{proof}
	
	Combining Theorem~\ref{th:main} with Proposition~\ref{pr:epsilon and zero}, we  obtain sufficient conditions for the consistency of the d-stationary points.  Before
	stating this result, we note that the $\bar{\varepsilon}$ in the latter proposition depends on the sample set $\{\xi^{\,n}\}_{n=1}^N$.
	In what follows, we provide a sufficient condition that guarantees the existence of a uniform $\bar{\varepsilon}$ that is independent of the samples
	so that the proposition can be applied to the sampled d-stationary points.  This condition is a sort of ``sufficient separation''
	between the component functions in the two pointwise maximum functions $f(\bullet;\xi)$ and $g(\bullet;\xi)$ at a given point $\bar{x}$.
	Specifically, we say that the (pointwise) {\sl sufficient separation condition} holds at $\bar{x} \in X$ if
	there exist positive constants $\delta$ and $c$ and a probability-one set $\Xi^{\, ss}_{\bar{x}}$ such that
	for all $\xi \in \Xi^{\, ss}_{\bar{x}}$,
	\[
	\begin{array}{rll}
	\displaystyle{
		\inf_{x \in \mathbb{B}_{\delta}(\bar{x})}
	} \, \left[ \, \displaystyle{
		\max_{j \in {\cal A}_f(x;\xi)}
	} \, f_j(x;\xi) - \displaystyle{
		\max_{j \notin {\cal A}_f(x;\xi)}
	} \, f_j(x;\xi) \, \right] & \geq & c \\ [0.2in]
	\displaystyle{
		\inf_{x \in \mathbb{B}_{\delta}(\bar{x})}
	} \, \left[ \, \displaystyle{
		\max_{j \in {\cal A}_g(x;\xi)}
	} \, g_j(x;\xi) - \displaystyle{
		\max_{j \notin {\cal A}_g(x;\xi)}
	} \, g_j(x;\xi) \, \right] & \geq & c.
	\end{array} 
	\]
	We first establish a lemma that establishes the equality of various index sets for points near any given point $\bar{x}$ satisfying this
	condition.
	
	\begin{lemma} \label{lm:sufficient separation} \rm
		Suppose that Assumption~\ref{ass for ULLN} 
		holds.  If $\bar{x}$ satisfies the sufficient separation condition with the associated probability-one set $\Xi^{\, ss}_{\bar{x}}$,
		then there exist positive constants $\bar{\varepsilon}$ and
		$\bar{x}$ such that
		$\mathcal{A}_{f;\varepsilon}(x;\xi) = \mathcal{A}_{f}(\bar{x};\xi)$ and
		$\mathcal{A}_{g;\varepsilon}(x;\xi) = \mathcal{A}_{g}(\bar{x};\xi)$ for
		all $\varepsilon \in [ \, 0,\bar{\varepsilon} \, ]$, all
		$x \in \mathbb{B}_{\bar{\delta}}(\bar{x})$, and all $\xi \in \wh{\Xi} \, \cap \, \Xi^{\, ss}_{\bar{x}}$.
	\end{lemma}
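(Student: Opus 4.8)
The plan is to exploit the fact that the sufficient separation condition is precisely what makes the \emph{exact} argmax sets $\mathcal{A}_f(\,\cdot\,;\xi)$ and $\mathcal{A}_g(\,\cdot\,;\xi)$ locally constant near $\bar{x}$, and then to pass from these exact sets to the $\varepsilon$-argmax sets for small $\varepsilon$. The Lipschitz bookkeeping parallels that in the proof of Lemma~\ref{lm:set inclusion}, but now the uniform separation gap $c$ lets one pin the index sets down exactly rather than merely up to a factor. All constants produced depend only on $\delta$, $c$ from the hypothesis and on the uniform Lipschitz bound $c_0$ from Assumption~\ref{ass for ULLN}(a1) --- the only part of that assumption that is used --- and hence are independent of $\xi\in\wh{\Xi}\cap\Xi^{\, ss}_{\bar{x}}$, which yields the asserted uniformity. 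As in the proof of Lemma~\ref{lm:epsilon and zero}, a short continuity argument disposes of the degenerate case in which all of $\{f_j(\bar{x};\xi)\}_{j=1}^{k_f}$ (resp.\ $\{g_j(\bar{x};\xi)\}_{j=1}^{k_g}$) coincide: under the separation condition this forces the corresponding components to agree on an entire neighborhood of $\bar{x}$ (otherwise the separation infimum, being a gap of continuous functions at points tending to $\bar{x}$, would tend to $0$), making the conclusion immediate there. I treat the nondegenerate case below, only for the $f$-family, the argument for $g$ being identical with the same $c$.

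Fix $\xi\in\wh{\Xi}\cap\Xi^{\, ss}_{\bar{x}}$, so that $\mbox{Lip}_f(\xi)\le c_0$ and \eqref{eq:Lipschitz fg} holds, and set $\bar{\delta}\triangleq\min\{\,\delta,\,c/(3c_0)\,\}$ and $\bar{\varepsilon}\triangleq c/2$. First I would show $\mathcal{A}_f(x;\xi)\subseteq\mathcal{A}_f(\bar{x};\xi)$ for $x\in\mathbb{B}_{\bar{\delta}}(\bar{x})$: evaluating the separation condition at $x=\bar{x}$ gives $\max_{j\notin\mathcal{A}_f(\bar{x};\xi)}f_j(\bar{x};\xi)\le\max_{j}f_j(\bar{x};\xi)-c$, and combining this with $|f_j(x;\xi)-f_j(\bar{x};\xi)|\le c_0\bar{\delta}$ on $\mathbb{B}_{\bar{\delta}}(\bar{x})$ and $2c_0\bar{\delta}<c$ shows that at any such $x$ every index in $\mathcal{A}_f(\bar{x};\xi)$ has strictly larger $f_j(x;\xi)$-value than every index outside $\mathcal{A}_f(\bar{x};\xi)$, giving the inclusion. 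For the reverse inclusion I would argue by contradiction: if $j_0\in\mathcal{A}_f(\bar{x};\xi)\setminus\mathcal{A}_f(x;\xi)$ for some $x\in\mathbb{B}_{\bar{\delta}}(\bar{x})$, then applying the separation condition \emph{at this $x$} (legitimate since $x\in\mathbb{B}_{\delta}(\bar{x})$) together with $f_{j_0}(x;\xi)\le\max_{j\notin\mathcal{A}_f(x;\xi)}f_j(x;\xi)$ yields $\max_{j}f_j(x;\xi)-f_{j_0}(x;\xi)\ge c$, whereas the Lipschitz bound together with $f_{j_0}(\bar{x};\xi)=\max_{j}f_j(\bar{x};\xi)$ gives $\max_{j}f_j(x;\xi)-f_{j_0}(x;\xi)\le 2c_0\bar{\delta}<c$, a contradiction. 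Hence $\mathcal{A}_f(x;\xi)=\mathcal{A}_f(\bar{x};\xi)$ on $\mathbb{B}_{\bar{\delta}}(\bar{x})$.

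It remains to pass from the exact to the $\varepsilon$-argmax sets. For $x\in\mathbb{B}_{\bar{\delta}}(\bar{x})$ and $j\notin\mathcal{A}_f(x;\xi)$, the separation condition at $x$ gives $f_j(x;\xi)\le\max_{j'}f_{j'}(x;\xi)-c<\max_{j'}f_{j'}(x;\xi)-\varepsilon$ for every $\varepsilon\in[0,\bar{\varepsilon}]$, so $j\notin\mathcal{A}_{f;\varepsilon}(x;\xi)$; since $\mathcal{A}_f(x;\xi)\subseteq\mathcal{A}_{f;\varepsilon}(x;\xi)$ always, the previous step gives $\mathcal{A}_{f;\varepsilon}(x;\xi)=\mathcal{A}_f(x;\xi)=\mathcal{A}_f(\bar{x};\xi)$, and identically for $g$, which with the stated $\bar{\delta},\bar{\varepsilon}$ is the claim. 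The main obstacle is the reverse inclusion $\mathcal{A}_f(\bar{x};\xi)\subseteq\mathcal{A}_f(x;\xi)$: an index active at $\bar{x}$ could a priori drop out of the argmax at a nearby point, and the only thing ruling this out is the separation condition in its full ``for all $x\in\mathbb{B}_{\delta}(\bar{x})$'' form --- evaluating it merely at $\bar{x}$ would not suffice --- which is also why the empty-complement degenerate case must be quarantined at the outset.
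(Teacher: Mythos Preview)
Your proof is correct and follows essentially the same approach as the paper: both combine the uniform Lipschitz bound $c_0$ from Assumption~\ref{ass for ULLN}(a1) with the separation gap $c$ evaluated at both $\bar{x}$ and at nearby points $x$ to pin down the index sets. The paper routes one inclusion through Lemma~\ref{lm:set inclusion} and cites the mechanism in the proof of Proposition~\ref{pr:epsilon and zero}, whereas you redo the Lipschitz estimates directly and first establish $\mathcal{A}_f(x;\xi)=\mathcal{A}_f(\bar{x};\xi)$ before passing to the $\varepsilon$-sets; but the constants, the key inequalities, and the use of the separation condition at varying $x$ are the same.
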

	
	\begin{proof}  To simplify the notation somewhat, we assume in the proof below that the two probability-one sets
		$\wh{\Xi}$ and $\Xi^{\, ss}_{\bar{x}}$ coincide.
		Let scalars $\bar{\varepsilon} \in ( \, 0,\, c/2 \, ]$ and $\bar{\delta} \in \left( \, 0, \min\left( \, \delta, \, \displaystyle{
			\frac{\bar{\varepsilon}}{4c_0}
		} \, \right) \, \right)$  be arbitrary.
		By Lemma~\ref{lm:set inclusion},
		for all $\varepsilon^{\, \prime} \in [ 0, \bar{\varepsilon}/2 ]$, all $\xi \in \wh{\Xi}$,
		and all pairs $x^1$ and $x^2$ in $X$ satisfying $\| \, x^1 - x^2 \, \|_2 \leq \bar{\delta}$, we have
		${\cal A}_{f;\varepsilon^{\, \prime}}(x^1;\xi) \subseteq {\cal A}_{f;\bar{\varepsilon}}(x^2; \xi)$ and
		${\cal A}_{g;\varepsilon^{\, \prime}}(x^1;\xi) \subseteq {\cal A}_{g;\bar{\varepsilon}}(x^2; \xi)$.
		In particular,
		with $\varepsilon^{\, \prime} = 0$, we have $ \mathcal{A}_{f}(\bar{x};\xi)\subseteq \mathcal{A}_{f;\bar{\varepsilon}}(x;\xi)$ and
		$\mathcal{A}_{g}(\bar{x};\xi) \subseteq \mathcal{A}_{g;
			\bar{\varepsilon}}(x;\xi)$ for all $x\in \mathbb{B}_{\bar{\delta}}(\bar{x})$ and all $\xi\in \wh{\Xi}$.
		We claim that the reverse inclusions hold.  Indeed,
		we derive from the proof of Proposition~\ref{pr:epsilon and zero} that
		$\mathcal{A}_{f;\varepsilon}(x;\xi) = \mathcal{A}_{f}(x;\xi)$ and
		$\mathcal{A}_{g;\varepsilon}(x;\xi) = \mathcal{A}_{g}(x;\xi)$
		for all $\varepsilon \in [ \, 0,\, c/2 \, ]$,  all $x \in \mathbb{B}_{\bar{\delta}}(\bar{x})$, 
		and all $\xi \in \wh{\Xi}$.
		Then for any $x\in \mathbb{B}_{\bar{\delta}}(\bar{x})$, if $j \in \mathcal{A}_{f;\bar{\varepsilon}}(x;\xi)$, we have $j \in \mathcal{A}_{f}(x;\xi)$; thus
		$f_j(x;\xi) \geq \displaystyle{
			\max_{1 \leq i \leq k_f}
		} \, f_i(x;\xi)$.  This implies that
		\[
		\begin{aligned}
		c_0 \, \|\bar{x} - x\|_2 +f_j(\bar{x}, \xi) \geq f_j(x, \xi) &
		\geq \max_{1 \leq i \leq k_f}f_i(\bar{x}, \xi) + \max_{1 \leq i \leq k_f}f_i(x, \xi) - \max_{1 \leq i \leq k_f}f_i(\bar{x}, \xi) \\[0.1in]
		& \geq \max_{1 \leq i \leq k_f}f_i(\bar{x}, \xi)- c_0 \, \|\bar{x} - x\|_2,
		\end{aligned}
		\]
		which further yields
		\[
		f_j(\bar{x}, \xi)  \geq \max_{1 \leq i \leq k_f}f_i(\bar{x}, \xi) - 2c_0 \, \|\bar{x} - x\|_2.
		\]
		We thus obtain $j \in \mathcal{A}_{f;\bar{\varepsilon}}(\bar{x};\xi) = \mathcal{A}_{f}(\bar{x};\xi)$.
		Therefore, ${\cal A}_{f;\bar{\varepsilon}}(x; \xi) = \mathcal{A}_{f}(\bar{x};\xi) = {\cal A}_{f;\varepsilon^{\, \prime}}(\bar{x}; \xi)$
		for all $\varepsilon^{\, \prime} \in [ \, 0,\bar{\varepsilon} \, ]$, all $x \in \mathbb{B}_{\bar{\delta}}(\bar{x})$ 
		and all $\xi \in \wh{\Xi}$.  Similarly we can prove the corresponding conclusion for $g$.
	\end{proof}
	
	
	Relying on Lemma~\ref{lm:sufficient separation}, we have the following corollary of Theorem~\ref{th:main} about the d-stationarity
	of convergent sequence of d-stationarity points of the empirical problems.
	
	\begin{corollary} \label{co:sufficient separation} \rm
		Suppose that Assumptions~\ref{ass for ULLN} holds.  Let $\{ \omega^n \}_{n=1}^{\infty} \subset \wh{\Omega}$ be arbitrary.
		For each positive integer $N$, let $x^N(\omega^N)$ be a d-stationary point of (\ref{eq:empirical composite saa})
		corresponding to $\{ \omega^n \}_{n=1}^N$.
		If the sequence $\{ x^N(\omega^N) \}$ converges to $x^\infty$ satisfying the
		sufficient separation condition, then $x^{\infty} \in {\cal D}$.
	\end{corollary}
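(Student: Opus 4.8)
The plan is to reduce the corollary to Theorem~\ref{th:main} by manufacturing, out of the sufficient separation condition at $x^\infty$, a single tolerance $\bar\varepsilon > 0$ that does \emph{not} depend on $N$ (in contrast to the sample-dependent $\bar\varepsilon_N$ of Proposition~\ref{pr:epsilon and zero}) and with respect to which each $x^N(\omega^N)$ is, for all large $N$, a composite $\bar\varepsilon$-strong d-stationary point of \eqref{eq:empirical composite saa}. Write $x^N \equiv x^N(\omega^N)$ throughout; by hypothesis $x^N \to x^\infty$, and we follow the standing convention of restricting the realizations to lie in the probability-one set $\Xi^{\, ss}_{x^\infty}$ attached to the sufficient separation condition, so that the displayed gap inequalities hold at each $\xi^n$.

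First I would apply Lemma~\ref{lm:sufficient separation} with $\bar x = x^\infty$: it produces constants $\bar\varepsilon > 0$ and $\bar\delta > 0$ such that $\mathcal{A}_{f;\varepsilon}(x;\xi) = \mathcal{A}_f(x^\infty;\xi)$ and $\mathcal{A}_{g;\varepsilon}(x;\xi) = \mathcal{A}_g(x^\infty;\xi)$ for every $\varepsilon \in [\,0,\bar\varepsilon\,]$, every $x \in \mathbb{B}_{\bar\delta}(x^\infty)$, and every $\xi \in \wh\Xi \cap \Xi^{\, ss}_{x^\infty}$; in particular, comparing the cases $\varepsilon = 0$ and $\varepsilon = \bar\varepsilon$, one gets $\mathcal{A}_{f;\bar\varepsilon}(x;\xi) = \mathcal{A}_f(x;\xi)$ and $\mathcal{A}_{g;\bar\varepsilon}(x;\xi) = \mathcal{A}_g(x;\xi)$ on that ball. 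Choose $N_0$ with $x^N \in \mathbb{B}_{\bar\delta}(x^\infty)$ for all $N \geq N_0$. Then for such $N$ the product index set $\prod_{n=1}^N \mathcal{A}_{f;\bar\varepsilon}(x^N;\xi^n)\times\mathcal{A}_{g;\bar\varepsilon}(x^N;\xi^n)$ coincides with $\prod_{n=1}^N \mathcal{A}_f(x^N;\xi^n)\times\mathcal{A}_g(x^N;\xi^n)$. Since $x^N$ is d-stationary, Lemma~\ref{lemma:d-stat}, in the equivalent form used in Remark~\ref{remark:strong} with $\varepsilon = 0$, gives $\mathcal{M}_N(x^N) \leq \widehat{\mathcal{M}}_{N;J_1,J_2}(x,x^N)$ for all $x \in X$ and all $(J_1,J_2)$ in the latter product set, hence in the former; by Remark~\ref{remark:strong} and Definition~\ref{def:epsilon-stationarity} this says precisely that $x^N$ is a composite $\bar\varepsilon$-strong d-stationary point of \eqref{eq:empirical composite saa} for every $N \geq N_0$.

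Now I would invoke Lemma~\ref{le: strong d} with $\varepsilon = \bar\varepsilon$: its proof uses only the composite $\bar\varepsilon$-strong d-stationarity of the iterates for large $N$ together with $x^N \to x^\infty$, so the restriction $N \geq N_0$ is harmless. This yields that $x^\infty$ minimizes $R_{x^\infty}(\bullet,x^\infty)$ over $X$. The passage from this minimization property to $x^\infty \in \mathcal{D}$ is then verbatim the last paragraph of the proof of Theorem~\ref{th:main}: writing the one-sided directional derivative of $R_{x^\infty}^{\uparrow}(\bullet,x^\infty) + R_{x^\infty}^{\downarrow}(\bullet,x^\infty)$ at $x^\infty$ and exchanging it with the expectation, which is justified by \cite[Theorem~7.44]{ShapiroDR09} together with the uniform Lipschitz bounds of Lemma~\ref{lemma:bound for r}, gives $\mathcal{M}^{\,\prime}(x^\infty; x - x^\infty) \geq 0$ for every $x \in X$, i.e.\ $x^\infty \in \mathcal{D}$.

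The hard part is the middle step, namely obtaining a tolerance $\bar\varepsilon$ uniform in $N$. Proposition~\ref{pr:epsilon and zero} already upgrades a d-stationary point to a composite $\varepsilon$-strong d-stationary one, but only for $\varepsilon$ below a threshold $\bar\varepsilon_N$ that may shrink to zero along the sequence as the active component functions of $f$ and $g$ cluster, whereas Theorem~\ref{th:main} genuinely needs a fixed positive $\varepsilon$. The sufficient separation condition at $x^\infty$ is exactly the hypothesis ruling out this clustering on a neighborhood of $x^\infty$, and Lemma~\ref{lm:sufficient separation} converts it into the uniform control of the $\varepsilon$-argmax sets that the argument consumes. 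The remaining, purely measure-theoretic point---that each $\xi^n$ should lie in the probability-one set $\Xi^{\, ss}_{x^\infty}$---is absorbed into the paper's convention of working on a fixed probability-one sample space and is automatic in the almost-sure reformulation.
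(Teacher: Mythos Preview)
Your proposal is correct and follows essentially the same route as the paper's proof: invoke Lemma~\ref{lm:sufficient separation} at $x^\infty$ to obtain a sample-independent $\bar\varepsilon$, use it to upgrade the d-stationary $x^N$ to composite $\bar\varepsilon$-strong d-stationary for large $N$, and then conclude via Theorem~\ref{th:main}. Your treatment is in fact more explicit than the paper's---you spell out the deduction $\mathcal{A}_{f;\bar\varepsilon}(x;\xi)=\mathcal{A}_f(x;\xi)$ from the two endpoint cases of Lemma~\ref{lm:sufficient separation}, you note that only the tail $N\geq N_0$ matters for the limit argument, and you flag the measure-theoretic point about $\Xi^{ss}_{x^\infty}$---but the underlying strategy is identical.
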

	
	
	\begin{proof}  By Lemma~\ref{lm:sufficient separation}, it follows that for some scalar $\bar{\varepsilon} > 0$, it holds that
		for all $N$ sufficiently large, $\mathcal{A}_{f;\bar{\varepsilon}}(x^N(\omega^N);\xi^{n}) = \mathcal{A}_{f}(x^N(\omega^N);\xi^{n})$
		and $\mathcal{A}_{g;\bar{\varepsilon}}(x^N(\omega^N);\xi^{n}) = \mathcal{A}_{g}(x^N(\omega^N);\xi^{n})$.
		Therefore, $x^N(\omega^N)$ is a composite $\bar{\varepsilon}$-strong d-stationary point of (\ref{eq:empirical composite saa}) for all $N$ sufficiently large.
		The desired conclusion follows readily from Theorem \ref{th:main}.
	\end{proof}
	
	\begin{remark} \rm
		It is possible to state a probabilistic conclusion of Corollary~\ref{co:sufficient separation} similar to that in Theorem~\ref{th:main}.
		For this to hold, we need to strengthen the sufficient separation condition to all d-stationary solutions in ${\cal D}$; more importantly,
		the same constants $c$ and $\delta$ have to hold uniformly for all such solutions.  We omit the details and leave the Corollary in its
		pointwise form as stated above.
	\end{remark}
	
	\section{Asymptotic Distribution of the Stationary Values}
	
	In this and and the next section, we will work with sequences of composite $\varepsilon$-strong d-stationary solution of (\ref{eq:empirical composite saa})
	for an arbitrary fixed $\varepsilon > 0$.
	Our goal in this section is to derive an asymptotic distribution of the sequence of stationary values $\{ {\cal M}_N(x^N) \}$, where
	for each $N$, $x^N$ is a composite $\varepsilon$-strong d-stationary solution of (\ref{eq:empirical composite saa}),
	under the following piecewise affine assumption:
	
	\begin{assumption}\label{assu: PA}\rm
		The function $m(\bullet;\xi)$ is a piecewise affine function, i.e., each $f_j(\bullet;\xi)$	and $g_j(\bullet\,;\xi)$ are affine functions.
	\end{assumption}
	
	An important consequence of this special structure is the following lemma.
	
	\begin{lemma}\label{lemma:equiv epsilon}\rm
		Suppose that Assumption \ref{ass for ULLN}~(a1) and Assumption \ref{assu: PA} hold.
		Then for any $\varepsilon>0$, there exists $\delta>0$ such that for any $\varepsilon^{\prime} \in \left[ 0,\frac{\varepsilon}{2} \right]$,
		any $x$ and $\bar{x}$ satisfying $\| x - \bar{x} \|_2 \leq \delta$, and all $\omega \in \wh{\Omega}$,
		\begin{equation} \label{eq:equalities in r}
		r^{\updownarrow}_{\bar{x};\varepsilon^{\prime}}(x,\bar{x};\omega) \, = \, r^{\updownarrow}_{x;\varepsilon}(x,x;\omega) \, = \, r^{\updownarrow}_x(x,x;\omega).
		\end{equation}
		Hence, for any family $\{ \omega^n \}_{n=1}^{\infty} \subset \wh{\Omega}$
		\[
		R_{N;\bar{x};\varepsilon^{\prime}}( x,\bar{x}) \, = \, R_{N;x;\varepsilon}(x,x) \, = \, {\cal M}_N(x)
		\epc \mbox{and} \epc
		R_{\bar{x};\varepsilon^{\prime}}(x,\bar{x}) = R_{x;\varepsilon}(x,x) \, = \, {\cal M}(x).
		\]
	\end{lemma}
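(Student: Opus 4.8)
The plan is to cash in the piecewise-affine hypothesis (Assumption~\ref{assu: PA}) in two steps. First, because each $g_j(\cdot;\xi)$ and $f_j(\cdot;\xi)$ is affine, the linearization ``$g_j(x;\xi)+(y-x)^{\top}\nabla_x g_j(x;\xi)$'' is exactly $g_j(y;\xi)$, and likewise for $f_j$; substituting into the defining formulas for $r^{\uparrow}_{\bar{x};\varepsilon'}$ and $r^{\downarrow}_{\bar{x};\varepsilon'}$ makes the second (base) argument disappear, leaving for all $\omega\in\wh{\Omega}$
\[
r^{\uparrow}_{\bar{x};\varepsilon'}(x,\bar{x};\omega)=h^{\uparrow}\Big(f(x;\xi)-\max_{j\in{\cal A}_{g;\varepsilon'}(\bar{x};\xi)}g_j(x;\xi);\,\zbld\Big),\qquad
r^{\downarrow}_{\bar{x};\varepsilon'}(x,\bar{x};\omega)=h^{\downarrow}\Big(\max_{j\in{\cal A}_{f;\varepsilon'}(\bar{x};\xi)}f_j(x;\xi)-g(x;\xi);\,\zbld\Big),
\]
which is the identity already recorded after \eqref{def: R sum}. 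This immediately disposes of the second equality in \eqref{eq:equalities in r}, with no restriction on $\|x-\bar{x}\|_2$: since ${\cal A}_g(x;\xi)\subseteq{\cal A}_{g;\varepsilon}(x;\xi)$ while every index of the latter set still satisfies $g_j(x;\xi)\le g(x;\xi)$, we get $\max_{j\in{\cal A}_{g;\varepsilon}(x;\xi)}g_j(x;\xi)=g(x;\xi)$ and symmetrically $\max_{j\in{\cal A}_{f;\varepsilon}(x;\xi)}f_j(x;\xi)=f(x;\xi)$, so $r^{\updownarrow}_{x;\varepsilon}(x,x;\omega)=h^{\updownarrow}(m(x;\xi);\zbld)=r^{\updownarrow}_{x}(x,x;\omega)$.

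For the first equality it then remains to show that, for a suitable $\delta>0$, whenever $\|x-\bar{x}\|_2\le\delta$, $\xi\in\wh{\Omega}$ and $\varepsilon'\in[0,\varepsilon/2]$, one has $\max_{j\in{\cal A}_{g;\varepsilon'}(\bar{x};\xi)}g_j(x;\xi)=g(x;\xi)$ (and its $f$-analogue). The ``$\le$'' side is automatic. For ``$\ge$'' I would choose $\delta$ through Lemma~\ref{lm:set inclusion}, taking the free scalar there to be $\varepsilon/2$; the Lipschitz bound $\max(\mbox{Lip}_f(\xi),\mbox{Lip}_g(\xi))\le c_0$ from Assumption~\ref{ass for ULLN}(a1) is exactly what ties the displacement $\|x-\bar{x}\|_2\le\delta$ to the growth of the affine pieces, so that an index attaining $g(x;\xi)$ (resp.\ $f(x;\xi)$) at $x$ must satisfy the $\varepsilon'$-argmax inequality at $\bar{x}$, i.e.\ lies in ${\cal A}_{g;\varepsilon'}(\bar{x};\xi)$ (resp.\ ${\cal A}_{f;\varepsilon'}(\bar{x};\xi)$); the same $\delta$ then works for both families. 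Feeding ``$\le$'' and ``$\ge$'' back into the first display yields $r^{\updownarrow}_{\bar{x};\varepsilon'}(x,\bar{x};\omega)=h^{\updownarrow}(m(x;\xi);\zbld)=r^{\updownarrow}_{x}(x,x;\omega)$, which is \eqref{eq:equalities in r}.

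The remaining assertions are obtained by averaging: summing \eqref{eq:equalities in r} over $\omega=\omega^1,\dots,\omega^N$ gives $R_{N;\bar{x};\varepsilon'}(x,\bar{x})=R_{N;x;\varepsilon}(x,x)=\frac1N\sum_{n=1}^N h(m(x;\xi^n);\zbld^n)={\cal M}_N(x)$ for any family $\{\omega^n\}$, and taking expectations (finite under the standing hypotheses together with Assumption~\ref{ass for ULLN}) gives $R_{\bar{x};\varepsilon'}(x,\bar{x})=R_{x;\varepsilon}(x,x)={\cal M}(x)$. I expect the ``$\ge$'' step to be the only real obstacle: the gap between the maximal and the remaining values of $g(\cdot;\xi)$ at $\bar{x}$ is not bounded away from zero as $\xi$ ranges over $\wh{\Omega}$, so one cannot argue pointwise in $\xi$; everything must be funneled through Lemma~\ref{lm:set inclusion}, whose $2\varepsilon$-enlargement in the conclusion is precisely what absorbs the $O(c_0\delta)$ perturbation incurred in passing from $\bar{x}$ to $x$, and it is the interplay between this enlargement and the range $\varepsilon'\le\varepsilon/2$ that must be handled carefully.
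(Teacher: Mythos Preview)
Your approach is essentially the same as the paper's. Both proofs exploit Assumption~\ref{assu: PA} to make the second argument of $r^{\updownarrow}$ irrelevant and then invoke Lemma~\ref{lm:set inclusion} to obtain the chain of index-set inclusions ${\cal A}_g(x;\xi)\subseteq{\cal A}_{g;\varepsilon'}(\bar{x};\xi)\subseteq{\cal A}_{g;\varepsilon}(x;\xi)$ (and its $f$-analogue); the paper packages the conclusion as a two-sided sandwich $r^{\updownarrow}_{\bar{x};\varepsilon'}\le r^{\updownarrow}_x=r^{\updownarrow}_{x;\varepsilon}\le r^{\updownarrow}_{\bar{x};\varepsilon'}$, whereas you argue the same equality directly via $\max_{j\in{\cal A}_{g;\varepsilon'}(\bar{x};\xi)}g_j(x;\xi)=g(x;\xi)$.
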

	
	\begin{proof}
		It follows from Lemma \ref{lm:set inclusion} that there exists a positive scalar $\delta$ such that for any
		$\varepsilon^{\prime} \in [ \, 0, \frac{\varepsilon}{2} \, ]$ and any $x$ and $\bar{x}$ satisfying $\| x - \bar{x}\|_2 \leq \delta$,
		and any $\xi$ in the probability-one set $\wh{\Xi}$,
		\[
		{\cal A}_{f}(x;\xi) \, \subseteq {\cal A}_{f; \varepsilon^{\prime}}(\bar{x};\xi) \, \subseteq \, {\cal A}_{f;\varepsilon}(x;\xi)
		\epc \mbox{and} \epc {\cal A}_{g}(x;\xi_n) \subseteq
		{\cal A}_{g;\varepsilon^{\prime}}(\bar{x};\xi) \, \subseteq \, {\cal A}_{g;\varepsilon}(x;\xi)
		\]
		Noticing that when $m(\bullet;\xi)$ is a piecewise affine function, we have
		$r^{\updownarrow}_{\bar{x};\varepsilon}(x,x^1;\omega) = r^{\updownarrow}_{\bar{x};\varepsilon}(x,x^2;\omega)$ for any
		$x$, $x^1$, $x^2$, and $\bar{x}$ in $X$, any $\varepsilon \geq 0$, and any $\omega \in \wh{\Omega}$.
		Therefore, for any $\varepsilon^{\prime} \in \left[ 0,\frac{\varepsilon}{2} \right]$,
		and any $x$ and $\bar{x}$ satisfying $\| x - \bar{x} \|_2 \leq \delta$, we derive for any $\omega \in \wh{\Omega}$,
		\[
		\begin{array}{lll}
		r^{\updownarrow}_{\bar{x};\varepsilon^{\prime}}(x,\bar{x};\omega) \, = \, r^{\updownarrow}_{\bar{x};\varepsilon^{\prime}}(x,x;\omega)
		& \leq & r^{\updownarrow}_x(x,x;\omega) \\ [0.1in]
		& = & r^{\updownarrow}_{x;\varepsilon}(x,x;\omega) \, \leq \, r^{\updownarrow}_{\bar{x};\varepsilon^{\prime}}(x,x;\omega) \, = \,
		r^{\updownarrow}_{\bar{x};\varepsilon^{\prime}}(x,\bar{x};\omega).
		\end{array}
		\]
		Consequently, equalities hold throughout, establishing the equalities in (\ref{eq:equalities in r}).
	\end{proof}
	
	An interesting consequence of Lemma~\ref{lemma:equiv epsilon} is that if $x^{\infty}$
	is as described in Lemma~\ref{le: strong d},
	then $x^{\infty}$ is a local minimizer of the population level problem
	(\ref{eq:population composite saa}).  This observation enables us to establish the
	following consistency result of local minima.
	
	\begin{corollary}\label{cor: local min consistency} \rm
		Suppose that Assumption \ref{ass for ULLN}~(a1) and Assumption \ref{assu: PA} hold.
		If $\{x^{N;\varepsilon}(\omega^N)\}$ converges to $x^\infty$ almost surely, then
		$x^{\infty}$ is a local minimizer of the population level
		problem (\ref{eq:population composite saa}).
	\end{corollary}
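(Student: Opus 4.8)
The plan is to combine Lemma~\ref{le: strong d} with Lemma~\ref{lemma:equiv epsilon}, along the lines already indicated in the remark immediately preceding the statement. I would begin by restricting attention to the probability-one event in $\Omega^\infty$ on which simultaneously $\{\omega^n\}_{n=1}^\infty \subset \wh{\Omega}$ (an event of probability one, since $\IP(\wh{\Omega}) = 1$) and $\{x^{N;\varepsilon}(\omega^N)\}$ converges to $x^\infty$; the remainder of the argument then proceeds deterministically on this event, so the ``almost surely'' qualifier in the conclusion is dispatched at the outset and no passage to a subsequence is needed.

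On this event, because the whole sequence $\{x^{N;\varepsilon}(\omega^N)\}$ converges to $x^\infty$ and each of its terms is a composite $\varepsilon$-strong d-stationary point of \eqref{eq:empirical composite saa}, Lemma~\ref{le: strong d} applies verbatim and yields that $x^\infty$ minimizes $R_{x^\infty}(\bullet,x^\infty)$ over $X$, i.e.,
\[
\calM(x^\infty) \, = \, R_{x^\infty}(x^\infty,x^\infty) \, \leq \, R_{x^\infty}(x,x^\infty), \epc \forall \; x \in X.
\]
I would then invoke Lemma~\ref{lemma:equiv epsilon} with $\bar{x} = x^\infty$ and $\varepsilon^{\, \prime} = 0$: under the piecewise affine Assumption~\ref{assu: PA}, it furnishes a scalar $\delta > 0$ (uniform over $\omega \in \wh{\Omega}$) such that $r^{\updownarrow}_{x^\infty}(x,x^\infty;\omega) = r^{\updownarrow}_x(x,x;\omega)$ whenever $\|x - x^\infty\|_2 \leq \delta$; taking expectations over $\wt{\omega}$ --- legitimate thanks to the integrability conditions in \eqref{def:R} and Assumption~\ref{ass for ULLN} --- produces the identity $R_{x^\infty}(x,x^\infty) = \calM(x)$ for all such $x$. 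Combining this identity with the displayed inequality, for every $x \in X \cap \mathbb{B}_\delta(x^\infty)$ one gets $\calM(x) = R_{x^\infty}(x,x^\infty) \geq \calM(x^\infty)$, which is precisely the statement that $x^\infty$ is a local minimizer of \eqref{eq:population composite saa}.

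I do not expect a genuine obstacle: the analytic substance is already carried by the two cited lemmas, and the corollary is essentially their juxtaposition. The only point I would treat with some care is that the radius $\delta$ delivered by Lemma~\ref{lemma:equiv epsilon} is independent of $\omega \in \wh{\Omega}$ --- which holds because that $\delta$ is inherited from Lemma~\ref{lm:set inclusion} applied with the common bound $c_0$ of Assumption~\ref{ass for ULLN}(a1) on $\mbox{Lip}_f(\xi)$ and $\mbox{Lip}_g(\xi)$ --- so that the pointwise equality between the random functionals $r^{\updownarrow}_{x^\infty}(x,x^\infty;\omega)$ and $r^{\updownarrow}_x(x,x;\omega)$ indeed survives the integration defining $\calM$ and $R_{x^\infty}(\bullet,x^\infty)$.
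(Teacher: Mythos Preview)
Your proposal is correct and follows essentially the same route as the paper: invoke Lemma~\ref{le: strong d} to obtain that $x^\infty$ minimizes $R_{x^\infty;\varepsilon'}(\bullet,x^\infty)$ on $X$, then use Lemma~\ref{lemma:equiv epsilon} to identify $R_{x^\infty;\varepsilon'}(x,x^\infty)$ with $\calM(x)$ on a ball around $x^\infty$. The only cosmetic difference is that you take $\varepsilon' = 0$ while the paper leaves $\varepsilon'$ generic; both choices are covered by the two lemmas, and your extra care about the uniformity of $\delta$ over $\omega \in \wh{\Omega}$ is a welcome clarification.
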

	
	\begin{proof}
		Under the given assumptions, we know if $\{x^{N;\varepsilon}(\omega^N)\}$ converges
		to $x^\infty$ almost surely, then $x^\infty \in \calD$.  By
		Lemma~\ref{lemma:equiv epsilon}, as long as $\| x - x^\infty \|_2 \leq \delta$,
		we have $R_{x^\infty;\varepsilon^{\prime}}(x,x^\infty) = R_{x;\varepsilon}(x,x) =
		{\cal M}(x)$.  Since $x^\infty \in \displaystyle{
			\operatornamewithlimits{\mbox{argmin}}_{x \in X}
		} \, R_{x^\infty;\varepsilon^{\prime}}(x,x^\infty)$, we may conclude that
		$x^\infty$ minimizes ${\cal M}(x)$ locally on $X$.
	\end{proof}
	
	Besides being instrumental in establishing the consistency of a convergent sequence
	of composite $\varepsilon$-strong d-stationary solutions
	of (\ref{eq:empirical composite saa}), Lemma~\ref{le: strong d}, along with
	Lemma~\ref{lemma:equiv epsilon}, enables us to derive the following theorem
	that provides the asymptotic distribution of the stationary values $\calM_N(x^N)$
	for such a sequence $\{ x^N \}$.
	In what follows, we use the notation $\xrightarrow{d}$ to denote the
	convergence in distribution, and $\mathcal{N}(\mu, \sigma^2)$ denotes the
	normal distribution with mean $\mu$ and variance $\sigma^2$.
	In addition, we use $\mbox{Var}[\bullet]$ to represent the variance of a random
	variable.  We recall the objective function
	${\cal L}(x;\omega) = h(m(x;\xi);\zbld)$ of the population
	problem (\ref{eq:population composite saa}).
	
	\begin{theorem}\label{weak convergence}\rm
		Suppose Assumptions \ref{ass for ULLN} and \ref{assu: PA} hold.
		Let $\{ x^{N;\varepsilon} \}$ be a composite $\varepsilon$-strong d-stationary point of (\ref{eq:empirical composite saa}) corresponding to a family
		$\{ \omega^n \}_{n=1}^{\infty} \subset \wh{\Omega}$.
		If $x^{N;\varepsilon}$ converges to $x^\infty$ almost surely and $\calL(x^\infty;\bullet)$ is square integrable, then
		\[
		\sqrt{N} \, \left[ \, \calM_N(x^{N;\varepsilon}) - \calM(x^{\, \infty}) \, \right] \, \xrightarrow{d} \, \inf_{x \in S} \; \mathbb{G}_x,
		\]
		where $\mathbb{G}_x$ follows $\mathcal{N}\left(\,0, \mbox{Var}\left[ {\cal L}(x;\wt{\omega}) \right]\,\right)$
		and $S \triangleq \displaystyle{
			\operatornamewithlimits{\mbox{argmin}}_{x \in \mathbb{B}_{\delta}(x^{\infty})}
		} \, {\cal M}(x)$ where $\delta$ is such that Lemma~\ref{lemma:equiv epsilon} holds.
		In particular, if $S = \{x^\infty\}$, then
		\[
		\sqrt{N} \left(\calM_N(x^{N;\varepsilon}) - \calM(x^{\, \infty})\right) \, \xrightarrow{d} \, \mathcal{N}(0, \mbox{Var}\left[ \calL(x^\infty;\widetilde{\omega})\right]).
		\]
	\end{theorem}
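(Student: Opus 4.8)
The plan is to use the piecewise affine structure to \emph{localize} both the empirical and the population problem around $x^\infty$, so that the stationary value $\calM_N(x^{N;\varepsilon})$ becomes the optimal value of a genuine local minimization of the sample average $\calM_N$, and then to invoke the delta theorem for optimal values of sample average approximations. Concretely, since $x^{N;\varepsilon}\to x^\infty$ a.s., Lemma~\ref{le: strong d} gives that $x^\infty$ minimizes $R_{x^\infty}(\bullet,x^\infty)$ on $X$; applying Lemma~\ref{lemma:equiv epsilon} with parameter $2\varepsilon$ produces a \emph{deterministic} $\delta>0$ such that $R_{\bar x;\varepsilon^{\prime}}(x,\bar x)=\calM(x)$ and $R_{N;\bar x;\varepsilon^{\prime}}(x,\bar x)=\calM_N(x)$ for every $\varepsilon^{\prime}\in[\,0,\varepsilon\,]$, every $\bar x$, and all $x$ with $\|x-\bar x\|_2\leq\delta$. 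Taking $\bar x=x^\infty$ (and arguing as in Corollary~\ref{cor: local min consistency}) gives $\calM(x^\infty)=\min_{x\in K}\calM(x)$ with $K\triangleq\mathbb{B}_{\delta}(x^\infty)\cap X$, so $x^\infty\in S\triangleq\operatornamewithlimits{\mbox{argmin}}_{x\in K}\calM(x)$; taking $\bar x=x^N\equiv x^{N;\varepsilon}$ and using Remark~\ref{remark:strong} (which gives $\calM_N(x^N)\leq R_{N;x^N;\varepsilon}(x,x^N)$ for all $x\in X$) shows that $x^N$ minimizes $\calM_N$ over $\mathbb{B}_{\delta}(x^N)\cap X$. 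Because $x^N\to x^\infty$, for all large $N$ the ball $\mathbb{B}_{\delta}(x^N)$ contains $K$ (shrinking $\delta$ slightly if necessary) while $x^N\in K$, so $\calM_N(x^N)=\min_{x\in K}\calM_N(x)$; hence, on a probability-one event,
\[
\sqrt{N}\,\big[\,\calM_N(x^{N;\varepsilon})-\calM(x^\infty)\,\big]\;=\;\sqrt{N}\,\Big[\,\min_{x\in K}\calM_N(x)-\min_{x\in K}\calM(x)\,\Big]\qquad\text{for all large }N .
\]

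Next I would establish a uniform law of large numbers and a functional central limit theorem over $K$. The integrand $\calL(\bullet;\omega)=h(m(\bullet;\xi);\zbld)$ is continuous on $\mathbb{R}^p$ (piecewise affine composed with the convex $h$), and by \eqref{eq:Lipschitz m} together with Assumption~\ref{ass for ULLN}(b), $|\calL(x;\omega)-\calL(x^\infty;\omega)|\leq 2c_0\,\mbox{Lip}_h(\zbld)\,\|x-x^\infty\|_2$ for $x\in K$; thus the hypothesis that $\calL(x^\infty;\bullet)$ is square integrable propagates to a square-integrable envelope $\sup_{x\in K}|\calL(x;\bullet)|$ and a square-integrable Lipschitz modulus for the family $\{\calL(x;\bullet):x\in K\}$. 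Lemma~\ref{lemma: ULLN} then yields $\sup_{x\in K}|\calM_N(x)-\calM(x)|\to 0$ almost surely, and, being Lipschitz-in-parameter over the compact (hence totally bounded) set $K$ with a square-integrable envelope, the family $\{\calL(x;\bullet):x\in K\}$ is $\IP$-Donsker; therefore $\sqrt{N}(\calM_N-\calM)$ converges weakly in $C(K)$ to a tight centered Gaussian process $\mathbb{G}$ with $\mbox{Cov}(\mathbb{G}_x,\mathbb{G}_y)=\mbox{Cov}(\calL(x;\wt{\omega}),\calL(y;\wt{\omega}))$, and in particular $\mathbb{G}_x\sim\mathcal{N}(0,\mbox{Var}[\,\calL(x;\wt{\omega})\,])$ for each $x\in K$.

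Finally I would apply the delta theorem for optimal values. Writing $Z_N\triangleq\sqrt{N}(\calM_N-\calM)$ and $v^{*}\triangleq\calM(x^\infty)=\min_K\calM$, one has $\sqrt{N}\,\big[\min_{x\in K}\calM_N(x)-v^{*}\big]=\min_{x\in K}\big[\,Z_N(x)+\sqrt{N}(\calM(x)-v^{*})\,\big]$. For $x\notin S$ the ``barrier'' term $\sqrt{N}(\calM(x)-v^{*})\to+\infty$, which forces the minimizing $x$ into shrinking neighborhoods of $S$; combined with the stochastic equicontinuity of $Z_N$ delivered by the Donsker property and the tightness of $\mathbb{G}$, the argmin/continuous-mapping argument of \cite[Theorem~5.7]{ShapiroDR09} gives $\sqrt{N}[\min_K\calM_N-v^{*}]\xrightarrow{d}\inf_{x\in S}\mathbb{G}_x$, which is the asserted limit. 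When $S=\{x^\infty\}$ this reduces to $\mathbb{G}_{x^\infty}\sim\mathcal{N}(0,\mbox{Var}[\,\calL(x^\infty;\wt{\omega})\,])$, which is the last display.

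The main obstacle is the localization step, i.e.\ upgrading the inequality defining composite $\varepsilon$-strong d-stationarity to the exact identity $\calM_N(x^N)=\min_{x\in K}\calM_N(x)$ valid simultaneously for all large $N$; this is where Assumption~\ref{assu: PA} is indispensable (through the deterministic radius $\delta$ of Lemma~\ref{lemma:equiv epsilon}), and where one must carefully reconcile the shifting balls $\mathbb{B}_{\delta}(x^N)$ with the fixed ball $\mathbb{B}_{\delta}(x^\infty)$ entering the definition of $S$. A secondary point requiring attention is the functional CLT---the $\IP$-Donsker property of $\{\calL(x;\bullet):x\in K\}$---which rests on the uniform Lipschitz bound \eqref{eq:Lipschitz m} and the square-integrability hypotheses of Assumption~\ref{ass for ULLN}; with these in hand, \cite[Theorem~5.7]{ShapiroDR09} completes the proof.
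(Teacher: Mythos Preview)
Your proposal is correct and follows essentially the same path as the paper: both rely on the piecewise-affine equalities of Lemma~\ref{lemma:equiv epsilon} to reduce the stationary value to an optimal value, verify the Lipschitz/integrability hypotheses via Lemma~\ref{lemma:bound for r} (equivalently, \eqref{eq:Lipschitz m} and Assumption~\ref{ass for ULLN}), and then invoke \cite[Theorem~5.7]{ShapiroDR09}.

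The one genuine difference is in how the localization is executed. The paper does \emph{not} restrict $\calM_N$ to a shrinking ball; instead it keeps the auxiliary objective $R_{N;x^\infty;\varepsilon'}(\bullet,x^\infty)$ over all of $X$ and shows directly that $x^N$ is a \emph{global} minimizer of it: from $\calA_{g;\varepsilon'}(x^\infty;\xi)\subseteq\calA_{g;\varepsilon}(x^N;\xi)$ (Lemma~\ref{lm:set inclusion}) one gets $R_{N;x^N;\varepsilon}(x,x^N)\leq R_{N;x^\infty;\varepsilon'}(x,x^N)=R_{N;x^\infty;\varepsilon'}(x,x^\infty)$ for every $x\in X$ (the last equality using that, under Assumption~\ref{assu: PA}, the second argument is irrelevant), and the $\varepsilon$-strong d-stationarity gives the lower bound. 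Thus \cite[Theorem~5.7]{ShapiroDR09} is applied to a single fixed constraint set $X$, and only at the end is Lemma~\ref{lemma:equiv epsilon} invoked to rewrite the variance $\mbox{Var}[r_{x^\infty;\varepsilon'}(x,x^\infty;\wt\omega)]$ as $\mbox{Var}[\calL(x;\wt\omega)]$ and the solution set as $\operatornamewithlimits{\mbox{argmin}}_{x\in\mathbb{B}_\delta(x^\infty)}\calM(x)$. This sidesteps the reconciliation of the moving balls $\mathbb{B}_\delta(x^N)$ with the fixed ball $\mathbb{B}_\delta(x^\infty)$ that you flag as the ``main obstacle''; your fix (halving $\delta$) is perfectly legitimate, since the theorem only asks for \emph{some} $\delta$ for which Lemma~\ref{lemma:equiv epsilon} holds, but the paper's route simply avoids the issue.
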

	\begin{proof}
		As $x^{N;\varepsilon}$ converges to $x^\infty$ almost surely, it follows from Lemma \ref{lemma:equiv epsilon} that for all such sufficiently large $N$
		and any $\varepsilon^{\prime} \in \left[ \, 0, \frac{\varepsilon}{2} \, \right]$,
		\[
		\begin{array}{l}
		\sqrt{N} \, \left[ \, \calM_N(x^{N;\varepsilon}) - \calM(x^{\, \infty}) \, \right] \,  = \,
		\sqrt{N} \, \left[ \, R_{N;x^{N;\varepsilon}}(x^{N;\varepsilon},x^{N;\varepsilon}) - R_{x^{\, \infty}; \varepsilon^{\prime}}(x^{\, \infty}, x^{\, \infty}) \, \right]
		\\ [0.1in]
		= \, \sqrt{N} \, \left[ \, R_{N;x^{\, \infty}; \varepsilon^{\prime}}(x^N, x^{\, \infty}) - R_{x^{\, \infty}; \varepsilon^{\prime}}(x^{\, \infty}, x^{\, \infty}) \, \right],
		\end{array}
		\]
		almost surely.
		Notice that for any $\varepsilon^{\prime} \in  \left[ \, 0, \frac{\varepsilon}{2} \, \right]$,
		\[
		R_{N;x^{\, \infty}; \varepsilon^{\prime}}(x^N, x^{\, \infty}) \, = \,
		R_{N;x^{N;\varepsilon};\varepsilon}(x^N, x^N) \leq R_{N;x^{N;\varepsilon};\varepsilon}(x,x^{N;\varepsilon})
		\, \leq \, R_{N;x^\infty;\varepsilon^{\prime}}(x, x^N), \quad \forall \;x\in X,
		\]
		almost surely.
		This implies that $x^{N;\varepsilon} \in \displaystyle\operatornamewithlimits{argmin}_{x\in X}\,R_{N;x^{\, \infty}; \varepsilon^{\prime}}(x, x^{\, \infty})$
		almost surely.  We also know that $x^\infty \in \displaystyle\operatornamewithlimits{argmin}_{x\in X}\, R_{x^{\, \infty}; \varepsilon^{\prime}}(x, x^{\, \infty})$.
		It follows from Lemma \ref{lemma:bound for r} that  there exists a square integrable function $C(\omega)$ such that for all $\omega\in \widehat{\Omega}$,
		\[
		\left|{r}_{x^\infty; \varepsilon^{\prime}}(x^1,x^\infty;\omega) - {r}_{x^\infty;\varepsilon^{\prime}}(x^2, x^\infty;\omega)\right| \, \leq \,
		C(\omega) \, \| \, x^1 - x^2 \, \|_2,
		\]
		which shows that \cite[Condition (A2), page 164]{ShapiroDR09} holds.  In addition, since
		${r}_{x^\infty; \varepsilon^{\prime}}(x^\infty,x^\infty;\widetilde{\omega}) = \calL(x^\infty;\widetilde{\omega})$ is square integrable,
		\cite[Condition (A1), page 164]{ShapiroDR09} is satisfied.
		By applying \cite[Theorem 5.7]{ShapiroDR09} and restricting to the almost sure set, we can derive that
		\[
		\sqrt{N} \, \left[ \, \calM_N(x^N) - \calM(x^{\, \infty}) \, \right]  \, \xrightarrow{d} \, \inf_{x \in {\cal S}} \; \mathbb{G}_x,
		\]
		where $\mathbb{G}_x$ follows $\mathcal{N}\left(\,0, \mbox{Var}\left[ {r}_{x^\infty; \varepsilon^{\prime}}(x, x^\infty;\omega) \right]\,\right)$
		and $S$ is the set of  minimizers of $\displaystyle\operatornamewithlimits{minimize}_{x \in B_\delta(x^\infty)}R_{x^{\, \infty}; \varepsilon^{\prime}}(x, x^{\, \infty})$.
		Again by leveraging Lemma \ref{lemma:equiv epsilon}, we have $R_{x^{\, \infty}; \varepsilon^{\prime}}(x, x^{\, \infty}) = \calM(x)$ and
		${r}_{x^\infty; \varepsilon^{\prime}}(x, x^\infty;\omega) = \calL(x, \omega)$ almost surely for all $\|x - x^\infty\|_2 \leq \delta$.
		Then $\mbox{Var}\left[ {r}_{x^\infty; \varepsilon^{\prime}}(x, x^\infty;\omega) \right] = \mbox{Var}\left[ \calL(x, \omega) \right]$.
		Thus the first conclusion follows.  The second conclusion is obvious.
	\end{proof}
	
	\begin{remark} \rm
		If $S= \{x^{\, \infty}\}$, then we can use
		\[
		\wh{V}_N(x^N) \, \triangleq \, \displaystyle{
			\frac{1}{N}
		} \, \displaystyle{
			\sum_{n = 1}^{N}
		} \, \left[ \, \calL(x^N;\omega^n)- \displaystyle{
			\frac{1}{N}
		} \, \displaystyle{
			\sum_{n^{\prime} = 1}^N
		} \, \calL(x^N;\omega^{n^{\prime}}) \, \right]^2
		\]
		to estimate $\mbox{Var}\left[ \calL(x^\infty; \widetilde{\omega})\right]$.
		Consistency of this estimator can be demonstrated by showing the uniform convergence of $\wh{V}_N(x)$ to $\mbox{Var}\left[ \calL(x; \widetilde{\omega})\right]$
		over $x \in X$ and the continuity of $\mbox{Var}\left[ \calL(x; \widetilde{\omega})\right]$ at $x^\infty$. Then by Slutsky theorem, we can show that
		\[
		\sqrt{N \, \wh{V}_N(x^N)} \, \left( \, \calM_N(x^N) - \calM(x^{\, \infty}) \, \right) \, \xrightarrow{d} \, \mathcal{N}(0,1).
		\]
	\end{remark}
	
	\section{Convergence Rate of the Stationary Points}
	
	Throughout this section, each member of the family of random variables $\{ \omega^n \}_{n=1}^{\infty}$ is assumed to be in the probability-one set
	$\wh{\Omega}$; we also fix a scalar $\varepsilon > 0$.  For each $N$, we write $x^N$ as the shorthand for a
	composite $\varepsilon$ strong d-stationary solution $x^{N;\varepsilon}(\omega^N)$ of
	(\ref{eq:empirical composite saa}).  Assuming that $\{ x^N \}$ converges to $x^{\, \infty} \in {\cal D}$ almost surely,
	we aim to show, under the setting of Theorem~\ref{th:main} and some additional assumptions,
	the existence of a sequence of positive scalars
	$\{ \rho_N \}_{N=1}^{\infty}$ such that the sequence
	$\left\{ \, \rho_N \, \| \, x^N - x^{\infty} \, \|_2 \, \right\}$ is bounded in probability;
	that is to say, for every $\varepsilon > 0$, there exist a scalar $C_{\varepsilon} > 0$ and a positive integer $N_{\varepsilon}$ such that
	$\| \, x^N - x^{\infty} \, \| = \mbox{O}_{\IP_{\infty}}(\rho_N^{-1})$, using the big-O notation in probability theory
	\cite[Section~2.2]{van2000empirical}.  In what follows, we say that a random variable $w_N$ is $\Gamma_{\IP}(1)$ if both $w_N$ and
	$w_N^{-1}$ are $\mbox{O}_{\IP}(1)$.   Besides the almost sure convergence of $\{ x^N \}$ to $x^{\infty}$, we further assume:
	
	%
	
	\begin{assumption}\label{assu:rate}\rm
		(b1) There exist a positive scalar $q$  and a random variable $w_N = \Gamma_{\IP}(1)$ such that for all $N$ sufficiently large,
		\begin{equation*}
		{R}_{ x^{\,N};\varepsilon}( x^N, x^{\, N})- {R}_{x^{\,N};\varepsilon}( x^{\, \infty}, x^{\, N})  \geq w_N \, \|x^N - x^\infty\|_2^{\,q},
		\end{equation*}
		almost surely.
		
		\gap
		
		(b2) There exist positive scalars $\alpha < q$, $c > 0$ and $\delta > 0$ such that for all $N$ sufficiently large, there exists a function
		$\Phi_N$ for which $w \rightarrow w^{-\alpha}\,\Phi_N(w)$ is non-increasing on $(0, \delta]$ and 	
		\[
		\E\left[ \, \displaystyle{
			\sup_{x\in \mathbb{B}_{\delta}(x^{\, \infty})}
		} \, \sqrt{N} \, \left| \,R_{N;x;\varepsilon}( x^{\, \infty}, x) - R_{x;\varepsilon}( x^{\, \infty}, x) - R_{N;x;\varepsilon}(x, x) +
		R_{x;\varepsilon}(x,x) \, \right| \, \right] \,	\leq c \, \Phi_N(\delta).
		\]
		where the expectation is taken over the samples  $\left\{(\xi^1,\zbld^1), \ldots, (\xi^N,\zbld^N)\right\}$.
		
		\vskip 0.05in
		
		(b3) A sequence of positive scalars $\{\rho_N\}$ converging to $\infty$ exists such that
		$\rho_N^q \,\Phi_N\left(\rho_N^{-1}\right) \leq \sqrt{N}$.
	\end{assumption}
	
	The rate result below does not require Assumption~ \ref{ass for ULLN}.
	
	\begin{theorem}\label{thm:rate}\rm
		Assume the setting of this section, including the above Assumption~\ref{assu:rate}.
		It holds that $\|x^{\, N}-x^\infty\|_2 = \mbox{O}_{\IP_{\infty}}(\rho^{-1}_N).$
	\end{theorem}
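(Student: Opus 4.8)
The plan is to run the classical peeling (shelling) argument for rates of convergence of $M$-estimators, as in \cite[Section~3.2]{van2000empirical} and \cite[Section~5.3]{ShapiroDR09}, with the empirical stationarity of $x^N$ playing the role of the ``approximate minimizer'' hypothesis. The starting point is a basic inequality. Since $x^N$ is a composite $\varepsilon$-strong d-stationary point of \eqref{eq:empirical composite saa} and $x^\infty = \lim x^N \in X$ ($X$ being compact), \eqref{defn: new strong d} evaluated at $x = x^\infty$ together with the identity $R_{N;x^N;\varepsilon}(x^N,x^N) = \calM_N(x^N)$ established in Remark~\ref{remark:strong} gives $R_{N;x^N;\varepsilon}(x^N,x^N) - R_{N;x^N;\varepsilon}(x^\infty,x^N) \le 0$. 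Subtracting this from the growth estimate in Assumption~\ref{assu:rate}(b1) and rearranging, one obtains for all $N$ large,
\[
w_N\,\|x^N-x^\infty\|_2^{\,q} \;\le\; \Bigl| R_{N;x^N;\varepsilon}(x^\infty,x^N) - R_{x^N;\varepsilon}(x^\infty,x^N) - R_{N;x^N;\varepsilon}(x^N,x^N) + R_{x^N;\varepsilon}(x^N,x^N) \Bigr| \;\le\; \sup_{x \in \mathbb{B}_{\|x^N-x^\infty\|_2}(x^\infty)} \Delta_N(x),
\]
where $\Delta_N(x)$ abbreviates exactly the quantity inside the supremum of Assumption~\ref{assu:rate}(b2). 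Thus a power of the distance we want to bound is controlled by the oscillation near $x^\infty$ of the centered empirical process, whose modulus (b2) quantifies through $\Phi_N$.

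Next I reduce the conclusion to a tail bound. Fix $\eta>0$; it suffices to find $M$ and $N_0$ with $\IP_\infty(\rho_N\|x^N-x^\infty\|_2 > 2^M)<\eta$ for $N\ge N_0$. Since $w_N = \Gamma_{\IP}(1)$, pick $a>0$ so that $\IP_\infty(w_N<a)<\eta/3$ for all $N$; since $x^N\to x^\infty$ almost surely, pick $N_0$ so that $\IP_\infty(\|x^N-x^\infty\|_2>\delta)<\eta/3$ for $N\ge N_0$, $\delta$ being the radius from (b2). On the complementary event I partition $\{\rho_N\|x^N-x^\infty\|_2>2^M\}$ into shells $S_{N,j} \triangleq \{\,2^{j-1}<\rho_N\|x^N-x^\infty\|_2\le 2^j\,\}$ for integers $j>M$; because $\|x^N-x^\infty\|_2\le\delta$, every shell that occurs has radius at most $\delta$ (shrinking $\delta$ by a harmless constant factor if needed so that $2^j/\rho_N\le\delta$ on all occurring shells). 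On $S_{N,j}\cap\{w_N\ge a\}$ the basic inequality forces $\sup_{\|x-x^\infty\|_2\le 2^j/\rho_N}\Delta_N(x) \ge a\,(2^{j-1}/\rho_N)^q$, so Markov's inequality and the modulus bound (b2), read at radius $2^j/\rho_N$, give $\IP_\infty\!\left(S_{N,j}\cap\{w_N\ge a\}\right) \le c\,\Phi_N(2^j/\rho_N)\,\bigl(\sqrt{N}\,a\,(2^{j-1}/\rho_N)^q\bigr)^{-1}$.

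Now I collapse the sum over shells. Using that $w\mapsto w^{-\alpha}\Phi_N(w)$ is non-increasing on $(0,\delta]$ and $2^j/\rho_N\ge 1/\rho_N$, one gets $\Phi_N(2^j/\rho_N)\le 2^{j\alpha}\Phi_N(1/\rho_N)$; then Assumption~\ref{assu:rate}(b3), $\rho_N^q\Phi_N(\rho_N^{-1})\le\sqrt N$, yields $\IP_\infty(S_{N,j}\cap\{w_N\ge a\})\le (c\,2^q/a)\,2^{j(\alpha-q)}$. Since $\alpha<q$, summing over $j>M$ produces the bound $(c\,2^q/a)\,2^{(M+1)(\alpha-q)}/(1-2^{\alpha-q})$, which is $<\eta/3$ once $M$ is large, independently of $N$. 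Adding the three contributions shows $\IP_\infty(\rho_N\|x^N-x^\infty\|_2>2^M)<\eta$ for $N\ge N_0$; as $\eta>0$ was arbitrary, $\rho_N\|x^N-x^\infty\|_2 = \mbox{O}_{\IP_\infty}(1)$, i.e. $\|x^N-x^\infty\|_2 = \mbox{O}_{\IP_\infty}(\rho_N^{-1})$.

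The routine parts are the basic inequality, Markov's inequality, and the two ``bad event'' estimates for $w_N$ small and $\|x^N-x^\infty\|_2$ large. The delicate part is the exponent bookkeeping in the last two steps: one must keep every shell radius inside $(0,\delta]$ so that the modulus hypothesis (b2) applies, and then align the powers of $2$ so that (b3) exactly absorbs the $\sqrt N$ while the monotonicity of $w^{-\alpha}\Phi_N(w)$ turns the shell probabilities into a genuinely summable geometric series — this matching of $\alpha<q$, the $\rho_N^q\Phi_N(\rho_N^{-1})\le\sqrt N$ scaling, and the dyadic shell structure is the heart of the argument.
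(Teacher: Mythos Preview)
Your proposal is correct and follows essentially the same peeling argument as the paper's own proof: derive the basic inequality by combining the $\varepsilon$-strong d-stationarity of $x^N$ (evaluated at $x^\infty$) with the growth condition (b1), partition into dyadic shells $\{2^{j}\lesssim \rho_N\|x^N-x^\infty\|_2\lesssim 2^{j+1}\}$, apply Markov's inequality together with the modulus bound (b2) on each shell, use the monotonicity of $w^{-\alpha}\Phi_N(w)$ and the scaling condition (b3) to obtain a summable geometric tail in $j$, and absorb the events $\{w_N<K_1\}$ and $\{\|x^N-x^\infty\|_2>\delta\}$ separately. The only cosmetic differences are your shell indexing convention and your being slightly more explicit about reading (b2) at the variable radius $2^j/\rho_N$.
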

	
	\begin{proof}
		From Lemma \ref{lemma:d-stat} that $x^N\in \displaystyle\operatornamewithlimits{argmin}_{x\in X}\, R_{N;x^N;\varepsilon}(x, x^N)$ for any $N\geq 1$.
		We have
		\begin{equation}\label{eq in 7.2}
		\begin{array}{ll}
		0 & \leq \, R_{N;x^N;\varepsilon}(x^{\infty},x^N) - R_{N;x^N;\varepsilon}(x^N,x^N) \\ [0.15in]
		& = \, \left[ \, R_{N;x^N;\varepsilon}(x^{\infty},x^N) - R_{x^N;\varepsilon}(x^{\infty},x^N) \, \right] -
		\left[ \, R_{N;x^N;\varepsilon}(x^N,x^N) - R_{x^N;\varepsilon}(x^N,x^N) \, \right]	+ \\ [0.15in]
		& \hspace{0.18in} \left[ \, R_{x^N;\varepsilon}(x^{\infty},x^N) - R_{x^N;\varepsilon}(x^N,x^N) \, \right].
		\end{array}
		\end{equation}
		For any positive integer $j$ and the given positive scalar $\delta$ in (b2), we define a set $S_{N,\,j}$ as
		\begin{equation*}
		S_{N,\,j} \triangleq \left\{x \in X \; | \; 2^{\, j} < \rho_N\, \|x - x^{\, \infty}\|_2 \leq \min(\,2^{\, j+1}, \delta \rho_N \,)  \right\}.
		\end{equation*}
		If $x^{\, N} \in S_{N,\,j}$, restricting to the almost sure set in Assumption \ref{assu:rate}~(b1) if necessary, we have
		\[ \begin{array}{l}
		\underbrace{ \displaystyle\sup_{x\in \mathbb{B}_{\delta}(x^{\, \infty})}\left| \, R_{N;x;\varepsilon}(x^{\, \infty},x) - R_{x;\varepsilon}(x^{\, \infty},x) -
			R_{N;x;\varepsilon}(x,x) + R_{x;\varepsilon}(x,x) \, \right|}_{\mbox{denoted RHS$_{N,j}$}} \\[0.5in]
		\geq \, R_{x^N;\varepsilon}(x^N,x^N) -  R_{x^N;\varepsilon}(x^{\, \infty},x^N)
		\, \geq \, w_N \, \|x^N - x^{\, \infty}\|_2^q \, \geq \, w_N \, \left( \, 2^{\,j}\rho_N^{-1} \, \right)^{q}
		\end{array}
		\]
		where the two inequalities are by Assumption \ref{assu:rate}~(b1) and (b2), respectively.
		In the rest of the proof, the probabilities are all $\IP_{\infty}$.  For simplicity, we drop the subscript $\infty$.
		Thus for some constant $K_1$,
		\[\begin{array}{l}
		\IP\left(\, x^{\, N} \in S_{N,\,j}, \; \displaystyle w_N \geq K_1\,\right)
		\, \leq \, \IP\left(\mbox{RHS$_{N,j}$}
		\geq \, K_1 \, \left(\, 2^{\,j}\rho_N^{-1}\right)^{q} \, \right) \\ [0.15in]
		\leq \, \E\left[K^{-1}_1(2^{-j}\rho_N)^q  \;\mbox{RHS$_{N,j}$}
		\, \right] \epc \mbox{by Markov inequality} \\ [0.15in]
		\leq \, \displaystyle\frac{c\,\Phi_N\left(2^{\, j} \, \rho_N^{-1}\right) \rho_N^{q}}{K_1\sqrt{N}2^{\,j\,q}} \leq \displaystyle\frac{c\,2^{\,(\alpha-q)\,j}\, \Phi_N\left(\rho_N^{-1}\right) \rho_N^q}{K_1\sqrt{N}}
		\epc \mbox{by Assumption \ref{assu:rate}~(b2)}.
		\end{array}
		\]
		Therefore, given any positive integer $M$, we have that for all $N$ sufficiently large,
		\[
		\begin{array}{l}
		\IP\left(\, \rho_N\,\|x^{\,N} - x^{\,\infty}\|_2 > 2^{\,M}\,\right) \, \leq \, \IP\left(\, w_N < K_1\,\right) + \\ [0.1in]
		\hspace{0.5in} \IP\left(\, \rho_N\,\|x^{\,N} - x^{\,\infty}\|_2 > 2^{\,M}, \,\|x^{\,N} - x^{\,\infty}\|_2 \leq \delta, \, w_N \geq K_1\,\right)
		+ \IP\left(\, \|x^{\,N} - x^{\,\infty}\|_2 > \delta\,\right)
		\\ [0.1in]
		\leq \, \displaystyle{
			\sum_{j \geq M}
		} \, \IP\left(\, x^{\, N} \in S_{N,\,j}, w_N \geq K_1 \,\right) + \IP\left(\, \|x^{\,N} - x^{\,\infty}\|_2 > \delta\,\right) +\IP\left(\, w_N < K_1\,\right) \\ [0.2in]
		\leq \, \displaystyle\frac{c\,\rho_N^q \Phi_N\left(\rho_N^{-1}\right)}{K_1\sqrt{N}}  \,\sum_{j \geq M} 2^{\,(\alpha-q)\,j} + \IP\left(\, \|x^{\,N} - x^{\,\infty}\|_2 > \delta\,\right) + \IP\left(\, w_N<  K_1\,\right) \\ [0.2in]
		\leq \, \displaystyle{
			\frac{c}{K_1}
		} \, \displaystyle{
			\sum_{j \geq M}
		} \, 2^{\, (\alpha-q)j} + \IP\left(\, \|x^{\,N} - x^{\,\infty}\|_2 > \delta\,\right) + \IP\left(\, w_N< K_1\,\right) \epc \mbox{by Assumption \ref{assu:rate}~(b3)}.
		\end{array}
		\]
		One can thus make $\IP\left(\,\rho_N\,\|x^{\,N} - x^{\,\infty}\|_2 > 2^{\,M}\,\right)$
		arbitrarily small by choosing $M$ and $N$ sufficiently large and $K_1$ sufficiently small accordingly.
	\end{proof}
	
	Next, we provide sufficient conditions for Assumption~(b1) to hold.
	
	\begin{proposition}\label{pr:strong local}\rm
		Suppose that Assumption \ref{assu: PA} holds.
		Then assumption (b1) holds with $q =2$ if for some $\varepsilon^{\prime} \in \left[ \, 0, \frac{\varepsilon}{2} \right]$,
		$R_{x^\infty;\varepsilon^{\prime}}(\,\bullet\,,x^\infty)$ is locally strongly convex at
		$x^\infty$, i.e., there exist positive scalars $\delta$ and $c$ such that,
		\[
		R_{x^\infty;\varepsilon^{\prime}}(x,x^\infty)  - R_{x^\infty;\varepsilon^{\prime}}(x^\infty,x^\infty) \, \geq c\,\|x - x^\infty\|_2^2,
		\quad \forall\; x \, \in \, \mathbb{B}_\delta(x^\infty).
		\]	
	\end{proposition}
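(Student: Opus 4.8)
The plan is to reduce assumption~(b1) to the assumed local strong convexity of $R_{x^\infty;\varepsilon^{\prime}}(\bullet,x^\infty)$ by exploiting the piecewise‑affine structure together with Lemma~\ref{lemma:equiv epsilon}, which will let me rewrite both $R$‑quantities appearing in~(b1) as ordinary values of $\mathcal{M}$. Since $\{x^N\}$ converges to $x^\infty$ almost surely, on the probability‑one set where $\{x^N\}\to x^\infty$ there is, for each sample path, an index $N_0$ such that $\|x^N-x^\infty\|_2\le\delta$ for all $N\ge N_0$, where $\delta>0$ is fixed small enough to serve simultaneously in Lemma~\ref{lm:set inclusion} (for the parameter $\varepsilon/2$), in Lemma~\ref{lemma:equiv epsilon} (for $\varepsilon$), and in the strong‑convexity hypothesis. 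Fix any such $N\ge N_0$.

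First I handle the term $R_{x^N;\varepsilon}(x^N,x^N)$: by the computation in Remark~\ref{remark:strong} this equals $\mathcal{M}(x^N)$, and then Lemma~\ref{lemma:equiv epsilon}, applied with $\bar x=x^\infty$, $x=x^N$ (admissible since $\|x^N-x^\infty\|_2\le\delta$), gives $\mathcal{M}(x^N)=R_{x^\infty;\varepsilon^{\prime}}(x^N,x^\infty)$; trivially $\mathcal{M}(x^\infty)=R_{x^\infty;\varepsilon^{\prime}}(x^\infty,x^\infty)$. Next I claim $R_{x^N;\varepsilon}(x^\infty,x^N)=\mathcal{M}(x^\infty)$. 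Under Assumption~\ref{assu: PA} the linearization of each affine $f_j(\bullet;\xi)$ or $g_j(\bullet;\xi)$ is the function itself, so for every $\omega\in\wh{\Omega}$ one has $r^{\uparrow}_{x^N;\varepsilon}(x^\infty,x^N;\omega)=h^{\uparrow}\bigl(f(x^\infty;\xi)-\max_{j\in\mathcal{A}_{g;\varepsilon}(x^N;\xi)}g_j(x^\infty;\xi);\zbld\bigr)$, and similarly $r^{\downarrow}_{x^N;\varepsilon}(x^\infty,x^N;\omega)$ involves $\max_{j\in\mathcal{A}_{f;\varepsilon}(x^N;\xi)}f_j(x^\infty;\xi)$. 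Applying Lemma~\ref{lm:set inclusion} with parameter $\varepsilon/2$, base points $x^1=x^\infty$, $x^2=x^N$, and $\varepsilon^{\prime}=0$ yields $\mathcal{A}_{g}(x^\infty;\xi)\subseteq\mathcal{A}_{g;\varepsilon}(x^N;\xi)$ and $\mathcal{A}_{f}(x^\infty;\xi)\subseteq\mathcal{A}_{f;\varepsilon}(x^N;\xi)$ for $\xi$ in a probability‑one set. Since these $\varepsilon$‑argmax sets also lie inside the full index sets and $g(x^\infty;\xi)=\max_{j\in\mathcal{A}_g(x^\infty;\xi)}g_j(x^\infty;\xi)$, the two inner maxima are squeezed to equal $g(x^\infty;\xi)$ and $f(x^\infty;\xi)$ respectively, so that $r_{x^N;\varepsilon}(x^\infty,x^N;\omega)=h\bigl(m(x^\infty;\xi);\zbld\bigr)=\mathcal{L}(x^\infty;\omega)$; integrating over $\wt{\omega}$ gives $R_{x^N;\varepsilon}(x^\infty,x^N)=\mathcal{M}(x^\infty)$. (Alternatively, the inequality $R_{x^N;\varepsilon}(x^\infty,x^N)\le\mathcal{M}(x^\infty)$—all that is truly needed—follows more cheaply from the monotonicity of $r_{\bar x;\varepsilon}$ in $\varepsilon$ and Lemma~\ref{lemma:equiv epsilon} at level $\varepsilon/2$.)

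Putting the pieces together with the strong‑convexity hypothesis,
\[
R_{x^N;\varepsilon}(x^N,x^N)-R_{x^N;\varepsilon}(x^\infty,x^N)=\mathcal{M}(x^N)-\mathcal{M}(x^\infty)=R_{x^\infty;\varepsilon^{\prime}}(x^N,x^\infty)-R_{x^\infty;\varepsilon^{\prime}}(x^\infty,x^\infty)\ge c\,\|x^N-x^\infty\|_2^2
\]
for all $N\ge N_0$, so assumption~(b1) holds with $q=2$ and with the deterministic choice $w_N\equiv c$, which is plainly $\Gamma_{\IP}(1)$.

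I expect the only delicate point to be the term $R_{x^N;\varepsilon}(x^\infty,x^N)$: the level $\varepsilon$ that enters it in~(b1) is larger than the level $\varepsilon^{\prime}\le\varepsilon/2$ for which Lemma~\ref{lemma:equiv epsilon} forces the $R$‑functionals to collapse, and the base point of the relevant $\varepsilon$‑argmax index sets is $x^N$ rather than $x^\infty$. Bridging this gap is exactly what the set‑inclusion estimate of Lemma~\ref{lm:set inclusion} provides once $\|x^N-x^\infty\|_2\le\delta$, which is where the uniform Lipschitz bound $c_0$ of Assumption~\ref{ass for ULLN}(a1) enters; the rest of the argument is routine bookkeeping.
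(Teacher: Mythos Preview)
Your proof is correct and follows essentially the same approach as the paper: both use Lemma~\ref{lemma:equiv epsilon} to identify $R_{x^N;\varepsilon}(x^N,x^N)$ with $R_{x^\infty;\varepsilon'}(x^N,x^\infty)$, bound $R_{x^N;\varepsilon}(x^\infty,x^N)$ above by $R_{x^\infty;\varepsilon'}(x^\infty,x^\infty)=\mathcal{M}(x^\infty)$ via the set inclusions of Lemma~\ref{lm:set inclusion}, and then invoke the local strong-convexity hypothesis. Your squeeze argument in fact establishes the equality $R_{x^N;\varepsilon}(x^\infty,x^N)=\mathcal{M}(x^\infty)$, which is slightly sharper than the inequality the paper uses, but either suffices.
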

	
	\begin{proof}
		It follows from Lemma \ref{lemma:equiv epsilon} that for all $N$ sufficiently large.
		\[
		R_{ x^{\,\infty};\varepsilon^{\prime}}( x^N, x^{\, \infty}) \, = \, {R}_{ x^{\,N};\varepsilon}( x^N, x^{\, N})
		\epc \mbox{and} \epc
		R_{N; x^{\,\infty};\varepsilon^{\prime}}( x^N, x^{\, \infty}) = {R}_{N; x^{\,N};\varepsilon}( x^N, x^{\, N}) \epc \mbox{almost surely}
		\]
		Thus we can show that
		\[
		\begin{aligned}
		R_{ x^{\,N};\varepsilon}( x^N, x^{\, N})- {R}_{x^{\,N};\varepsilon}( x^{\, \infty}, x^{\, N})
		\geq &  \;	{R}_{ x^{\,\infty};\varepsilon^{\prime}}( x^N, x^{\, \infty}) - 	{R}_{ x^{\,\infty};\varepsilon^{\prime}}( x^N, x^{\, N}) \\[0.1in]
		= &  \;	{R}_{ x^{\,\infty};\varepsilon^{\prime}}( x^N, x^{\, \infty}) - 	{R}_{ x^{\,\infty};\varepsilon^{\prime}}( x^{\, \infty}, x^{\, \infty})
		\geq  c \, \|x^{\, N}- x^{\, \infty}\|_2^{2},
		\end{aligned}
		\]
		almost surely, where the last inequality is obtained by the assumed local strong convexity of $R_{x^\infty;\varepsilon^{\prime}}(\,\bullet\,,x^\infty)$ at $x^\infty$.
	\end{proof}
	
	\begin{remark}\rm
		By Theorem \ref{th:main},
		$x^\infty$ is a minimizer of  $R_{x^\infty;\varepsilon^{\prime}}(\bullet,x^\infty)$ for any $\varepsilon^{\prime} \in \left[ \, 0, \frac{\varepsilon}{2} \right]$.
		Thus the assumption in Proposition~\ref{pr:strong local} is a mild strengthening of this minimizing property of $x^{\infty}$.
	\end{remark}
	
	If each $f_j(\,\bullet\,;\xi)$ and $g_j(\,\bullet\,; \xi)$ are affine functions, based on the proof of Proposition~\ref{pr:strong local},
	we can show that in the equation \eqref{eq in 7.2},
	\[
	\begin{aligned}
	&\left[ R_{N;x^N;\varepsilon}(x^{\infty},x^N) - R_{x^N;\varepsilon}(x^{\infty},x^N) \, \right] - \left[ \, R_{N;x^N;\varepsilon}(x^N,x^N) - R_{x^N;\varepsilon}(x^N,x^N) \right ]\\[0.1in]
	\leq & \left[ {R}_{N; x^{\,\infty};\varepsilon^{\prime}}(x^{\, \infty}, x^{\, \infty}) - R_{x^N;\varepsilon}(x^{\infty},x^N) \, \right] - \left[ \, {R}_{N; x^{\,\infty};\varepsilon^{\prime}}( x^N, x^{\, \infty}) - {R}_{ x^{\,\infty};\varepsilon^{\prime}}( x^N, x^{\, \infty}) \right ] \\[0.1in]
	\leq & \left[ {R}_{N; x^{\,\infty};\varepsilon^{\prime}}(x^{\, \infty}, x^{\, \infty}) - R_{x^\infty;2\varepsilon}(x^{\infty},x^\infty) \, \right] - \left[ \, {R}_{N; x^{\,\infty};\varepsilon^{\prime}}( x^N, x^{\, \infty}) - {R}_{ x^{\,\infty};\varepsilon^{\prime}}( x^N, x^{\, \infty}) \right ] \\[0.1in]
	= & \left[ {R}_{N; x^{\,\infty};\varepsilon^{\prime}}(x^{\, \infty}, x^{\, \infty}) - R_{x^\infty;\varepsilon^{\prime}}(x^{\infty},x^\infty) \, \right] - \left[ \, {R}_{N; x^{\,\infty};\varepsilon^{\prime}}( x^N, x^{\, \infty}) - {R}_{ x^{\,\infty};\varepsilon^{\prime}}( x^N, x^{\, \infty}) \right],
	\end{aligned}
	\]
	almost surely,
	for all $N$ sufficiently large and any $\varepsilon^\prime \in [0, \frac{\varepsilon}{2}]$.  Again, the almost sure set does not depend on $\varepsilon$
	and parameters $x^N$ and $x^\infty$.  We can thus replace Assumption \ref{assu:rate}~(b2) by the following one so that Theorem \ref{thm:rate} still holds.
	
	\vskip 0.05in
	
	(b2$^{\, \prime}$) Assume that each $f_j(\,\bullet\,;\xi)$ and $g_j(\,\bullet\,; \xi)$ are affine functions. There exist positive scalars $\alpha < q$, $c > 0$ and $\delta > 0$
	such that for all $N$ sufficiently large, there exists a function $\Phi_N$ for which $w \rightarrow w^{-\alpha}\,\Phi_N(w)$ is non-increasing on $(0, \delta]$ and 	
	\begin{equation*}
	\begin{array}{ll}
	\E\left[\displaystyle\sup_{x \in \mathbb{B}_{\delta}(x^{\, \infty})}\sqrt{N}\left| \,{R}_{N; x^\infty;\varepsilon^{\prime}}( x^{\, \infty}, x^{\, \infty}) - {R}_{x^\infty;\varepsilon^{\prime}}( x^{\, \infty}, x^{\, \infty}) -{R}_{N;x^\infty;\varepsilon^{\prime}}(x, x^{\, \infty}) + {R}_{x^\infty;\varepsilon^{\prime}}(x, x^{\, \infty}) \,\right| \, \right]	\\[0.3in]
	\leq   c\, \Phi_N(\delta),
	\end{array}
	\end{equation*}
	for all $\varepsilon^{\prime} \in \left( \, 0, \frac{\varepsilon}{2} \, \right]$, where the expectation is taken over the samples
	$\left\{(\xi^1,\zbld^1), \ldots, (\xi^N,\zbld^N)\right\}$.
	
	\gap
	
	The following corollary does not require a proof.
	
	\begin{corollary}\label{coro: b3'} \rm
		Assume the setting of this section and Assumptions \ref{assu:rate}~(b1), (b2$^{\, \prime}$), and (b3) hold.
		It holds that $\| x^N - x^\infty\|_2 = \mbox{O}_{\IP_{\infty}}(\rho_N^{-1})$.
	\end{corollary}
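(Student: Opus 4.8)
The statement is a transcription of Theorem~\ref{thm:rate} in which Assumption~\ref{assu:rate}~(b2) is traded for (b2$^{\,\prime}$), and the plan is to rerun that proof with the single change that the affine structure permits. Recall that just above the corollary it is shown, on one fixed almost-sure event that does not depend on $\varepsilon$, on $\varepsilon^{\prime}\in[\,0,\tfrac{\varepsilon}{2}\,]$, or on the (random) points $x^N$ and $x^{\infty}$, that the first two bracketed differences in \eqref{eq in 7.2} are bounded above by
\[
D_N \, \triangleq \, \left[ R_{N; x^{\infty};\varepsilon^{\prime}}(x^{\infty}, x^{\infty}) - R_{x^{\infty};\varepsilon^{\prime}}(x^{\infty}, x^{\infty}) \right] - \left[ R_{N; x^{\infty};\varepsilon^{\prime}}(x^N, x^{\infty}) - R_{x^{\infty};\varepsilon^{\prime}}(x^N, x^{\infty}) \right].
\]
Combining this with \eqref{eq in 7.2} (whose third bracketed term is $\le -\,w_N\|x^N-x^\infty\|_2^q$ by Assumption~\ref{assu:rate}~(b1)) gives, for all $N$ large and almost surely, the basic inequality $w_N\,\|x^N - x^\infty\|_2^{\,q} \le D_N$. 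This plays the role that ``$w_N\|x^N-x^\infty\|_2^q \le \mathrm{RHS}_{N,j}$'' plays in the proof of Theorem~\ref{thm:rate}.

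Next I would set up the identical dyadic peeling. For a positive integer $j$ and the $\delta$ of (b2$^{\,\prime}$), put $S_{N,j} \triangleq \{\, x\in X \mid 2^{j} < \rho_N\|x-x^\infty\|_2 \le \min(2^{j+1},\delta\rho_N)\,\}$. On the event $\{x^N\in S_{N,j}\}$ one has $\|x^N-x^\infty\|_2\le\delta$, so
\[
D_N \, \le \, \sup_{x\in\mathbb{B}_{\delta}(x^\infty)}\left| R_{N; x^{\infty};\varepsilon^{\prime}}(x^{\infty}, x^{\infty}) - R_{x^{\infty};\varepsilon^{\prime}}(x^{\infty}, x^{\infty}) - R_{N; x^{\infty};\varepsilon^{\prime}}(x, x^{\infty}) + R_{x^{\infty};\varepsilon^{\prime}}(x, x^{\infty})\right|,
\]
which (after inserting the $\sqrt{N}$ factor) is exactly the empirical-process quantity whose expectation (b2$^{\,\prime}$) bounds by $c\,\Phi_N(\delta)$. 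Then, mimicking the proof of Theorem~\ref{thm:rate} verbatim, from $w_N\|x^N-x^\infty\|_2^q\le D_N$, the bound $\|x^N-x^\infty\|_2>2^j\rho_N^{-1}$ on $S_{N,j}$, Markov's inequality, and the monotonicity of $w\mapsto w^{-\alpha}\Phi_N(w)$ on $(0,\delta]$, I obtain for a constant $K_1$
\[
\IP_{\infty}\!\left(x^N\in S_{N,j},\ w_N\ge K_1\right)\, \le\, \frac{c\,2^{(\alpha-q)j}\,\Phi_N(\rho_N^{-1})\,\rho_N^{\,q}}{K_1\sqrt{N}}.
\]

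The conclusion is then reached exactly as in Theorem~\ref{thm:rate}: for a positive integer $M$, decompose $\IP_\infty(\rho_N\|x^N-x^\infty\|_2>2^M)$ into $\sum_{j\ge M}\IP_\infty(x^N\in S_{N,j},\,w_N\ge K_1)$ plus $\IP_\infty(\|x^N-x^\infty\|_2>\delta)$ plus $\IP_\infty(w_N<K_1)$; use Assumption~\ref{assu:rate}~(b3) to replace $\rho_N^q\Phi_N(\rho_N^{-1})/\sqrt{N}$ by $1$, leaving the convergent geometric tail $\sum_{j\ge M}2^{(\alpha-q)j}$; use the hypothesized almost sure convergence $x^N\to x^\infty$ to send $\IP_\infty(\|x^N-x^\infty\|_2>\delta)\to 0$; and use $w_N=\Gamma_{\IP}(1)$ to make $\IP_\infty(w_N<K_1)$ small by shrinking $K_1$. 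Choosing $M$ large, then $N$ large, then $K_1$ small makes $\IP_\infty(\rho_N\|x^N-x^\infty\|_2>2^M)$ arbitrarily small, i.e.\ $\|x^N-x^\infty\|_2=\mathrm{O}_{\IP_\infty}(\rho_N^{-1})$.

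The only point deserving care — and the reason (b2$^{\,\prime}$) is the right substitute for (b2) here — is the very first step: one must know the displayed chain bounding the \eqref{eq in 7.2}-terms by $D_N$ holds on a \emph{single} almost-sure event, uniformly in $\varepsilon^{\prime}$ and in the random $x^N$, $x^\infty$, so that the ensuing probability estimates are legitimate. This is precisely the assertion established just before the corollary, via Lemma~\ref{lemma:equiv epsilon} (which, in the affine case, identifies $R_{x^N;\varepsilon}(x^N,x^N)$ with $R_{x^\infty;\varepsilon^{\prime}}(x^N,x^\infty)$ for $N$ large) together with affineness of the $f_j,g_j$; once this is in hand everything else is a routine transcription of Theorem~\ref{thm:rate}, which is why the corollary is stated without proof.
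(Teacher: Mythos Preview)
Your proposal is correct and follows exactly the approach the paper intends: the paper states this corollary ``does not require a proof'' precisely because the displayed chain of inequalities immediately preceding it (derived via Lemma~\ref{lemma:equiv epsilon} and the affine structure) shows that the first two bracketed terms in \eqref{eq in 7.2} are dominated by the quantity you call $D_N$, after which the proof of Theorem~\ref{thm:rate} goes through verbatim with (b2$^{\,\prime}$) in place of (b2). Your careful remark about the almost-sure event being independent of $\varepsilon$, $\varepsilon^{\prime}$, $x^N$, and $x^\infty$ matches the paper's own comment on this point.
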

	
	%
	
	
	An advantage of assuming (b2$^{\, \prime}$) is that we can derive a sufficient condition for it to hold.
	This condition is based on the concept of bracketing number in asymptotic statistics \cite{van1998asymptotic}
	to measure the size of some function class $\calF$.
	We mainly consider the bracketing number relative to the $L_2(\mathbb{P})$-norm.  Given two functions $\ell$ and $u$, the bracket $[\ell, u]$ is the set of all
	functions $f$ with $\ell \leq f \leq u$.  A $\sigma$-bracket in $L_2(\mathbb{P})$ is a bracket $[\ell,u]$ with $\| \ell - u \|_2 \leq \sigma$.
	The bracketing number $\calN_{[\;]}(\sigma, \calF, L_2(\mathbb{P}))$ is the minimum number of $\sigma$-brackets needed to cover $\calF$.
	For the bracketing number relative to $\ell_2$ norm in Euclidean space, the definition can be adapted similarly.  In the following, we cite
	an important lemma, without proof, that is useful to obtain the bound in Assumption~(b2$^{\, \prime}$).
	
	\begin{lemma}\rm (c.f. \cite[Corollay 19.35]{van2000empirical})\label{lemma: entropy bound}
		For any class $\calF$ of measurable functions $f: \Omega \mapsto \mathbb{R}$ with envelope function
		$F \triangleq \sup_{f \in \calF}\left|f\right|$, there exists a positive scalar $K_1$ such that
		\[
		\sqrt{N}\,\E\left[\,\sup_{f \in \calF}\left| \, \frac{1}{N}\sum_{n=1}^N f(\omega_n) - \E\left[f(\omega)\right] \, \right| \; \right]
		\, \leq \, K_1 \, \int_{0}^{\|F\|_2} \sqrt{\log \calN_{[\;]}(\sigma, \calF, L_2(\mathbb{P}))}d\sigma.
		\]
	\end{lemma}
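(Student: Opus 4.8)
The plan is to recognize this as a classical maximal inequality of empirical process theory and to establish it by a bracketing chaining argument combined with Bernstein's inequality; I only sketch the main steps. Write $\mathbb{G}_N f \triangleq \sqrt N\bigl(\tfrac1N\sum_{n=1}^N f(\omega_n) - \E[f(\omega)]\bigr)$ for the empirical process indexed by $f \in \calF$, so the left-hand side is $\E\bigl[\sup_{f\in\calF}|\mathbb{G}_N f|\bigr]$; we may assume $\|F\|_2 < \infty$, as otherwise the right-hand side is infinite. For each integer $q \ge 0$ set $\sigma_q \triangleq 2^{-q}\|F\|_2$, and for every $q$ choose a minimal cover of $\calF$ by $\sigma_q$-brackets $\{[\ell_{qi},u_{qi}]\}_{i=1}^{N_q}$ with $N_q \triangleq \calN_{[\;]}(\sigma_q,\calF,L_2(\mathbb{P}))$. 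Intersecting brackets across levels, I would arrange the induced partitions $\calF=\bigcup_i\calF_{qi}$ to be nested as $q$ grows, and for $f\in\calF_{qi}$ put $\pi_q f\triangleq\ell_{qi}$ and $\Delta_q f\triangleq u_{qi}-\ell_{qi}\ge 0$, so that $\|\Delta_q f\|_2\le\sigma_q$ and $|f-\pi_q f|\le\Delta_q f$ pointwise, while $f\mapsto(\pi_q f,\pi_{q-1}f)$ takes at most $N_q$ values by nestedness.

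Next I would fix a depth $Q=Q(N)$ with $\sigma_Q$ of order $N^{-1}\|F\|_2$, let $q_0$ be the least index with $N_{q_0}=1$ (one bracket $[-F,F]$ suffices once $\sigma_q\ge 2\|F\|_2$), and telescope
\[
\mathbb{G}_N f \,=\, \mathbb{G}_N(\pi_{q_0}f) \,+\, \sum_{q=q_0+1}^{Q}\mathbb{G}_N(\pi_q f-\pi_{q-1}f) \,+\, \mathbb{G}_N(f-\pi_Q f).
\]
The first term is a single random variable with $\E|\mathbb{G}_N(\pi_{q_0}f)|\le\|F\|_2$, absorbed into the integral. For each intermediate link I would split $\pi_q f-\pi_{q-1}f$ according to whether $\Delta_{q-1}f+\Delta_q f\le a_q$ or not, with $a_q\asymp\sqrt N\,\sigma_{q-1}/\sqrt{1+\log N_q}$. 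On the truncated part, Bernstein's inequality supplies a sub-Gaussian proxy $\lesssim\sigma_{q-1}^2$ and a sub-exponential scale $\lesssim a_q/\sqrt N$, so the maximal inequality over the at most $N_q$ relevant functions contributes $\lesssim\sigma_{q-1}\sqrt{1+\log N_q}+(a_q/\sqrt N)(1+\log N_q)\asymp\sigma_{q-1}\sqrt{1+\log N_q}$. On the complementary event, $\E|\mathbb{G}_N g|\le 2\sqrt N\,\E|g|\le 2\sqrt N\,\|\Delta_{q-1}f+\Delta_q f\|_2^2/a_q\asymp\sigma_{q-1}\sqrt{1+\log N_q}$ by Markov's inequality and $\|\Delta_q f\|_2\le\sigma_q$. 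Finally $|\mathbb{G}_N(f-\pi_Q f)|\le\sqrt N\,(\mathbb{P}_N+\mathbb{P})\Delta_Q f$ has expectation $\le 2\sqrt N\,\sigma_Q=\mathrm{O}(N^{-1/2}\|F\|_2)$, which is negligible.

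Summing over $q_0<q\le Q$ then gives $\E\bigl[\sup_f|\mathbb{G}_N f|\bigr]\lesssim\sum_{q>q_0}\sigma_{q-1}\sqrt{1+\log N_q}$ uniformly in $N$, and since $\sigma\mapsto\log\calN_{[\;]}(\sigma,\calF,L_2(\mathbb{P}))$ is non-increasing, each summand is comparable to $\int_{\sigma_{q+1}}^{\sigma_q}\sqrt{1+\log\calN_{[\;]}(\sigma,\calF,L_2(\mathbb{P}))}\,d\sigma$; adding these telescopes to the asserted bound $K_1\int_0^{\|F\|_2}\sqrt{\log\calN_{[\;]}(\sigma,\calF,L_2(\mathbb{P}))}\,d\sigma$ for a universal constant $K_1$ (dropping the additive $1$ under the root, as is standard when the class is nondegenerate). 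I expect the main obstacle to be the truncation bookkeeping in the middle step — choosing a single family of levels $a_q$ that simultaneously balances the sub-Gaussian term $\sigma_{q-1}\sqrt{\log N_q}$, the sub-exponential term $(a_q/\sqrt N)\log N_q$, and the $L_1$-tail term $\sqrt N\,\sigma_{q-1}^2/a_q$ across all scales while keeping the finite maximal inequality applicable — together with the routine but necessary measurability caveats that make $\E$ an outer expectation; all of this is carried out exactly as in the proof of the cited corollary in \cite{van2000empirical}.
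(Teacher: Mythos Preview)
The paper does not supply its own proof of this lemma: it is stated with the citation ``(c.f.\ \cite[Corollary 19.35]{van2000empirical})'' and invoked as a known result from empirical process theory, with no argument given. Your sketch is the standard bracketing chaining proof underlying that corollary (telescoping along a dyadic scale of bracket covers, Bernstein-based maximal inequality on the truncated links, Markov on the tails), so there is nothing to compare against in the paper itself; your approach is exactly the route taken in the cited reference.
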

	
	\begin{proposition}\label{entropy for lip}\rm
		If Assumption \ref{ass for ULLN} holds,
		then Assumption (b2$^{\, \prime}$) holds with $\Phi_N(\delta) = \delta$.
	\end{proposition}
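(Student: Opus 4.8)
The plan is to rewrite the quantity bounded in (b2$^{\,\prime}$) as the expected supremum of a \emph{fixed} (sample-size independent) localized empirical process and to control it through the bracketing-entropy integral of Lemma~\ref{lemma: entropy bound}. Write $\IP_N f \triangleq \frac1N\sum_{n=1}^N f(\omega^n)$, and for $x \in \mathbb{B}_\delta(x^\infty)$ set
\[
g_x(\omega) \, \triangleq \, r_{x^\infty;\varepsilon^{\prime}}(x^\infty,x^\infty;\omega) - r_{x^\infty;\varepsilon^{\prime}}(x,x^\infty;\omega).
\]
Since $R_{N;x^\infty;\varepsilon^{\prime}}(y,x^\infty)=\IP_N[r_{x^\infty;\varepsilon^{\prime}}(y,x^\infty;\cdot)]$ and $R_{x^\infty;\varepsilon^{\prime}}(y,x^\infty)=\IP[r_{x^\infty;\varepsilon^{\prime}}(y,x^\infty;\cdot)]$, the expression inside the absolute value in (b2$^{\,\prime}$) is exactly $(\IP_N - \IP)\,g_x$, so the left-hand side of (b2$^{\,\prime}$) equals $\sqrt N\,\E\big[\sup_{f \in \calF_\delta} |(\IP_N - \IP)f|\big]$ with $\calF_\delta \triangleq \{\,g_x : x \in \mathbb{B}_\delta(x^\infty)\,\}$. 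Because $\calF_\delta$ does not involve $N$, any bound we derive for this quantity will automatically have a constant independent of $N$, which is precisely what (b2$^{\,\prime}$) requires.

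Next I would produce an $L_2(\mathbb{P})$ envelope for $\calF_\delta$. By Lemma~\ref{lemma:bound for r}, $x \mapsto r_{x^\infty;\varepsilon^{\prime}}(x,x^\infty;\omega)=r^{\uparrow}_{x^\infty;\varepsilon^{\prime}}(x,x^\infty;\omega)+r^{\downarrow}_{x^\infty;\varepsilon^{\prime}}(x,x^\infty;\omega)$ is Lipschitz on $\mathbb{R}^p$ with modulus
\[
L(\omega) \, \triangleq \, \mbox{Lip}_h(\zbld)\,\big[\,\mbox{Lip}_f(\xi) + \mbox{Lip}_g(\xi) + C_f(\xi) + C_g(\xi)\,\big];
\]
by Assumption~\ref{ass for ULLN} (boundedness of $\mbox{Lip}_f,\mbox{Lip}_g$ on $\wh{\Xi}$ together with square integrability of $\mbox{Lip}_h,C_f,C_g$), $L$ is square integrable. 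Consequently $g_{x^\infty}\equiv 0 \in \calF_\delta$ and $|g_x(\omega)| \le L(\omega)\,\|x - x^\infty\|_2 \le \delta\,L(\omega)$, so $F_\delta \triangleq \delta\,L$ is a measurable envelope of $\calF_\delta$ with $\|F_\delta\|_2 = \delta\,\|L\|_2$.

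Then I would estimate the bracketing numbers. Covering $\mathbb{B}_\delta(x^\infty)$ by $N_\eta \le (3\delta/\eta)^p$ Euclidean balls of radius $\eta$ with centers $x_1,\dots,x_{N_\eta}$, the brackets $\big[\,g_{x_i} - \eta L,\; g_{x_i} + \eta L\,\big]$ cover $\calF_\delta$, since Lemma~\ref{lemma:bound for r} gives $|g_x - g_{x_i}| \le \eta L$ whenever $\|x - x_i\|_2 \le \eta$, and each bracket has $L_2(\mathbb{P})$-length $\|2\eta L\|_2 = 2\eta\|L\|_2$; taking $\eta = \sigma/(2\|L\|_2)$ yields $\calN_{[\;]}\big(\sigma, \calF_\delta, L_2(\mathbb{P})\big) \le \big(6\,\delta\,\|L\|_2/\sigma\big)^p$. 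Substituting this into Lemma~\ref{lemma: entropy bound} and changing variables $\sigma = \delta\|L\|_2\,t$,
\[
\sqrt N\,\E\Big[\sup_{f\in\calF_\delta}|(\IP_N - \IP)f|\Big] \, \le \, K_1\!\int_0^{\delta\|L\|_2}\!\!\!\sqrt{\,p\,\log\big(6\delta\|L\|_2/\sigma\big)}\;d\sigma \, = \, \underbrace{K_1\sqrt{p}\,\|L\|_2\!\int_0^1\!\!\sqrt{\log(6/t)}\,dt}_{=: \, c}\;\delta ,
\]
and the last integral is finite, so $c<\infty$ is independent of $N$. This is exactly (b2$^{\,\prime}$) with $\Phi_N(\delta)=\delta$: the condition that $w\mapsto w^{-\alpha}\Phi_N(w)=w^{1-\alpha}$ be non-increasing on $(0,\delta]$ holds for any $\alpha\ge 1$, and one may take $\alpha\in[1,q)$ (non-empty as $q>1$; e.g.\ $q=2$ via Proposition~\ref{pr:strong local}).

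The only routine-but-careful steps are (i) checking $L\in L_2(\mathbb{P})$ from the integrability clauses of Assumption~\ref{ass for ULLN}, and (ii) turning a Euclidean $\eta$-net of $\mathbb{B}_\delta(x^\infty)$ into an $L_2(\mathbb{P})$-bracketing cover of $\calF_\delta$ through the Lipschitz modulus $L$ supplied by Lemma~\ref{lemma:bound for r}; after that it is a direct application of the uniform entropy bound. I do not expect a genuine obstacle here, the only substantive external ingredient being Lemma~\ref{lemma: entropy bound} itself; note also that the affineness hypothesis built into (b2$^{\,\prime}$) is not used in this particular estimate (it is needed only to justify passing from (b2) to (b2$^{\,\prime}$) in the first place).
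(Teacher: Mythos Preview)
Your proposal is correct and follows essentially the same approach as the paper: define the localized class $\calF_\delta$ of differences $r_{x^\infty;\varepsilon'}(x,x^\infty;\cdot)-r_{x^\infty;\varepsilon'}(x^\infty,x^\infty;\cdot)$, invoke Lemma~\ref{lemma:bound for r} to obtain a square-integrable Lipschitz modulus yielding envelope $\delta L$, transfer a Euclidean net of $\mathbb{B}_\delta(x^\infty)$ into $L_2(\mathbb{P})$-brackets of $\calF_\delta$, and integrate the resulting $(\delta/\sigma)^p$ bracketing number via Lemma~\ref{lemma: entropy bound} to get the linear-in-$\delta$ bound. Your treatment is in fact slightly more explicit than the paper's in two respects: you carry out the change of variables $\sigma=\delta\|L\|_2\,t$ to exhibit the constant, and you verify the non-increasing condition on $w\mapsto w^{-\alpha}\Phi_N(w)$ by noting it forces $\alpha\ge 1$ (hence requires $q>1$, which is supplied by Proposition~\ref{pr:strong local}); the paper leaves this last point implicit.
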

	
	\begin{proof}
		For any $\varepsilon' \in [0, \frac{\varepsilon}{2}]$, consider the functional class
		\[\calF \,\triangleq\, \left\{\,{r}_{x^\infty; \varepsilon^{\prime}}(x, x^\infty;\omega) - {r}_{x^\infty; \varepsilon^{\prime}}(x^{\, \infty}, x^{\, \infty};\omega)  \; \mid \;  x\in \mathbb{B}_{\delta}(x^{\, \infty}) \,\right\}.\]
		It follows from Lemma \ref{lemma:bound for r} that there exists a square integrable function $C(\omega)$ such that
		\begin{equation}\label{lip of difference}
		\begin{aligned}
		|\, {r}_{x^\infty; \varepsilon^{\prime}}(x, x^\infty;\omega) - {r}_{x^\infty; \varepsilon^{\prime}}(x^{\, \infty}, x^{\, \infty};\omega) \,| \leq
		C(\omega)\,\|\,x-x^{\, \infty}\, \|_2 \leq C(\omega) \delta.
		\end{aligned}
		\end{equation}
		Then $\calF$ is contained in the bracket $\left[-\delta\,C(\omega), \delta\, C(\omega) \right]$ and $\delta\, C(\omega)$ is the envelope function
		of $\calF$.  Below we establish the upper bound for $\calN_{[\;]}\left(\sigma, \calF, L_2(\mathbb{P})\right)$, i.e., the bracketing number of $\calF$.
		
		\gap
		
		For any $x \in \mathbb{B}_{\delta}(x^\infty)$, the bracketing number of $\sigma$-brackets to cover this compact set is of order $\left(\frac{\delta}{\sigma}\right)^{p}$.
		Denote this set of brackets as $\calG$. Then there exists a bracket $\left[x_1, x_2\right] \in \calG$ satisfying
		$ \|x_1-x_2\|_2 \leq \sigma$ such that $x_1 \leq x \leq x_2$ (pointwise comparison). Based on \eqref{lip of difference}, we further have
		\[
		\begin{aligned}	 	
		- C(\omega) \|x_1-x_2\|_2 \leq & r_{x^{\, \infty}; \varepsilon^{\prime}}(x, x^{\, \infty};\omega) -
		r_{x^\infty; \varepsilon^{\prime}}(x^{\, \infty}, x^{\, \infty};\omega) \triangleq t(x, x^{\, \infty}; \omega) 	 	\leq  C(\omega)\|x_1-x_2\|_2.
		\end{aligned}
		\]
		This means that any $t(x, x^{\, \infty}; \omega) \in \calF$ can be covered by a bracket
		$\left[-C(\omega)\|x_1-x_2\|_2, \, C(\omega)\|x_1-x_2\|_2\right]$ of $L_2(\IP)$-size of $2\sigma ||C(\omega)||_2$. Since $x$ can be arbitrarily chosen,
		this implies that there exists a constant $k$ such that
		$$
		\calN_{[\;]}\left(2\sigma\|C(\omega)\|_2, \calF, L_2(\mathbb{P})\right) \leq k \left(\frac{\delta}{\sigma}\right)^{p}, \epc
		\mbox{for every $0 \leq \sigma \leq \frac{\delta}{2}$}.
		$$
When $\sigma > \displaystyle{
\frac{\delta}{2}
}$, the left-hand side in the above inequality is $1$. It then follows from Lemma \ref{lemma: entropy bound} that
\begin{equation*}
\begin{array}{l}
\E\left[\displaystyle\sup_{\|x - x^{\, \infty}\|\leq \delta}\sqrt{N}\left| \,{R}_{N; x^\infty;\varepsilon^{\prime}}( x^{\, \infty}, x^{\, \infty}) -
R_{x^\infty;\varepsilon^{\prime}}( x^{\, \infty}, x^{\, \infty}) -{R}_{N;x^\infty;\varepsilon^{\prime}}(x, x^{\, \infty}) +
R_{x^\infty;\varepsilon^{\prime}}(x, x^{\, \infty}) \,\right| \, \right] \\[0.2in]
\leq \, K_1\int_{0}^{\delta\, \|C(\omega) \|_2}\sqrt{\log\calN_{[\;]}\left(\sigma, \calF, L_2(\mathbb{P})\right)}\,\mbox{d}\sigma
= 2K_1\|C(\omega) \|_2\int_{0}^{\delta/2\, }\sqrt{\log\calN_{[\;]}\left(2\sigma\|C(\omega) \|_2, \calF, L_2(\mathbb{P})\right)}\,\mbox{d}\sigma
\\ [0.2in]
\leq \, K_2 \int_{0}^{\delta/2} \sqrt{\log \left(\frac{k\delta}{\sigma}\right)}\,\mbox{d}\sigma \, \leq \, K \, \delta
\end{array}
\end{equation*}
for some constants $K_1, K_2$ and $K$.
\end{proof}
	
By combining Propositions \ref{pr:strong local} and \ref{entropy for lip}, we obtain our final theorem for the convergence rate of $x^N$ to $x^{\infty}$.
	
	\begin{theorem}\rm \label{thm: final theorem for rate}
		If Assumptions in Propositions \ref{pr:strong local} and \ref{entropy for lip} hold,
		then $\|x^N - x^\infty \|_2 = \mbox{O}_{\IP_{\infty}}(\frac{1}{\sqrt{N}})$.
	\end{theorem}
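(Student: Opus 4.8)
The plan is to assemble the pieces already in place: Proposition~\ref{pr:strong local} supplies Assumption~\ref{assu:rate}(b1) with $q=2$, Proposition~\ref{entropy for lip} supplies Assumption~\ref{assu:rate}(b2$^{\,\prime}$) with $\Phi_N(w)=w$, and the only thing left to check by hand is Assumption~\ref{assu:rate}(b3), after which Corollary~\ref{coro: b3'} delivers the conclusion.

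First I would invoke Proposition~\ref{pr:strong local}: since Assumption~\ref{assu: PA} (piecewise affineness) and the local strong convexity of $R_{x^\infty;\varepsilon^{\prime}}(\,\bullet\,,x^\infty)$ at $x^\infty$ are among ``the Assumptions in Proposition~\ref{pr:strong local}'', we obtain Assumption~\ref{assu:rate}(b1) with $q=2$ and a random variable $w_N=\Gamma_{\IP}(1)$ (in fact the deterministic constant $c$ in the strong convexity estimate). Next I would invoke Proposition~\ref{entropy for lip}: under Assumption~\ref{ass for ULLN}, Assumption~\ref{assu:rate}(b2$^{\,\prime}$) holds with $\Phi_N(\delta)=\delta$; here the monotonicity requirement ``$w\mapsto w^{-\alpha}\Phi_N(w)=w^{1-\alpha}$ is non-increasing on $(0,\delta]$'' is met for any $\alpha\in[1,2)$, which is consistent with the cross-constraint $\alpha<q=2$ inherited from (b1). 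I would also record that the scalar $\delta$ in both propositions is the one furnished by Lemma~\ref{lemma:equiv epsilon}, hence common, and that the ``almost surely'' qualifiers refer to probability-one sets independent of $\varepsilon$, $x^N$, and $x^\infty$, so their intersection is still probability one.

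It then remains to verify Assumption~\ref{assu:rate}(b3) for these choices. Take $\rho_N\triangleq\sqrt{N}$, a sequence of positive scalars tending to $\infty$; then
\[
\rho_N^{\,q}\,\Phi_N\!\left(\rho_N^{-1}\right)\,=\,\left(\sqrt{N}\right)^{2}\cdot\left(\sqrt{N}\right)^{-1}\,=\,\sqrt{N}\,\leq\,\sqrt{N},
\]
so (b3) holds. With Assumptions~\ref{assu:rate}(b1), (b2$^{\,\prime}$) and (b3) in force, and the almost-sure convergence of $\{x^N\}$ to $x^\infty\in\mathcal{D}$ being part of the standing setting of the section, Corollary~\ref{coro: b3'} yields $\|x^N-x^\infty\|_2=\mbox{O}_{\IP_{\infty}}(\rho_N^{-1})=\mbox{O}_{\IP_{\infty}}(1/\sqrt{N})$, which is the claim.

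The ``hard part'' here is not analytical: it is essentially bookkeeping, confirming that the constants and probability-one sets produced independently by the two cited propositions can be taken compatibly. The genuine cross-constraint is $\alpha<q$ from the interaction of (b1) and (b2$^{\,\prime}$), and it is harmless since $q=2$ forces nothing worse than $\alpha\in[1,2)$; the radius $\delta$ is common because both propositions trace it back to Lemma~\ref{lemma:equiv epsilon}. Once these identifications are made the statement is immediate from Corollary~\ref{coro: b3'} with $\rho_N=\sqrt{N}$.
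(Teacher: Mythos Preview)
Your proposal is correct and follows essentially the same route as the paper's own proof: invoke Proposition~\ref{pr:strong local} for (b1) with $q=2$, Proposition~\ref{entropy for lip} for (b2$^{\,\prime}$) with $\Phi_N(w)=w$, verify (b3) with $\rho_N=\sqrt{N}$, and conclude via Corollary~\ref{coro: b3'}. Your additional remarks on the monotonicity constraint $\alpha\in[1,2)$ and on the compatibility of the radii and probability-one sets are more explicit than what the paper records, but the underlying argument is identical.
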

	
	\begin{proof}
		By Propositions \ref{pr:strong local} and \ref{entropy for lip}, we know that
		Assumption \ref{assu:rate}~(b2) holds with $q= 2$ and Assumption (b2$^{\, \prime}$) holds with $\Phi_N(\delta) = \delta$.
		In oder to make Assumption \ref{assu:rate}~(b3) hold, it is suffice to find a sequence $\rho_N$ such that $\rho_N^2 \rho_N^{-1} \leq \sqrt{N}$.
		It is clear that $\rho_N$ can be chosen as $\sqrt{N}$. Therefore we obtain our conclusion based on Corollary \ref{coro: b3'}.
	\end{proof}
	
	
\section{Application: Noisy Amplitude-based Phase Retrieval Problem}
	
In this section, we use the nonconvex nonsmooth phase retrieval problem as an example
to illustrate that the C-stationary points and d-stationary points are
distinguishable even for the population risk minimization problems.  More importantly,
we can apply our established theory in the previous sections to this problem to demonstrate that every
computed d-stationary point converges to a global minimizer of the population problem at the rate of $\displaystyle \frac{1}{\sqrt{N}}$.
	
\gap
	
Phase retrieval, as described in the growing literature such as
\cite{candes2015phase,Shechtman15}, is a topical problem whose aim is to
recover a nonzero signal $\bar{x}\in \mathbb{R}^p$ from phaseless measurements.
We consider
\[
z_n \, = \, | \, \bar{x}^\top \xi^n \, | + \varepsilon_n,
\]
where $\left\{\varepsilon_n\right\}_{i =n}^N$ are independent and identically
distributed samples of a random error $\wt{\varepsilon}$ that has mean $0$ and variance $\sigma^2$.
We assume $\varepsilon_n$ is independent of $\xi_n$, for $n= 1, \cdots, N$.  In practice, we can obtain the
estimation of $\bar{x}$ by solving the following amplitude-based empirical minimization problem:
\begin{equation} \label{eq:phase empirical}
\displaystyle{
\operatornamewithlimits{\mbox{minimize}}_{x \in X}
} \quad \displaystyle{
\frac{1}{N}
} \, \displaystyle{
\sum_{n = 1}^{N}
} \, ( \, z_n - |x^{\top}\xi^n| \, )^2, 
\end{equation}
which corresponds to the population problem
\begin{equation} \label{eq:phase population}
\displaystyle{
\operatornamewithlimits{\mbox{minimize}}_{x \in X}
} \quad \calM(x) \, = \, \E_{\wt{\omega}} \, \left[ \, \wt{\zbld} - | \, x^{\top}\wt{\xi} \, |  \,\right]^2, 
\end{equation}
where $\wt{\zbld} = |\bar{x}^{\top}\wt{\xi}| + \wt{\varepsilon}$.  In this analysis,
we assume  $\wt{\xi}=\displaystyle\frac{\widetilde{\zeta}}{\| \widetilde{\zeta} \|_2}$ and $\widetilde{\zeta}$ follows the
standard $p$-dimensional multivariate Gaussian distribution.  In addition,
$\E_{\wt{\varepsilon}}\,[\,\wt{\varepsilon}\,] = 0$ and $\text{Var}_{\wt{\varepsilon}}\, [\,\wt{\varepsilon}\,] = \sigma^2$.
We further assume that $X$ is a convex compact set strictly containing $\mathbb{B}_{\|\bar{x}\|}(0_p)$.
The two problems (\ref{eq:phase population}) and (\ref{eq:phase empirical}) are special cases of the piecewise affine regression problem.

\gap
	
Before proceeding to the analysis of the problem (\ref{eq:phase population}), we need to say a few words about the set $X$ which was assumed
to be a convex compact set in our preceding analysis.  Such boundedness plays an important role in the previous analysis and ensures all points of
interest, that is, the stationary solutions of the population and empirical problems, are bounded.  In turn, the latter boundedness facilitates
the analysis, enabling us to bypass the technical issues associated with unboundedness and focus on the statistical analysis.
The boundedness of $X$ is unfortunately
inconsistent with the normal setting of the phase retrieval problems which has $X$ equal to the entire space, i.e., these problems are unconstrained.
In order to reconcile the gap between the common (unconstrained) setting of the problems and the constrained setting of the analysis, we assume
throughout the analysis below that the set $X$ is a compact ball centered at the origin with a radius sufficiently large so that $X$ contains
in its interior all the stationary points of
(\ref{eq:phase population}) given in Proposition~\ref{pr:phase stationary} and of the empirical problems (\ref{eq:phase empirical}) for all $N$.
Although a deeper analysis may allow us to show that such a precautious setting is unnecessary, we will work with this simplifying assumption
throughout the following analysis to avoid the technical complications of unboundedness and the possible existence of stationary solutions
lying on the boundary of $X$.

\gap

Another remark to be made about the problems (\ref{eq:phase empirical}) and (\ref{eq:phase population})
is that these two problems here are different
from the least-square formulation of solving quadratic equations and variations of such a formulation.  Specifically, the objective function
of the optimization formulation of such equations is
$\E_{\wt{\omega}}\left[ \, ( \, \bar{x}^{\top}\wt{\xi} \, )^2- ( \, x^{\top}\wt{\xi} \, )^2  \,\right]^2$; see e.g., the two references cited above.
The recent references \cite{duchi2018solving,Dima18} employ the objective $\E_{\wt{\omega}}\left| \, ( \, \bar{x}^{\top}\wt{\xi} \, )^2 - ( \, x^{\top}\wt{\xi} \, )^2  \,\right|$
which is also different from ours.  Nevertheless, the references such as \cite{wang2018solving,ma2019optimization} has used the same formulation as ours in studying
the phase problem but the results of its analysis are not as sharp as ours.
One major advantage of the piecewise affine objective
$\wt{\zbld} - \left| x^{\top}\wt{\xi} \,\right|$ employed in our formulations
(\ref{eq:phase population}) and (\ref{eq:phase empirical}) is that the resulting
objective in the empirical problem (\ref{eq:phase empirical}) is the composite of
a convex quadratic function with a piecewise affine function, thus is a
{\sl piecewise linear-quadratic} (PLQ) function in $x$.  This is in contrast to	
$\displaystyle{		
\sum_{n = 1}^{N}
} \, \left| \, z_n^2 - ( \, x^{\top}\xi^n \, )^2 \, \right|$, which is a
{\sl piecewise quadratic} (as opposed to piecewise linear-quadratic) function
in $x$, and also to the objective $\displaystyle{
\sum_{n = 1}^{N}
} \, \left( \, z_n^2 - ( \, x^{\top}\xi^n \, )^2 \, \right)^2$, which is a
quartic (multivariate) polynomial, thus smooth, function of $x$.  See the
reference \cite{CuiPangHongChang18} for a comprehensive study of a (finite-dimensional)
PLQ optimization problem; in particular, many favorable properties that are not shared
by objectives of other kinds, including the piecewise quadratic and non-quadratic ones
are presented therein.  
Our	contributions to the problems (\ref{eq:phase population}) and (\ref{eq:phase empirical}) are
summarized below:
	
\gap
	
{\bf (i)} The origin $x = 0$ is a Clarke stationary solution of the empirical problem
(\ref{eq:phase empirical}) for every $N$ and also
a Clarke stationary solution of the population problem (\ref{eq:phase population});
yet $x = 0$ is not a directional stationary solution, thus not	
a local minimizer, of either problem; (note: the origin is a stationary solution
of the other two objective functions, which is excluded by
our PLQ objective); these results are also valid when $\wt{\zeta}$ is
not normalized.  Moreover, we show that all the stationary solutions of the population problem (\ref{eq:phase population})
except $\pm \bar{x}$ are saddle points.  We further demonstrate that $\calM(x)$ is locally strong convex near $\pm \bar{x}$.
All these results are seemingly new in the existing literature.
	
	
	\gap
	
{\bf (ii)} By applying our developed theory, we demonstrate that every defined
$\varepsilon$-strong d-stationary point of the empirical problem
(\ref{eq:phase empirical}) converges to one of true signals $\pm \bar{x}$ at the rate of
$\frac{1}{\sqrt{N}}$. Compared with existing literature such as \cite{ma2019optimization}, which rely heavily on a particular algorithm with spectral initialization,
to the best of our knowledge, this is the first theoretical analysis that provides the statistical
guarantee of the global convergence to true signals for the amplitude-based phase retrieval problem \eqref{eq:phase population}.
	
\gap
	
{\bf (iii)} We consider a normalized random variable $\wt{\xi}$ so that
the resulting variable $\wt{\xi}$ is uniformly bounded; this boundedness is required by our
asymptotic analysis.  Presently, it is not clear if a rigorous
asymptotic theory can be developed for a coupled nonconvex nondiffrentiable problem
such as the phase problem here without requiring boundedness of the underlying
randomness. 	
	
\gap
	
{\bf (iv)} An algorithm described in \cite{CuiPangSen18} can be applied to
numerically verify the obtained results of statistical consistency of the
d-stationary solutions of the empirical problems and shed lights on the convergence	
of such solutions and their objective values for this phase retrieval problem.
Here we point out that the algorithm in the cited reference does not require
any special treatments or assumptions on the initialization, which are needed for
most existing literature of phase retrieval problems such as \cite{candes2015phase} or \cite{ma2019optimization}.
While the exception is \cite{chen2019gradient} for the quartic-based phase retrieval problem, they still require
the initial point of the proposed algorithm to satisfy certain conditions with high probability to demonstrate its global
convergence, see~\cite[Theorem 2 \& 3]{chen2019gradient}.
Thus combining our established theory and the corresponding algorithm in \cite{CuiPangSen18},
we have fill the gap between practical computation and theoretical analysis of the amplitude-based phase retrieval problem
with the above choice of the random variable $\wt{\xi}$.
	
	\gap
	
	Before the derivation, we point out two facts about $\wt{\xi}$ and refer to
	\cite[Chapter~4]{bryc2012normal} for more properties of this random vector.
	\begin{itemize}
		\item[(F1)] The random vector $\wt{\xi}$ follows a uniform distribution on the unit
		sphere in $\mathbb{R}^p$; $\wt{\xi}$ and $\| \widetilde{\zeta} \|_2$ are
		independent \cite[Theorem 4.1.2]{bryc2012normal}.
		\item[(F2)]  $\wt{\xi}$ is invariant over any orthogonal transformation.
	\end{itemize}
	With $\wt{\xi}$ as stated, we have
	\[
	\begin{array}{lll}
	\calM(x) = \E_{\wt{\omega}} \, \left[ \, \wt{\zbld} - | \, x^{\top}\wt{\xi} \, |  \,\right]^2 & = &
	\E_{\wt{\omega}} \, \left[ \, |\, \bar{x}^{\top}\wt{\xi} \, | -
	| \, x^{\top}\wt{\xi} \, |  \,\right]^2 + \sigma^2 \\ [0.15in]
	& = & \E_{\wt{\xi}} \, \left[ \, \wt{\xi}^{\, \top}\bar{x}\bar{x}^{\top}\wt{\xi}
	\,\right] +	\E_{\wt{\xi}} \left[ \, \wt{\xi}^{\, \top}xx^{\top}\wt{\xi}  \,\right]
	-2\E_{\xi} \, \left[ \ \left| \ \wt{\xi}^{\, \top}\bar{x}x^{\top}\wt{\xi} \ \right|
	\,\right] + \sigma^2 \\ [0.15in]	
	& = & \E_{\wt{\xi}} \, \left[ \, \wt{\xi}^{\, \top}(\bar{x}\bar{x}^{\top}+xx^{\top})
	\wt{\xi} \,\right] - \E_{\wt{\xi}}\left[ \, \left| \, \wt{\xi}^{\, \top}(\bar{x}x^{\top}
	+ x\bar{x}^{\top})\wt{\xi} \, \right| \,\right] + \sigma^2.	
	\end{array}
	\]	
	Based on the first equality, it is clear that $\pm \bar{x}$ are global minimizers of $\calM(x)$. Define the matrices $M_1(x) \triangleq ( \bar{x}\bar{x}^{\top} + xx^{\top} )$ and	
	$M_2(x) \triangleq \bar{x}x^{\top} + x\bar{x}^{\top}$.  Clearly both matrices $M_1(x)$
	and $M_2(x)$ are of rank at most 2.  Let $\lambda_{\pm}(M_i(x))$ together with
	$p - 2$ zeros be the eigenvalues of the matrix $M_i(x)$ for $i = 1, 2$.	
	By some linear algebraic manipulations, we can show
	\[	
	\lambda_{\pm}(M_1(x)) \, = \, \frac{||x||^2_2 + ||\bar{x}||_2^2 \pm
		\displaystyle{\sqrt{(||x||_2^2-||\bar{x}||_2^2)^2 + 4(\bar{x}^{\top}x)^2}}}{2}
	\]
	and
	\[
	\lambda_{\pm}(M_2(x)) \, = \, \bar{x}^{\top}x \pm \| x \|_2 \| \bar{x} \|_2.
	\]
	By using eigenvalue decomposition and (F2), we derive
	\begin{equation} \label{eq:expression phase}
	\calM(x) = \E_{\wt{v}} \, \left[ \, \lambda_+(M_1(x))\wt{v}_1^{\, 2} +
	\lambda_-(M_1(x)) \wt{v}_2^{\, 2} \, \right] -
	\E_{\wt{v}} \, \left[ \, \left| \, \lambda_+(M_2(x))\wt{v}_3^{\, 2} +
	\lambda_-(M_2(x))\wt{v}_4^{\, 2} \, \right| \,\right] + \sigma^2,
	\end{equation}
	where $\wt{v}_1$ and $\wt{v}_2$ being two coordinates of a uniform distribution on the
	unit sphere, and similarly for $\wt{v}_3$ and $\wt{v}_4$. These random variables
	are not necessarily
	independent. Denote $w_1$ and $w_2$ as the corresponding eigenvectors of
	$\lambda_{\pm}(M_1(x))$ and $w_3$ and $w_4$ as the corresponding eigenvectors for
	$\lambda_{\pm}(M_2(x))$, respectively.  So $\wt{v}_i = w_i^\top \widetilde\xi =
	\displaystyle\frac{w_i^\top \widetilde{\zeta}}{\|\,\widetilde{\zeta}\, \|_2}$,
	for $i = 1, \cdots, 4$.  Then by independence between $\widetilde\xi$ and
	$\| \, \widetilde{\zeta}\,\|_2$, we can show that 	
	\[
	\E_{\wt{v_i}}\,[\, \wt{v}^{\, 2}_i \,] = \frac{\E_{\wt{\zeta}} \,
		[ \,(\,w_i^\top \widetilde{\zeta} \,)^2 \,]}{\E_{\wt{\zeta}} \,
		[ \, \| \, \widetilde{\zeta} \,\|^2_2 \,]} = \frac{1}{p}.
	\]
	Similarly, we can also show
	\[
	\begin{array}{l}
	\E_{\wt{v}} \, \left[ \, \left| \, \lambda_+(M_2(x))\wt{v}_3^{\, 2} +
	\lambda_-(M_2(x))\wt{v}_4^{\, 2} \, \right| \,\right] \\ [0.1in]
	= \, \E_{(\widetilde\xi, \,\wt{\zeta})} \, \left[ \displaystyle{
		\frac{\, \left| \, \lambda_+(M_2(x))(w_3^\top \wt{\zeta} \,)^{\, 2} +
			\lambda_-(M_2(x))(w_4^\top \wt{\zeta}\,)^{\, 2} \, \right|}{
			\left\| \, \widetilde{\zeta} \, \right\|^2_2}
	} \,\right] \\ [0.35in]
	= \, \displaystyle{
		\frac{\E_{\widetilde\xi} \, \left[ \, \left| \,
			\lambda_+(M_2(x))(w_3^\top \wt{\zeta} \,)^{\, 2} +
			\lambda_-(M_2(x))(w_4^\top \wt{\zeta}\,)^{\, 2} \, \right| \,\right]}
		{\E_{\wt{\zeta}} \left[ \, \| \,\widetilde{\zeta} \,\|^2_2 \, \right]}
	} \\ [0.25in]
	= \, \displaystyle{
		\frac{1}{p}
	} \, \left\{ \, \E_{\wt{u}} \, \left[ \, \left| \, \lambda_+(M_2(x))\wt{u}_3^{\, 2} +
	\lambda_-(M_2(x))\wt{u}_4^{\, 2} \, \right| \, \right] \, \right\},
	\end{array}
	\]	
	where $\wt{u}_3$ and $\wt{u}_4$ are mutually independent Gaussian random variables.
	Based on this, we can further simplify $\calM(x)$ as
	\begin{equation} \label{eq:expression phase}
	\begin{array}{lll}
	\calM(x) & = & \displaystyle{
		\frac{1}{p}
	} \left[ \, \lambda_+(M_1(x)) + \lambda_-(M_1(x))\,\right] -
	\E_{\wt{u}}\left[ \, \left| \, \lambda_+(M_2(x)) \wt{u}_3^{\, 2} +
	\lambda_-(M_2(x))\wt{u}_4^{\, 2} \, \right| \, \right] + \sigma^2
	\\ [0.15in]
	& = & \displaystyle{
		\frac{1}{p}
	} \, \left( \, \| \, x \, \|_2^2 + \| \, \bar{x} \, \|_2^2 \, \right)
	- \displaystyle{
		\frac{1}{p}
	} \, \E_{\wt{u}}\left[ \, \left| \, \bar{x}^{\top}x \,
	\left( \, \wt{u}_3^{\, 2} + \wt{u}_4^{\, 2} \, \right) +
	\| x \|_2 \| \bar{x} \|_2 \left( \, \wt{u}_3^{\, 2} - \wt{u}_4^{\, 2} \, \right) \,
	\right| \, \right] + \sigma^2.
	\end{array}
	\end{equation}
	When $x = \pm\bar{x}$, we have $M_1(x) = 2\bar{x}\bar{x}^{\top}$ and
	$M_2(x) =\pm 2\bar{x}\bar{x}^{\top}$, thus $\calM(\pm\bar{x}) = \sigma^2$. We next demonstrate that $x = 0$ is a Clarke
	stationary point of both ${\cal M}(x)$ and ${\cal M}_N(x)$.
	
	\begin{proposition} \rm
		Let $\widetilde{\zeta}$ follow the standard $p$-dimensional multivariate Gaussian
		distribution and
		$\wt{\xi} = \displaystyle{
			\frac{\widetilde{\zeta}}{\| \widetilde{\zeta} \|_2}
		}$.
		Then $x = 0$ is a Clarke stationary point of both ${\cal M}$ and ${\cal M}_N$.
	\end{proposition}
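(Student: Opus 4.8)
The plan is to reduce Clarke stationarity at the origin to the single inclusion $0\in\partial_C\calM(0)$ (and its empirical counterpart $0\in\partial_C\calM_N(0)$) and then to exploit the fact that both objective functions are \emph{even}. First I would record two reductions. Under the standing assumption that $X$ is a compact ball centered at the origin whose interior contains all stationary solutions of interest, the origin lies in $\operatorname{int}(X)$, so $\mathcal{N}(0;X)=\{0\}$ and the stationarity condition \eqref{Clarke:saa} (together with its empirical analogue) collapses to $0\in\partial_C\calM(0)$, respectively $0\in\partial_C\calM_N(0)$. Next, both $\calM$ and $\calM_N$ are real-valued and locally Lipschitz on $\mathbb{R}^p$: for $\calM_N$ this is immediate since it is a finite sum of compositions of $(z_n-|\cdot|)^2$ with the linear maps $x\mapsto x^\top\xi^n$, while for $\calM$ it follows from $\|\wt{\xi}\|_2=1$ and $\E|\wt{\zbld}|<\infty$ (finite variance of $\wt{\varepsilon}$), which bound the local Lipschitz modulus of $x\mapsto(\wt{\zbld}-|x^\top\wt{\xi}|)^2$ on bounded sets. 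Hence $\partial_C$ is well defined at $0$, and by the definition of the Clarke subdifferential used here, $v\in\partial_C\varphi(0)$ if and only if $\varphi^{\circ}(0;w)\ge v^\top w$ for all $w$, where $\varphi^{\circ}(0;w)\triangleq\limsup_{x\to0,\,t\downarrow0}[\varphi(x+tw)-\varphi(x)]/t$.

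The crux is that $\calM(-x)=\calM(x)$ and $\calM_N(-x)=\calM_N(x)$, because $|(-x)^\top\wt{\xi}|=|x^\top\wt{\xi}|$ and $|(-x)^\top\xi^n|=|x^\top\xi^n|$. I claim that for \emph{any} even locally Lipschitz $\varphi$ one has $\varphi^{\circ}(0;w)\ge 0$ for every $w\in\mathbb{R}^p$; granting this and taking $v=0$ in the characterization above gives $0\in\partial_C\varphi(0)$, and applying it to $\varphi=\calM$ and $\varphi=\calM_N$ finishes the proof. To establish the claim I would estimate the $\limsup$ defining $\varphi^{\circ}(0;w)$ along the admissible path $x_t\triangleq-\tfrac12\,t\,w$, $t\downarrow0$ (which satisfies $x_t\to0$): by evenness,
\[
\frac{\varphi(x_t+t\,w)-\varphi(x_t)}{t}=\frac{\varphi(\tfrac12\,t\,w)-\varphi(-\tfrac12\,t\,w)}{t}=0 ,
\]
so the $\limsup$ over all $(x,t)\to(0,0^{+})$ is at least $0$.

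A fully equivalent but more computational route, should a direct verification be preferred, uses the limiting-gradient representation \eqref{eq:Clarke subdiff} of $\partial_C$. For $\calM_N$, at any $x$ with $x^\top\xi^n\ne0$ for all $n$ one has $\nabla\calM_N(x)=\frac2N\sum_{n}\big(|x^\top\xi^n|-z_n\big)\operatorname{sign}(x^\top\xi^n)\,\xi^n$; letting $x=u/k\to0$ along a generic unit direction $u$ (so that $u^\top\xi^n\ne0$ for every $n$) yields the limiting gradient $v(u)=-\frac2N\sum_{n}z_n\operatorname{sign}(u^\top\xi^n)\,\xi^n$, and since $v(-u)=-v(u)$ the midpoint $\tfrac12 v(u)+\tfrac12 v(-u)=0$ lies in the convex hull, i.e.\ $0\in\partial_C\calM_N(0)$; the same sign cancellation applies to $\calM$ via the closed-form expression for $\calM$ derived above, whose only point of non-differentiability is the origin. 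I expect no substantive obstacle here: the one step that genuinely needs checking is the reduction $\mathcal{N}(0;X)=\{0\}$, which relies on the (assumed) interiority of the origin in $X$, and the remainder is a short symmetry argument.
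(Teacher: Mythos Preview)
Your argument is correct and takes a genuinely different, more streamlined route than the paper's.

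The paper works with the convex-hull-of-limiting-gradients representation \eqref{eq:Clarke subdiff}: it picks a specific nonzero direction $\hat{x}$ orthogonal to $\bar{x}$, computes $\nabla_x\widehat{\calM}(\pm\hat{x}/k;u)$ explicitly from the eigenvalue form \eqref{eq:expression phase}, takes expectations, and shows that the average of the two limiting gradients vanishes because $\E_{\wt{u}}[\mbox{sgn}(\wt{u}_3^2-\wt{u}_4^2)(\wt{u}_3^2+\wt{u}_4^2)]=0$ by symmetry of the integrand. The same template is repeated for $\calM_N$.

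Your primary argument bypasses all of this computation by isolating the single structural feature that matters: both $\calM$ and $\calM_N$ are \emph{even}, because they depend on $x$ only through $|x^\top\wt{\xi}|$ (respectively $|x^\top\xi^n|$). The lemma you prove---that any even locally Lipschitz $\varphi$ satisfies $\varphi^\circ(0;w)\ge 0$ for every $w$, via the one-line path $x_t=-\tfrac12 tw$---is a clean general fact that immediately gives $0\in\partial_C\varphi(0)$ once $0\in\operatorname{int}(X)$. This approach uses neither the Gaussian law of $\wt{\zeta}$, nor the eigenvalue decomposition of $M_2(x)$, nor any interchange of differentiation and expectation, and it treats $\calM$ and $\calM_N$ uniformly. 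Your alternative gradient-based route (taking limits along $u/k$ and $-u/k$ for a generic $u$, then averaging $v(u)$ and $v(-u)=-v(u)$) is essentially the paper's idea, but you do not need the special choice $u\perp\bar{x}$; that orthogonality in the paper is only a computational convenience to keep the formulas short. The price you pay is that the evenness argument is nonconstructive: it certifies $0\in\partial_C\calM(0)$ without exhibiting nontrivial limiting gradients, whereas the paper's calculation produces them explicitly, which may be of independent interest elsewhere in the phase-retrieval analysis.
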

	
	\begin{proof} Since $0$ belongs to the interior of $X$, we can first verify
		that $\nabla \lambda_{+}(M_1(0)) + \nabla \lambda_{-}(M_1(0)) = 0$.
		Hence to show that $x = 0$ is a Clarke stationary point of ${\cal M}$, it suffices
		to show
		\begin{equation} \label{eq:zero critical}
		0 \, \in \, \left\{ \, \partial_C \, \E_{\wt{u}} \left[ \, \left| \,
		\underbrace{( \, \bar{x}^{\top}x + \| x \|_2 \, \| \bar{x} \|_2 \, )\wt{u}_3^{\, 2} +
			( \, \bar{x}^{\top}x - ||x||_2 \, ||\bar{x}||_2 \, )\wt{u}_4^{\, 2}}_{\mbox{
				denoted $e(x;\wt{u})$}} \, \right| \, \right] \, \right\}_{x=0} .
		\end{equation}
		Let $\wh{\cal M}(x;u) \triangleq | \, e(x;u) \, |$.  To evaluate
		$\partial_C \, {\cal M}(0)$, we employ the expression (\ref{eq:Clarke subdiff})
		by taking $\wh{x}^{\, k} \triangleq \displaystyle{
			\frac{\wh{x}}{k}
		}$, where $\wh{x}$ is a fixed nonzero vector satisfying $\bar{x}^{\top} \wh{x} = 0$.
		We have $\wh{\cal M}(\pm \wh{x}^{\, k};u) = \displaystyle{
			\frac{1}{k}
		} \, \| \, \wh{x} \, \|_2 \, \| \, \bar{x} \, \|_2 \, \left( \, u_3^2 - u_4^2 \, \right)$
		which is not equal to zero almost surely.  Hence,
		\[
		\nabla_x \wh{\cal M}(\pm \wh{x}^{\, k};u) \, = \, \mbox{sgn}\left( \, u_3^2 - u_4^2 \,
		\right) \, \left[ \, \bar{x} \, \left( \,  u_3^2 + u_4^2 \, \right) \pm \,
		\displaystyle{
			\frac{\| \, \bar{x} \, \|_2}{\| \, \wh{x} \, \|_2}
		} \, \wh{x} \, \left( \, u_3^2 - u_4^2 \, \right) \, \right],
		\]
		which is independent of $k$.  Consequently,
		\[ \begin{array}{lll}
		\nabla_x \E_{\wt{u}}\left\{ \, \displaystyle{
			\frac{1}{2}
		} \, \left[ \, \nabla_x \wh{\cal M}(\wh{x}^{\, k};\wt{u}) +
		\nabla_x \wh{\cal M}(-\wh{x}^{\, k};\wt{u}) \, \right] \, \right\}
		& = & \displaystyle{
			\frac{1}{2}
		} \, \E_{\wt{u}} \left[ \, \nabla_x \wh{\cal M}(\wh{x}^{\, k};\wt{u}) +
		\nabla_x \wh{\cal M}(-\wh{x}^{\, k};\wt{u}) \, \right] \\ [0.15in]
		& = &  \, \bar{x} \, \E_{\wt{u}} \left[ \, \mbox{sgn}\left( \, \wt{u}_3^{\, 2} -
		\wt{u}_4^{\, 2} \, \right) \, \left( \, \wt{u}_3^{\, 2} + \wt{u}_4^{\, 2} \, \right)
		\, \right] \, = \, 0,
		\end{array} \]
		where the last equality holds because the distribution of
		$\mbox{sgn}(\wt{u}_3^{\, 2} - \wt{u}_4^{\, 2})( \wt{u}_3^{\, 2} + \wt{u}_4^{\, 2})$
		is symmetric.  This is enough to establish (\ref{eq:zero critical}).  Thus $x = 0$ is
		a Clarke stationary point of the population objective ${\cal M}$ for the phase problem
		(\ref{eq:phase population}).  Omitting the details, we can similarly show that $x = 0$
		is a Clarke stationary point of the empirical objective ${\cal M}_N$ by verifying
		\[
		0 \, \in \, \partial_C \left\{ \, \displaystyle{
			\frac{1}{N}
		} \, \displaystyle{
			\sum_{n = 1}^{N}
		} \, ( \, z_n - |x^{\top}\xi^n| \, )^2 \,  \right\}_{x=0}
		\]
		using the same sequence of points $\left\{ \pm x^k \right\}$ as above.
	\end{proof}	
	
	Next, we show that $x = 0$ is not a d-stationary point of ${\cal M}$.
	Since $\wh{\cal M}(\bullet;u)$ is positively homogeneous, it follows that
	\[ \begin{array}{lll}
	\wh{\cal M}(\bullet;u)^{\prime}(0;v) \, = \, \wh{\cal M}(v;u) & = &
	\left| \, ( \, \bar{x}^{\top}v + \| v \|_2 \, \| \bar{x} \|_2 \, )u_3^2 +
	( \, \bar{x}^{\top}v - ||v||_2 \, ||\bar{x}||_2 \, )u_4^2 \, \right|,
	\epc \forall \, v \\ [0.2in]
	& = &  \left| \, \bar{x}^{\top}v \, ( \, u_3^2 + u_4^2 \, ) +
	||v||_2 \, ||\bar{x}||_2 \, \left( \, u_3^2 - u_4^2 \, \right) \, \right| \\ [0.2in]
	& = & 2 \| \, \bar{x} \, \|_2^2 \, u_3^2 \epc \mbox{for $v = \bar{x} \in X$}.
	\end{array} \]
	Hence
	\[
	{\cal M}^{\, \prime}(0;\bar{x}) \, = \, - \frac{1}{p}\,
	\E_{\wt{u}}\left[ \, \wh{\cal M}(\bullet;\wt{u} )^{\prime}(0;v) \, \right]
	\, = \, -\frac{2}{p} \, \| \, \bar{x} \, \|_2^2 \,
	\E_{\wt{u}}\left[ \, \wt{u}^{\, 2} \, \right] \, < \, 0.
	\]
	
	We next compute the full set of d-stationary points of the population problem
	(\ref{eq:phase population}).  For a given nonzero vector $x$,
	since $e(x;\bullet) \neq 0$ almost surely,
	we can derive from the expression (\ref{eq:expression phase}),
	\[
	\nabla \calM(x) \, = \, \frac{2}{p} \, \left[ \, 1 - \thalf \,
	\E_{\wt{u}}\left\{ \, \mbox{sgn}(e(x;\wt{u})) \, \displaystyle{
		\frac{\| \, \bar{x} \, \|_2}{\| \, x \, \|_2}
	} \, ( \, \wt{u}_3^{\, 2} - \wt{u}_4^{\, 2} \, ) \, \right\} \, \right] \, x -
	\frac{1}{p}\E_{\wt{u}}\left\{ \, \mbox{sgn}(e(x;\wt{u})) \, ( \, \wt{u}_3^2 +
	\wt{u}_4^2 \, ) \, \right\} \, \bar{x}.
	\]
	Based on this expression, we can establish the following result.
	
\begin{proposition} \label{pr:phase stationary} \rm
Let $\widetilde{\zeta}$ follow the standard $p$-dimensional multivariate Gaussian
distribution and
$\wt{\xi} = \displaystyle\frac{\widetilde{\zeta}}{\| \widetilde{\zeta} \|_2}$.
Then the stationary solutions of (\ref{eq:phase population}) either are $\pm \bar{x}$ or belong
to		
\[
\left\{ x \, \mid \, \bar{x}^{\top} x = 0 \mbox{ and }
\| \, x \, \|_2 = \displaystyle{
\frac{2}{\pi}
} \, \| \, \bar{x} \, \|_2 \right\}
\]
Moreover, there is only one suboptimal stationary value which is equal to
$\displaystyle\frac{1}{p} \,\left[ 1 - \displaystyle{
\frac{4}{\pi^2}
} \, \right] \| \, \bar{x} \, \|_2^2$.
\end{proposition}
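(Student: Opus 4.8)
The plan is to reduce d-stationarity of (\ref{eq:phase population}) to the single equation $\nabla\calM(x)=0$. Since $X$ is assumed to contain all stationary points of the population problem in its interior, the normal-cone term in the d-stationarity condition is inactive; and for every $x\neq 0$ the random variable $e(x;\wt u)$ vanishes only on a $\IP$-null set. Indeed, writing $a\triangleq\bar x^{\top}x$ and $b\triangleq\|x\|_2\|\bar x\|_2$, we have $e(x;u)=(a+b)u_3^2+(a-b)u_4^2$ with $a+b\ge 0\ge a-b$ by Cauchy--Schwarz and $b>0$, so at most finitely many angular directions make $e$ vanish. Consequently $\calM$ is differentiable away from the origin, its gradient is given by the displayed formula just before the proposition, and that formula shows $\nabla\calM(x)\in\mbox{span}(x,\bar x)$, say $\nabla\calM(x)=\alpha(x)\,x+\beta(x)\,\bar x$. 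Since $\pm\bar x$ are global minimizers (hence stationary) and $x=0$ has already been shown not to be d-stationary, the content is to determine all remaining $x\neq 0$ with $\alpha(x)\,x+\beta(x)\,\bar x=0$.

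I would split according to whether $x$ is a scalar multiple of $\bar x$. If $x=t\bar x$ with $t\neq 0$, the gradient lies in $\mbox{span}(\bar x)$, so d-stationarity is equivalent to $\frac{d}{dt}\calM(t\bar x)=0$; here $\mbox{sgn}(e(t\bar x;\wt u))$ is almost surely constant, and the simplified expression for $\calM$ collapses to $\calM(t\bar x)=\frac1p(t-1)^2\|\bar x\|_2^2+\sigma^2$ for $t>0$ and $\frac1p(t+1)^2\|\bar x\|_2^2+\sigma^2$ for $t<0$, whose only critical points are $t=\pm 1$, i.e.\ $x=\pm\bar x$. If instead $x$ and $\bar x$ are linearly independent, then $\alpha(x)\,x+\beta(x)\,\bar x=0$ forces $\alpha(x)=\beta(x)=0$ separately.

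The computational core is to evaluate $\alpha(x)$ and $\beta(x)$ via the rotational invariance of $(\wt u_3,\wt u_4)$: writing $(\wt u_3,\wt u_4)=\wt\rho\,(\cos\wt\theta,\sin\wt\theta)$ with $\wt\rho^{\,2}\sim\chi^2_2$ and $\wt\theta$ uniform on $[0,2\pi)$ and independent, one has $\wt u_3^2+\wt u_4^2=\wt\rho^{\,2}$, $\wt u_3^2-\wt u_4^2=\wt\rho^{\,2}\cos 2\wt\theta$, and $\mbox{sgn}(e(x;\wt u))=\mbox{sgn}(a+b\cos 2\wt\theta)$. Hence $\beta(x)=-\frac2p\,\E[\mbox{sgn}(a+b\cos 2\wt\theta)]$, which vanishes exactly when $\{\theta:a+b\cos 2\theta>0\}$ has Lebesgue measure $\pi$; since $|a|<b$ here, an elementary computation ($|\{\theta\in[0,2\pi):\cos 2\theta>c\}|=2\arccos c$) forces $a=0$, i.e.\ $\bar x^{\top}x=0$. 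Substituting $a=0$, $\alpha(x)=\frac2p\big[1-\frac12\frac{\|\bar x\|_2}{\|x\|_2}\,\E[\wt\rho^{\,2}]\,\E|\cos 2\wt\theta|\big]=\frac2p\big[1-\frac{2\|\bar x\|_2}{\pi\|x\|_2}\big]$ using $\E[\wt\rho^{\,2}]=2$ and $\E|\cos 2\wt\theta|=\frac2\pi$, so $\alpha(x)=0$ iff $\|x\|_2=\frac2\pi\|\bar x\|_2$. This exhausts the stationary set. Finally, on this circle $\calM$ depends only on $\|x\|_2$ and $\bar x^{\top}x$, so plugging $\bar x^{\top}x=0$ and $\|x\|_2=\frac2\pi\|\bar x\|_2$ into the simplified formula for $\calM$, together with $\E|\wt u_3^2-\wt u_4^2|=\E[\wt\rho^{\,2}]\E|\cos 2\wt\theta|=\frac4\pi$, yields the single suboptimal value $\frac1p\big(1-\frac4{\pi^2}\big)\|\bar x\|_2^2$ (up to the additive noise term $\sigma^2$ in $\calM$). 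The step I expect to be most delicate is the bookkeeping in these expectations --- in particular getting the measures of the angular sets and the constant $\E|\cos 2\wt\theta|=\frac2\pi$ right --- together with isolating the parallel case, where $\mbox{sgn}(e)$ degenerates to a constant and the generic argument does not apply.
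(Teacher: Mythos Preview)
Your proof is correct and follows the same overall architecture as the paper: reduce d-stationarity to $\nabla\calM(x)=0$ away from the origin, write the gradient as $\alpha(x)\,x+\beta(x)\,\bar x$, and split into the cases where $x$ is parallel to $\bar x$ versus not. The difference is purely in how the key expectations are evaluated. The paper shows $\beta(x)=0\Rightarrow\bar x^\top x=0$ by a symmetry trick: writing $e(x;u)=z_+u_3^2-z_-u_4^2$ and swapping the i.i.d.\ pair $(u_3,u_4)$ to cancel the main terms, leaving a nonnegative integrand supported on a set of positive measure unless $z_+=z_-$. You instead pass to polar coordinates $(\wt\rho,\wt\theta)$, factor $\mbox{sgn}(e)=\mbox{sgn}(a+b\cos 2\wt\theta)$, and compute the Lebesgue measure of $\{\cos 2\theta>-a/b\}$ directly; this gives $\E[\mbox{sgn}(a+b\cos 2\wt\theta)]=\frac{2}{\pi}\arccos(-a/b)-1$, which vanishes iff $a=0$. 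Your route is slightly more explicit and yields the constant $\E|\wt u_3^2-\wt u_4^2|=\E[\wt\rho^{\,2}]\E|\cos 2\wt\theta|=4/\pi$ by the same device, whereas the paper simply quotes this value. Both arguments handle the parallel case $x=t\bar x$ identically (there $\mbox{sgn}(e)$ is constant and $\calM(t\bar x)=\frac1p(|t|-1)^2\|\bar x\|_2^2+\sigma^2$). Your caveat about the additive $\sigma^2$ in the suboptimal value is well taken; the paper's stated value omits it.
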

	
	\begin{proof}  Since there is no stationary solution on the boundary of $X$, we can
		compute all stationary solutions by letting $x \neq 0$ satisfy $\nabla \calM(x) = 0$.
		Note that we have already showed that $0$ is not a d-stationary solution of $\calM$.
		If $\E_{\wt{u}}\left\{ \, \mbox{sgn}(e(x;\wt{u})) \, ( \, \wt{u}_3^{\, 2} +
		\wt{u}_4^{\, 2} \, ) \, \right\} \neq 0$, then for some nonzero scalar $\eta$,
		dependent on $x$, we have $x = \eta \, \bar{x}$.  Thus,
		\[ \begin{array}{lll}
		e(x,u) & = & \left[ \, \eta \, \left( \,  u_3^2 + u_4^2 \, \right) + | \, \eta \, |
		\, \left( \,  u_3^2 - u_4^2 \, \right) \, \right] \, \| \, \bar{x} \, \|_2^2
		\\ [0.1in]
		& = & \left\{ \begin{array}{ll}
		2 \, \eta \, u_3^2 \, \| \, \bar{x} \, \|_2^2 & \mbox{if $\eta > 0$} \\ [5pt]
		2 \, \eta \, u_4^2 \, \| \, \bar{x} \, \|_2^2 & \mbox{if $\eta < 0$}
		\end{array} \right. ,
		\end{array} \]
		which implies $\mbox{sgn}(e(x;u)) = \mbox{sgn}( \eta )$.  Hence, we have
		\[ \begin{array}{lll}
		0 \, = \, \nabla \calM(x) & = & \displaystyle\frac{1}{p} \,\bar{x} \, \left[ \,
		\left\{ \, 2 \, \eta - \mbox{sgn}(\eta) \, \displaystyle{
			\frac{\eta}{| \, \eta \, |}
		} \, \E_{\wt{u}}\left[ \, \wt{u}_3^{\, 2} - \wt{u}_4^{\, 2} \, \right] \, \right\}
		- \mbox{sgn}(\eta) \, \E_{\wt{u}}\left[ \, \wt{u}_3^{\, 2} + \wt{u}_4^{\, 2} \, \right]
		\, \right] \\ [0.2in]
		& = & \left\{ \begin{array}{ll}
		\displaystyle\frac{2}{p} \, \, \bar{x} \, \left[ \, \eta - \E_{\wt{u}} \left[ \,
		\wt{u}_3^{\, 2} \, \right] \, \right] & \mbox{if $\eta > 0$} \\ [0.2in]
		\displaystyle\frac{2}{p} \, \, \bar{x} \, \left[ \, \eta + \E_{\wt{u}} \left[ \,
		\wt{u}_4^{\, 2} \, \right] \, \right] & \mbox{if $\eta < 0$}
		\end{array} \right.,
		\end{array} \]
		which implies $\eta = \pm 1$.  Consequently, we have proved that if
		$\E_{\wt{u}}\left\{ \, \mbox{sgn}(e(x;\wt{u})) \, ( \, \wt{u}_3^{\, 2} +
		\wt{u}_4^{\, 2} \, ) \, \right\} \neq 0$,
		then $x = \pm \bar{x}$.  Suppose that
		$\E_{\wt{u}}\left\{ \, \mbox{sgn}(e(x;\wt{u})) \, ( \, \wt{u}_3^{\, 2} +
		\wt{u}_4^{\, 2} \, ) \, \right\} = 0$ and also $x$ is not proportional to $\pm \bar{x}$.
		Write $e(x;u) = z_+ \, u_3^2 - z_- \, u_4^2$, where both
		$z_{\pm} \triangleq \| \, x \, \|_2 \| \, \bar{x} \, \|_2 \pm \bar{x}^{\top} x$
		are nonnegative scalars. Suppose $\bar{x}^\top x >0$,  then $z_+ > z_- > 0$. By letting
		$\II( \bullet )$ be the indicator of a (random) event, we deduce
		\[
		\begin{array}{lll}
		0 & = & \E_{\wt{u}}\left\{ \, \mbox{sgn}(e(x;\wt{u})) \, ( \, \wt{u}_3^{\, 2} +
		\wt{u}_4^{\, 2} \, ) \, \right\} \\ [0.1in]
		& = & \E_{\wt{u}}\left\{ \, \II\left( \, z_+ \, \wt{u}_3^{\, 2} - z_- \, \wt{u}_4^{\, 2}
		\, > \, 0 \, \right) \, ( \, \wt{u}_3^{\, 2} + \wt{u}_4^{\, 2} \, ) \, \right\}
		- \E_{\wt{u}}\left\{ \, \II\left( \, z_+ \, \wt{u}_3^{\, 2} - z_- \, \wt{u}_4^{\, 2} \,
		< \, 0 \, \right) \, ( \, \wt{u}_3^{\, 2} + \wt{u}_4^{\, 2} \, ) \, \right\}
		\\ [0.1in]
		& = & \E_{\wt{u}}\left\{ \, \II\left( \, z_+ \, \wt{u}_3^{\, 2} - z_- \, \wt{u}_4^{\, 2}
		\, > \, 0 \, \right) \, ( \, \wt{u}_3^{\, 2} + \wt{u}_4^{\, 2} \, ) \, \right\}
		- \E_{\wt{u}}\left\{ \, \II\left( \, z_- \, \wt{u}_3^{\, 2} - z_+ \, \wt{u}_4^{\, 2}
		\, < \, 0 \, \right) \, ( \, \wt{u}_3^{\, 2} + \wt{u}_4^{\, 2} \, ) \, \right\}
		\\ [0.1in]
		& & + \, \E_{\wt{u}}\left\{ \, \II\left( \, \displaystyle{
			\frac{z_-}{z_+}
		} \, \wt{u}_3^{\, 2} \, \leq \, \wt{u}_4^{\, 2} \, \leq \, \displaystyle{
			\frac{z_+}{z_-}
		} \, u_3^2 \, \right) \, ( \, \wt{u}_3^{\, 2} + \wt{u}_4^{\, 2} \, ) \, \right\}
		\\ [0.1in]
		& = &  \E_{\, \wt{u}}\left\{ \, \II\left( \, \displaystyle{
			\frac{z_-}{z_+}
		} \, \wt{u}_3^{\, 2} \, \leq \, \wt{u}_4^{\, 2} \, \leq \, \displaystyle{
			\frac{z_+}{z_-}
		} \, u_3^2 \, \right) \, ( \, \wt{u}_3^{\, 2} + \wt{u}_4^{\, 2} \, ) \, \right\},
		\end{array} \]
		where the last equality holds because $\wt{u}_3$ and $\wt{u}_4$ are independent and
		have the same distribution.	Thus $ \II\left( \, \displaystyle{
			\frac{z_-}{z_+}
		} \, \wt{u}_3^{\, 2} \, \leq \, \wt{u}_4^{\, 2} \, \leq \, \displaystyle
		\frac{z_+}{z_-}
		\, \wt{u}_3^{\, 2} \, \right) \, = \, 0$ almost surely.  This implies $z_+ = z_-$,
		which is equivalent to $\bar{x}^\top x = 0$.  We thus get a contradiction.  Similarly,
		one can show that $\bar{x}^\top x < 0$ cannot hold.  Therefore, we get
		$\bar{x}^\top x = 0$.  Then
		$e(x;u) = \| \, \bar{x} \|_2 \| x \|_2 \left( \, u_3^2 - u_4^2 \, \right)$ and
		\[
		\begin{array}{lll}
		0 & = & \nabla \calM(x) \\ [0.1in]
		& = &  \displaystyle\frac{2}{p} \, \, \left[ \, 1 - \displaystyle\frac{1}{2} \,
		\E_{\wt{u}}\left\{ \, \mbox{sgn}( \wt{u}_3^{\, 2} - \wt{u}_4^{\, 2} ) \,
		\displaystyle{
			\frac{\| \, \bar{x} \, \|_2}{\| \, x \, \|_2}
		} \, ( \, \wt{u}_3^{\, 2} - \wt{u}_4^{\, 2} \, ) \, \right\} \, \right] \, x -
		\displaystyle\frac{1}{p}\,\underbrace{\E_{\wt{u}}\left\{ \, \mbox{sgn}( \wt{u}_3^{\, 2}
			- \wt{u}_4^{\, 2} ) \, ( \, \wt{u}_3^{\, 2} + \wt{u}_4^{\, 2} \, ) \, \right\}}_{
			\mbox{= 0 by symmetry}} \, \bar{x} \\ [0.1in]
		& = &  \displaystyle\frac{2}{p} \, \, \left[ \, 1 - \displaystyle\frac{1}{2} \,
		\displaystyle{
			\frac{\| \, \bar{x} \, \|_2}{\| \, x \, \|_2}
		} \, \E_{\wt{u}} \left[ \, | \, \wt{u}_3^{\, 2} - \wt{u}_4^{\, 2} \, | \, \right]
		\, \right] \, x \, = \, \frac{2}{p}\, \left[ \, 1 - \displaystyle{
			\frac{2}{\pi}
		} \, \displaystyle{
			\frac{\| \, \bar{x} \, \|_2}{\| \, x \, \|_2}
		} \, \right] \, x.
		\end{array} \]
		Thus $\| \, x \, \| =  \displaystyle{
			\frac{2}{\pi}
		} \, \| \, \bar{x} \, \|_2$ as desired.  The last assertion of the proposition
		follows readily by substituting the properties of a d-stationary point into the
		objective function $\calM(x)$ obtained in (\ref{eq:expression phase}).
	\end{proof}
	
	In what follows, we apply our established theory in the previous sections to this
	phase retrieval problem.  First we demonstrate that every suboptimal stationary
	solution of the problem \eqref{eq:phase population} is a saddle point, neither local
	minimizer or maximizer by the following proposition.
	Notice that based on  \cite[Lemma 5.6]{Dima18}, we can further write \eqref{eq:expression phase} as
				\begin{equation}\label{eq:phase population2}
				 \begin{array}{l}
		p \, \calM(x) \, = \,  \| \, x \, \|_2^2 + \| \, \bar{x} \, \|_2^2 + p \, \sigma^2 +
		2 \, \bar{x}^{\top} x\\ [0.1in]
		\epc - \, \displaystyle{
			\frac{4}{\pi}
		} \left[ \, 2x^\top \bar{x} \,
		\arctan{\underbrace{\sqrt{\frac{\| x \|_2 \| \bar{x} \|_2 + \bar{x}^{\top}x}{
						\| x \|_2 \| \bar{x} \|_2 - \bar{x}^{\top}x}}}_{\mbox{denoted $g(x)$}}} +
		\sqrt{\left(\bar{x}^{\top}x + \| x \|_2 \| \bar{x} \|_2\right)
			\left( \| x \|_2 \| \bar{x} \|_2 - \bar{x}^{\top}x \right)} \right].
		\end{array}
		\end{equation}

	\begin{proposition}\label{local is global for 11}\rm
		Let  $\widetilde{\zeta}$ follow the standard $p$-dimensional multivariate
		Gaussian distribution and
		$\xi = \displaystyle\frac{\widetilde{\zeta}}{\| \widetilde{\zeta} \|_2}$.
		Then any point in
		$\calD^{\, \prime} \,\triangleq \, \left\{ x \, \mid \, \bar{x}^{\top} x = 0 \mbox{ and }
		\| \, x \, \|_2 = \displaystyle{
			\frac{2}{\pi}
		} \, \| \, \bar{x} \, \|_2 \right\}$
		is a  saddle point of 	(\ref{eq:phase population}).
	\end{proposition}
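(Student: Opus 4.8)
\textit{Proof proposal.} The plan is to show that every $x^{\star}\in\calD^{\, \prime}$ is a classical critical point of $\calM$ at which $\calM$ strictly decreases along one line through $x^{\star}$ and strictly increases along another; since $x^{\star}$ is a stationary solution of \eqref{eq:phase population} by Proposition~\ref{pr:phase stationary} (so that $\nabla\calM(x^{\star})=0$), it will then follow that $x^{\star}$ is neither a local minimizer nor a local maximizer, i.e.\ a saddle point. Throughout I would work with the closed form \eqref{eq:phase population2}, which is $C^{\infty}$ in a neighborhood of $x^{\star}$: indeed $\bar{x}^{\top}x^{\star}=0$ lies strictly between $\pm\,\| \, x^{\star} \, \|_2\| \, \bar{x} \, \|_2$ (because $x^{\star}\neq 0$ and $x^{\star}$ is not parallel to $\bar{x}$), so the factor $\arctan g(x)$ stays away from its singular regime and the radicand $\| \, x \, \|_2^{2}\| \, \bar{x} \, \|_2^{2}-(\bar{x}^{\top}x)^{2}$ stays positive near $x^{\star}$. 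Moreover, since $\| \, x^{\star} \, \|_2=\tfrac{2}{\pi}\| \, \bar{x} \, \|_2<\| \, \bar{x} \, \|_2$ and $X$ was taken to contain $\mathbb{B}_{\| \bar{x} \|}(0_p)$ strictly, $x^{\star}$ lies in the interior of $X$, so the small perturbations of $x^{\star}$ used below stay feasible and ``local'' is genuinely local.

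For the direction of negative curvature I would take $v_{1}=\bar{x}$ and set $\phi_{1}(t)\triangleq\calM(x^{\star}+t\,\bar{x})$. Along this line $\bar{x}^{\top}(x^{\star}+t\bar{x})=t\| \, \bar{x} \, \|_2^{2}$ and $\| \, x^{\star}+t\bar{x} \, \|_2^{2}=\| \, x^{\star} \, \|_2^{2}+t^{2}\| \, \bar{x} \, \|_2^{2}$ (using $\bar{x}\perp x^{\star}$), so the radical term in \eqref{eq:phase population2} is the constant $\| \, x^{\star} \, \|_2\| \, \bar{x} \, \|_2$ and the only nontrivial contribution is the $\arctan g$ term. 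A one-variable computation, whose key simplification is that differentiating $g$ gives $\dfrac{g^{\prime}(t)}{1+g(t)^{2}}=\dfrac{\| x^{\star} \|_2\,\| \bar{x} \|_2}{2\bigl(\| x^{\star} \|_2^{2}+t^{2}\| \bar{x} \|_2^{2}\bigr)}$, yields $\phi_{1}^{\prime}(0)=0$ (re-confirming stationarity) and, after substituting $\| \, x^{\star} \, \|_2=\tfrac{2}{\pi}\| \, \bar{x} \, \|_2$,
\[
\phi_{1}^{\prime\prime}(0)\,=\,\frac{1}{p}\left(2\,\| \bar{x} \|_2^{2}-\frac{8\,\| \bar{x} \|_2^{3}}{\pi\,\| x^{\star} \|_2}\right)\,=\,-\,\frac{2\,\| \bar{x} \|_2^{2}}{p}\,<\,0 .
\]
Hence $\calM(x^{\star}\pm t\bar{x})<\calM(x^{\star})$ for all small $t\neq 0$, so $x^{\star}$ is not a local minimizer. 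For the direction of positive curvature I would take the radial direction $v_{2}=x^{\star}$ and set $\phi_{2}(t)\triangleq\calM\bigl((1+t)\,x^{\star}\bigr)$. Here $\bar{x}^{\top}\bigl((1+t)x^{\star}\bigr)\equiv 0$, so \eqref{eq:phase population2} collapses to the pure quadratic $p\,\calM\bigl((1+t)x^{\star}\bigr)=(1+t)^{2}\| x^{\star} \|_2^{2}+\| \bar{x} \|_2^{2}+p\sigma^{2}-\tfrac{4}{\pi}(1+t)\| x^{\star} \|_2\| \bar{x} \|_2$, for which $\phi_{2}^{\prime}(0)=\tfrac{1}{p}\bigl(2\| x^{\star} \|_2^{2}-\tfrac{4}{\pi}\| x^{\star} \|_2\| \bar{x} \|_2\bigr)=0$ and $\phi_{2}^{\prime\prime}(0)=\tfrac{2\| x^{\star} \|_2^{2}}{p}>0$; thus $\calM\bigl((1+t)x^{\star}\bigr)>\calM(x^{\star})$ for all small $t\neq 0$, so $x^{\star}$ is not a local maximizer either, and $x^{\star}$ is a saddle point.

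The actual work reduces to these two elementary calculus computations, and they are short precisely because along each chosen line one of the two basic scalars $\bar{x}^{\top}x$ and $\| \, x \, \|_2$ behaves trivially, which keeps \eqref{eq:phase population2} from becoming unwieldy. I do not anticipate a genuine obstacle; the one point needing care is the justification that \eqref{eq:phase population2} is smooth near $x^{\star}$ (needed for the second-order Taylor expansions along the two lines), and this is exactly why the restriction $\bar{x}^{\top}x^{\star}=0$ — keeping us safely inside the smooth regime of the $\arctan g$ term — is used. If one preferred to bypass \eqref{eq:phase population2}, the same two curvatures could be obtained by differentiating twice under the expectation in \eqref{eq:expression phase}, using the dominated-convergence argument already invoked in the proof of Proposition~\ref{pr:phase stationary}, but the closed form is the cleaner route.
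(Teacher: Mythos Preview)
Your proof is correct and takes a genuinely different route from the paper's. The paper computes the full Hessian $\nabla^{2}\calM(x)$ at $x\in\calD^{\,\prime}$, shows $\mathrm{trace}\bigl(\nabla^{2}\calM(x)\bigr)=0$, and then argues indirectly that the Hessian cannot vanish (because $\nabla\calM$ is not identically zero on any neighborhood of $x$), so a symmetric matrix with zero trace and not identically zero must have eigenvalues of both signs. Your argument instead exhibits two explicit directions, $\bar{x}$ and $x^{\star}$, and computes the one-dimensional second derivatives $\phi_{1}^{\prime\prime}(0)=-2\|\bar{x}\|_2^{2}/p<0$ and $\phi_{2}^{\prime\prime}(0)=2\|x^{\star}\|_2^{2}/p>0$ directly. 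Your route is more elementary and constructive: it avoids assembling the full Hessian (which in the paper involves $\nabla g(x)$ and several matrix terms) and sidesteps the indirect ``nonzero Hessian'' step. The paper's approach, on the other hand, yields the slightly stronger structural information that the Hessian is traceless at every point of $\calD^{\,\prime}$, not merely indefinite. Both proofs rest on the same closed form \eqref{eq:phase population2} and on the smoothness of $\calM$ near $\calD^{\,\prime}$, which you justify correctly.
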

	
	\begin{proof}
			Provided that $x$ is not zero and $x\neq \pm \bar{x}$, we can deduce from \eqref{eq:phase population2} that
		\begin{equation*}
		\begin{aligned}
		\nabla \calM(x) & = \frac{1}{p}\left( 2x + 2 \bar{x}- \frac{4}{\pi}\left[2\bar{x}
		\arctan{(g(x))} + \frac{2x^\top \bar{x}}{1 + g^2(x)} \nabla g(x) +
		\frac{\|\bar{x}\|^2 x - (x^\top \bar{x})\bar{x}}{\sqrt{\| x \|^2_2 \| \bar{x} \|^2_2
				- (x^\top \bar{x})^2}}\right]\right)
		\end{aligned}
		\end{equation*}
		and, letting $I_p$ denote the identity matrix of order $p$,
		\begin{equation*}
		\begin{array}{ll}
		p \, \nabla^2 \calM(x) \, = & 2 \, I_p -\displaystyle{
			\frac{4}{\pi}
		} \, \left[ \displaystyle{
			\frac{2\bar{x} \, \nabla g(x)^\top}{1 + g^2(x)}
		} + \displaystyle{
			\frac{(\|\bar{x} \|_2^2 \, I_p - \bar{x}\bar{x}^\top)\sqrt{
					\| x \|^2_2 \| \bar{x} \|^2_2 - (x^\top \bar{x})^2}}{\| x \|^2_2 \| \bar{x} \|^2_2
				- (x^\top \bar{x})^2}} \, \right] \\ [0.3in]
		& + \, \displaystyle{
			\frac{4}{\pi}
		} \, \left[ \, \displaystyle{
			\frac{\left[ \, \|\bar{x} \|_2^2 \, x - (\bar{x}^\top x) \bar{x})(\|\bar{x} \|_2^2 \, x
				- (\bar{x}^\top x) \, \bar{x} \, \right]^\top}{\left[ \, \| x \|^2_2 \| \bar{x} \|^2_2
				- (x^\top \bar{x})^2 \, \right]^\frac{3}{2}}} \, \right] \\ [0.4in]
		& - \, \displaystyle{
			\frac{4}{\pi}
		} \, \left[ \displaystyle{
			\frac{2\bar{x}\, (1+ g^2(x))\nabla g(x)^\top - 2 \bar{x}^\top x \,
				\nabla g^2(x) \nabla g(x)^\top }{( \, 1+g^2(x) \, )^2}
		} \, \right].
		\end{array}
		\end{equation*}
		Therefore, for any $x \in \mathcal{D}^{\, \prime}$, we have
		\begin{equation*}
		\begin{array}{lll}
		\nabla^2 \calM(x) & = & \displaystyle{
			\frac{1}{p}
		} \, \left( \, 2 \, I_p - 2I_p + \displaystyle{
			\frac{4}{\pi}
		} \, \left[ \, \displaystyle{
			\frac{\bar{x}\bar{x}^\top}{\| x \|_2 \| \bar{x} \|_2}
		} + \displaystyle{
			\frac{\| \bar{x}\|_2 x x^\top}{\|x \|_2^3}
		} \, \right] - \displaystyle{
			\frac{8}{\pi}
		} \, \bar{x} \, \nabla g(x)^\top \right) \\ [0.3in]
		& = & \displaystyle{
			\frac{1}{p}
		} \, \left( \, \displaystyle{
			\frac{4}{\pi}
		} \, \left[ \, \displaystyle{
			\frac{\bar{x}\bar{x}^\top}{\| x \|_2 \| \bar{x} \|_2} + \frac{\| \bar{x}\|_2 x x^\top}{
				\|x \|_2^3}
		} \, \right] - \displaystyle{
			\frac{8}{\pi}
		} \, \bar{x} \, \nabla g(x)^\top \right).
		\end{array}
		\end{equation*}
		By noting that $\bar{x}^\top \nabla  g(x) = \displaystyle\frac{\pi}{2}$ for any
		$x \in \calD^\prime$, the above equalities further yield
		\begin{equation*}
		\begin{aligned}
		\text{trace}\left( \nabla^2\, \calM(x)\right) & = \,
		\frac{1}{p}\left( 4 - \frac{8}{\pi} \nabla g(x)^\top\bar{x} \right) =
		\frac{1}{p}\left( 4 - 4 \right)= 0.
		\end{aligned}
		\end{equation*}
		It is easy to check that
		$\nabla \mathcal{M}(x)=\displaystyle\frac{2x}{p} \left(1-\frac{2\|\bar{x}\|_2}{
			\pi \|x\|_2}\right)$ for any $x$ orthogonal to $\bar{x}$. Thus,
		$\nabla\calM(x)$ is not constantly $0$ in the neighborhood of $x \in \calD^{\, \prime}$,
		which implies that there must exist a positive and a negative eigenvalues for the
		Hessian matrix $\nabla^2 \calM(x)$ for any $x\in \mathcal{D}^{\, \prime}$.
	\end{proof}
	
We remark that every d-stationary point of the empirical phase retrieval problem
\eqref{eq:phase empirical} is in fact its local minimizer
since the objective function is the composite of a convex function with a piecewise
linear function with a convex compact constraint \cite[Proposition 11]{CuiPangHongChang18}. Next, we will demonstrate
that every empirical $\varepsilon$-strong d-stationary point $x^N$ of phase retrieval problem \eqref{eq:phase empirical} converges
to $\calD_0 = \left\{\pm \bar{x}  \right\}$ at the rate of $\frac{1}{\sqrt{N}}$. As we know $\calD_0$ is the set of all global minimizers
of the problem \eqref{eq:phase population}. To show this, we need the following lemma.

\begin{lemma}\label{lm: strong local min}\rm
The population amplitude-based phase retrieval problem \eqref{eq:phase population} is locally strong convex at the nonzero vectors $\pm \bar{x}$.
\end{lemma}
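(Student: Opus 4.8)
The strategy is to show that, on a small ball around $\bar{x}$, $\calM$ coincides with a positive-definite quadratic in $x-\bar{x}$ plus a remainder that is cubic in $\|x-\bar{x}\|_2$; quadratic growth at $\bar{x}$ --- which is exactly the meaning of ``locally strongly convex at $\bar{x}$'' as used in Proposition~\ref{pr:strong local} --- then follows. By the orthogonal invariance (F2) of $\wt{\xi}$ and the resulting identity $\calM_{\bar{x}}(x) = \calM_{Q^\top\bar{x}}(Q^\top x)$ for any orthogonal $Q$, we may assume $\bar{x} = \|\bar{x}\|_2\, e_1$; the case $-\bar{x}$ will follow from $\calM(-x)=\calM(x)$, which is immediate since $|(-x)^\top\wt{\xi}| = |x^\top\wt{\xi}|$. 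Writing $x = x_1 e_1 + x_\perp$ with $e_1^\top x_\perp = 0$, and recalling from \eqref{eq:expression phase} that, with $z_\pm \triangleq \|x\|_2\|\bar{x}\|_2 \pm \bar{x}^\top x = \|\bar{x}\|_2(\|x\|_2 \pm x_1) \ge 0$ and $e(x;\wt{u}) = z_+\wt{u}_3^2 - z_-\wt{u}_4^2$ as in \eqref{eq:zero critical} for i.i.d.\ standard Gaussians $\wt{u}_3,\wt{u}_4$,
\[
\calM(x) \, = \, \frac{1}{p}\big(\|x\|_2^2 + \|\bar{x}\|_2^2\big) + \sigma^2 - \frac{1}{p}\,\E_{\wt{u}}\big[\,|e(x;\wt{u})|\,\big].
\]

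The first step is to peel off the linear part of $|e|$. On a neighborhood of $\bar{x}$ we have $z_+ \ge \|\bar{x}\|_2^2 > 0$ and $z_- \ge 0$, so $|e(x;u)| = e(x;u) + 2\max(z_- u_4^2 - z_+ u_3^2, 0)$, and since $\E[\wt{u}_3^2] = \E[\wt{u}_4^2] = 1$ we obtain $\E_{\wt{u}}[|e(x;\wt{u})|] = (z_+-z_-) + 2E(x) = 2\,\bar{x}^\top x + 2\,E(x)$, where $E(x) \triangleq \E_{\wt{u}}[\max(z_-\wt{u}_4^2 - z_+\wt{u}_3^2, 0)] \ge 0$. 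Substituting, and using $\|x\|_2^2 + \|\bar{x}\|_2^2 - 2\bar{x}^\top x = \|x-\bar{x}\|_2^2$ together with $\calM(\bar{x}) = \sigma^2$, yields the exact local identity
\[
\calM(x) - \calM(\bar{x}) \, = \, \frac{1}{p}\,\|x - \bar{x}\|_2^2 \, - \, \frac{2}{p}\,E(x),
\]
so everything reduces to proving that $E(x)$ is of order $\|x-\bar{x}\|_2^3$.

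For that I would use the crude bound $\max(z_- u_4^2 - z_+ u_3^2, 0) \le z_- u_4^2\,\II\big(u_3^2 < (z_-/z_+)\,u_4^2\big)$; conditioning on $\wt{u}_4$ and bounding the density of $\wt{u}_3$ by $(2\pi)^{-1/2}$ gives $E(x) \le \sqrt{2/\pi}\,\E[|\wt{u}_4|^3]\, z_-^{3/2}\, z_+^{-1/2}$. Near $\bar{x}$ one has $z_- = \|\bar{x}\|_2(\|x\|_2 - x_1) = \|\bar{x}\|_2\,\|x_\perp\|_2^2 / (\|x\|_2 + x_1) \le C_1\,\|x - \bar{x}\|_2^2$ (because $\|x_\perp\|_2 \le \|x - \bar{x}\|_2$ and $\|x\|_2 + x_1 \ge \|\bar{x}\|_2$ there) and $z_+ \ge \|\bar{x}\|_2^2$, hence $E(x) \le C_2\,\|x - \bar{x}\|_2^3$ on a small ball $\mathbb{B}_{\delta_0}(\bar{x})$. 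Choosing $\delta \in (0,\delta_0]$ with $2C_2\delta \le \tfrac{1}{2}$ then gives, for all $x \in \mathbb{B}_\delta(\bar{x})$,
\[
\calM(x) - \calM(\bar{x}) \, \ge \, \frac{1}{p}\,\|x - \bar{x}\|_2^2 - \frac{2C_2}{p}\,\|x - \bar{x}\|_2^3 \, \ge \, \frac{1}{2p}\,\|x - \bar{x}\|_2^2,
\]
which is the asserted local strong convexity at $\bar{x}$; the statement at $-\bar{x}$ follows by the evenness of $\calM$.

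The delicate point is precisely the estimate $E(x) = \mathrm{O}(\|x-\bar{x}\|_2^3)$: since $z_-$ itself is only $\Theta(\|x-\bar{x}\|_2^2)$, the naive bound $E(x) \le z_-$ is too weak to be dominated by the $\tfrac{1}{p}\|x-\bar{x}\|_2^2$ term, and one genuinely has to exploit the extra factor $\sqrt{z_-/z_+}\to 0$ coming from the fact that the probability of the event $\{z_-\wt{u}_4^2 > z_+\wt{u}_3^2\}$ vanishes as $x\to\bar{x}$. (Deriving the Hessian directly from the $\arctan$ closed form \eqref{eq:phase population2} is an alternative route but is more cumbersome, since $g(x)\to\infty$ at $\pm\bar{x}$ and the apparent singularities must be shown to cancel.)
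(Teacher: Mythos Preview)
Your proof is correct and takes a genuinely different route from the paper's. The paper works from the closed-form expression \eqref{eq:phase population2}: it introduces the angle $\theta(x)$ between $x$ and $\bar{x}$, simplifies the $\arctan$ and square-root terms via the identities $\arctan|\cot(\theta/2)| = \tfrac{\pi}{2} - \tfrac{|\theta|}{2}$ and $\sqrt{(\|x\|\|\bar{x}\|)^2 - (\bar{x}^\top x)^2} = |\sin\theta|\,\|x\|\|\bar{x}\|$, and then reduces the inequality $\calM(x)-\calM(\bar{x}) \ge \tfrac{\gamma}{p}\|x-\bar{x}\|^2$ to showing that a scalar function $q_\gamma(\theta) = 1-\gamma + \cos\theta\,(\gamma-1+\tfrac{2}{\pi}|\theta|) - \tfrac{2}{\pi}|\sin\theta|$ is nonnegative on $[-\tfrac{\pi}{2}(1-\gamma),\,\tfrac{\pi}{2}(1-\gamma)]$, which it checks by differentiating. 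Your approach instead stays with the Gaussian representation \eqref{eq:expression phase}: the decomposition $|e| = e + 2\max(-e,0)$ peels off the full quadratic $\tfrac{1}{p}\|x-\bar{x}\|^2$ exactly, and the remainder $E(x)$ is bounded by a tail probability estimate (density bound on $\wt{u}_3$) yielding the cubic order $z_-^{3/2}z_+^{-1/2} = O(\|x-\bar{x}\|^3)$. Your route avoids the trigonometric bookkeeping and the apparent singularity of $g(x)\to\infty$ in \eqref{eq:phase population2}, and makes transparent why the constant in front of the quadratic is essentially $\tfrac{1}{p}$; the paper's route, on the other hand, gives an explicit tradeoff between the strong-convexity constant $\gamma$ and the radius $\delta$ (namely $\delta = \tfrac{1-\sin(\pi\gamma/2)}{1+\sin(\pi\gamma/2)}\|\bar{x}\|$), which your bound does not produce as cleanly.
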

	
\begin{proof}
We first demonstrate that the objective of the population problem $\calM(x)$  is locally strong convex at $ \bar{x}$.  This is equivalent to showing
that there exist positive scalars $\delta$ and $\gamma$ such that for any $x\in \mathbb{B}_{\delta}(\bar{x})$,
\[
\calM(x) - \calM(\bar{x}) \, \geq\, \frac{\gamma}{p} \,\| x - \bar{x}\|_2^2.
\]
Based on the expression of $\calM(x)$ in \eqref{eq:phase population2}, it suffices to show the following inequality for $x\in \mathbb{B}_{\delta}(\bar{x})$:
\begin{equation}\label{phase:proof1}
\begin{array}{ll}
 (1-\gamma)(\| \, x\|_2 -\|\,\bar{x}\, \|_2)^2        +
		2 (1+\gamma)\, \bar{x}^{\top} x  + 2(1-\gamma)\|x\|_2\|\bar{x}\|_2\\[0.15in]
	- \, \displaystyle{
			\frac{4}{\pi}
		} \left[ \, 2x^\top \bar{x} \,
		\arctan{\sqrt{\frac{\| x \|_2 \| \bar{x} \|_2 + \bar{x}^{\top}x}{
						\| x \|_2 \| \bar{x} \|_2 - \bar{x}^{\top}x}}} +
		\sqrt{\left(\bar{x}^{\top}x + \| x \|_2 \| \bar{x} \|_2\right)
			\left( \| x \|_2 \| \bar{x} \|_2 - \bar{x}^{\top}x \right)} \right] \geq 0.
			\end{array}
\end{equation}
To proceed, we denote by $\theta(x)$ the angle between $x$ and $\bar{x}$, i.e.,
\[
\cos \theta(x) = \frac{x^\top \bar{x}}{\|x\|_2 \|\bar{x}\|_2}.
\]
By shrinking the neighborhood $\mathbb{B}_{\delta}(\bar{x})$ if necessary, we may assume without loss of generality that $\theta(x)\in \left(-\displaystyle\frac{\pi}{2}, \, \displaystyle\frac{\pi}{2}\right)$.
Let $\gamma<1$ be arbitrary and \[\delta \, = \,\displaystyle\frac{1-\sin\left(\displaystyle\frac{\pi}{2}\gamma\right)}{1+\sin\left(\displaystyle\frac{\pi}{2}\gamma\right)}\;\|\bar{x}\|_2.\] Since $x\in \mathbb{B}_{\delta}(\bar{x})$, we have
\[
\begin{array}{rl}
\cos \theta(x) = \displaystyle\frac{x^\top \bar{x}}{\|x\|_2 \|\bar{x}\|_2}\geq \displaystyle\frac{\|x\|_2^2 + \|\bar{x}\|_2^2 - \delta}{2\|x\|_2\|\bar{x}\|_2} \geq \displaystyle\frac{(\|\bar{x}\|_2 - \delta)^2 + \|\bar{x}\|_2^2 - \delta^2}{2(\|\bar{x}\|_2 + \delta)\|\bar{x}\|_2} = 1-\frac{2\delta}{\|\bar{x}\|_2 + \delta} = \sin\left(\frac{\pi}{2}\gamma\right),
\end{array}
\]
which implies that $\theta(x)\in \left[-\displaystyle\frac{\pi}{2}(1-\gamma),\, \displaystyle\frac{\pi}{2}(1-\gamma)\right]$.
Direct computation shows that
\[\left\{\begin{array}{ll}
\arctan{\sqrt{\displaystyle\frac{\| x \|_2 \| \bar{x} \|_2 + \bar{x}^{\top}x}{
\| x \|_2 \| \bar{x} \|_2 - \bar{x}^{\top}x}}} = \arctan \sqrt{\displaystyle\frac{1+\cos \theta(x)}{1-\cos\theta(x)}} \, = \,
\arctan \left(\,\left|\,\cot \displaystyle\frac{\theta(x)}{2}\,\right|\,\right) = \displaystyle\frac{\pi}{2} - \frac{|\theta(x)|}{2}; \\[0.25in]
\sqrt{\left(\bar{x}^{\top}x + \| x \|_2 \| \bar{x} \|_2\right)
\left( \| x \|_2 \| \bar{x} \|_2 - \bar{x}^{\top}x \right)} = |\,\sin\theta(x)\,| \,\|x\|_2\|\bar{x}\|_2.
\end{array}\right.
\]
Therefore, the inequality \eqref{phase:proof1} is equivalent to
\[
(1-\gamma)(\| x\|_2 -\| \bar{x} \|_2)^2 + 2\| x\|_2\|\bar{x} \|_2\left(\underbrace{1-\gamma + \cos\theta(x)\left(\gamma -1 + \frac{2}{\pi}|\theta(x)|\,\right)
- \frac{2}{\pi}|\,\sin\theta(x)\,|\,}_{\mbox{denoted $q_{\gamma}\circ \theta(x)$}}\right)\geq 0.
\]
Notice that $q_{\gamma}(0)=0$ and
\[
q_\gamma^{\, \prime}(\theta) \, = \, \sin \theta \left(\,  1-\gamma -\frac{2\theta}{\pi} \,\right) \geq 0 \epc\mbox{if} \;\,
\theta\in \left(\, 0\,,\, \frac{\pi}{2}(1-\gamma) \,\right].
\]
Therefore, $q_{\gamma}(\theta)\geq 0$ for all $\theta\in \left[ \, 0, \displaystyle\frac{\pi}{2}(1-\gamma) \, \right]$. Since
$q_{\gamma}(\theta) = q_{\gamma}(-\theta)$, we further obtain that $q_{\gamma}(\theta)\geq 0$ for any
$\theta\in \left[-\displaystyle\frac{\pi}{2}(1-\gamma), \, \displaystyle\frac{\pi}{2}(1-\gamma)\right]$.
This proves the inequality \eqref{phase:proof1} for any $x\in \mathbb{B}_{\delta}(\bar{x})$.  Similarly one can show the local strong convexity
of $\calM$ near $-\bar{x}$.
\end{proof}
	
\begin{theorem}\label{thm: statistical accuracy}\rm
Let $x^N$ be a $\varepsilon$-strong d-stationary of phase retrieval problem \eqref{eq:phase empirical}.  Suppose there is no stationary solution on the
boundary of $X$ of (\ref{eq:phase population}), then
\[
\sqrt{N}\text{dist}(x^N, \calD_0) = O_{\IP_\infty}(1).
\]
\end{theorem}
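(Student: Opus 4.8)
The plan is to combine the consistency result of Theorem~\ref{th:main}---sharpened for this problem by excluding the suboptimal stationary set---with the $\sqrt{N}$-rate machinery of Theorem~\ref{thm: final theorem for rate}, the required local strong convexity being furnished by Lemma~\ref{lm: strong local min}. First I would record that the standing hypotheses hold for \eqref{eq:phase empirical}: writing $m(x;\wt{\xi})=|\,x^{\top}\wt{\xi}\,|=\max(x^{\top}\wt{\xi},\,-x^{\top}\wt{\xi})$ (so $f$ is this max and $g\equiv 0$) and $h(t;\wt{\zbld})=(\wt{\zbld}-t)^2$, the model is piecewise affine, which gives Assumption~\ref{assu: PA}; since $\wt{\xi}$ lies on the unit sphere, the functions $f_j(\cdot;\wt{\xi})=\pm x^{\top}\wt{\xi}$ have Lipschitz constant $1$, bounded and constant gradients, so Assumption~\ref{ass for ULLN}(a1)--(a3) hold. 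On the compact set $X$ all arguments of $h$ arising in $\calL$, in $r_{\bar{x};\varepsilon}^{\updownarrow}$, etc.\ lie in one fixed bounded interval, so replacing $h$ by a globally Lipschitz convex function agreeing with it there (which alters neither \eqref{eq:phase empirical} nor \eqref{eq:phase population}) yields $\mbox{Lip}_h(\wt{\zbld})=\mbox{O}(1+|\,\bar{x}^{\top}\wt{\xi}\,|+|\,\wt{\varepsilon}\,|)$, square integrable because $\wt{\varepsilon}$ has finite variance and $\wt{\xi}$ is bounded; thus Assumption~\ref{ass for ULLN} holds.

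Next is the consistency step. By Theorem~\ref{th:main}, $\mbox{dist}(x^N,\calD)\to 0$ almost surely, where $\calD$ is the d-stationary set of \eqref{eq:phase population}. Passing to any convergent subsequence $x^{N_k}\to x^{\infty}$ (possible since $X$ is compact), Corollary~\ref{cor: local min consistency} shows that $x^{\infty}$ is a local minimizer of \eqref{eq:phase population}. By Proposition~\ref{pr:phase stationary}, $\calD=\{\pm\bar{x}\}\cup\calD^{\,\prime}$ with $\calD^{\,\prime}$ as in Proposition~\ref{local is global for 11}, and the latter proposition shows that every point of $\calD^{\,\prime}$ is a saddle point, hence not a local minimizer (this is where I use that there is no stationary solution on the boundary of $X$, so that $\calD$ is exactly the zero set of $\nabla\calM$). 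Therefore $x^{\infty}\in\calD_0=\{\pm\bar{x}\}$, and since this holds along every convergent subsequence, $\mbox{dist}(x^N,\calD_0)\to 0$ almost surely.

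For the rate, fix $x^{\infty}\in\{\pm\bar{x}\}$. By Lemma~\ref{lm: strong local min}, $\calM$ is locally strongly convex at $x^{\infty}$; by Lemma~\ref{lemma:equiv epsilon} (valid since $m$ is piecewise affine), $R_{x^{\infty};\varepsilon^{\prime}}(x,x^{\infty})=\calM(x)$ for all $\varepsilon^{\prime}\in[0,\varepsilon/2]$ and all $x$ in a neighborhood of $x^{\infty}$, so $R_{x^{\infty};\varepsilon^{\prime}}(\cdot,x^{\infty})$ is locally strongly convex at $x^{\infty}$. Hence Proposition~\ref{pr:strong local} yields Assumption~\ref{assu:rate}(b1) with $q=2$; Proposition~\ref{entropy for lip} (using the first paragraph) yields (b2$^{\,\prime}$) with $\Phi_N(\delta)=\delta$; and (b3) holds with $\rho_N=\sqrt{N}$. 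Thus Corollary~\ref{coro: b3'} (equivalently Theorem~\ref{thm: final theorem for rate}) applies in a ball $\mathbb{B}_{\delta}(x^{\infty})$. To assemble the two centers, choose $\delta$ small so that $\mathbb{B}_{\delta}(\bar{x})$ and $\mathbb{B}_{\delta}(-\bar{x})$ are disjoint; then $\mbox{dist}(x^N,\calD_0)=\|\,x^N\mp\bar{x}\,\|_2$ on the event $x^N\in\mathbb{B}_{\delta}(\pm\bar{x})$, so for every $M>0$,
\[
\IP_{\infty}\!\left(\sqrt{N}\,\mbox{dist}(x^N,\calD_0)>M\right)\,\leq\,\IP_{\infty}\!\left(\sqrt{N}\,\|x^N-\bar{x}\|_2>M,\ x^N\in\mathbb{B}_{\delta}(\bar{x})\right)+\IP_{\infty}\!\left(\sqrt{N}\,\|x^N+\bar{x}\|_2>M,\ x^N\in\mathbb{B}_{\delta}(-\bar{x})\right)+\IP_{\infty}\!\left(x^N\notin\mathbb{B}_{\delta}(\bar{x})\cup\mathbb{B}_{\delta}(-\bar{x})\right).
\]
The last term tends to $0$ by the consistency step, and each of the first two is controlled by re-running the probability estimates in the proof of Theorem~\ref{thm:rate} with limit point $\bar{x}$ (resp.\ $-\bar{x}$), whose sets $S_{N,j}$ already restrict to the $\delta$-ball so that the localization is automatic; this gives $\sqrt{N}\,\mbox{dist}(x^N,\calD_0)=\mbox{O}_{\IP_{\infty}}(1)$.

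\textbf{Main obstacle.} The delicate point is the final assembly: because $\{x^N\}$ need only approach the two-point set $\calD_0$ and may oscillate between neighborhoods of $\bar{x}$ and $-\bar{x}$, one cannot invoke Theorem~\ref{thm: final theorem for rate} verbatim and must instead re-run its proof localized around each of $\pm\bar{x}$, checking that (b1), (b2$^{\,\prime}$), (b3) hold there with a single common set of constants. A secondary nuisance is the truncation of the quadratic loss $h$ needed to meet the global Lipschitz requirement of Assumption~\ref{ass for ULLN}, which has to be carried out so as to leave both optimization problems unchanged.
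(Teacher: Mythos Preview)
Your proposal is correct and follows essentially the same route as the paper: verify Assumption~\ref{ass for ULLN} and~\ref{assu: PA} for the phase model, invoke Theorem~\ref{th:main} and Corollary~\ref{cor: local min consistency} together with Proposition~\ref{local is global for 11} to rule out $\calD^{\,\prime}$ and get almost sure convergence to $\calD_0$, then feed Lemma~\ref{lm: strong local min} through Lemma~\ref{lemma:equiv epsilon} and Proposition~\ref{pr:strong local} into Theorem~\ref{thm: final theorem for rate}. Your decomposition swaps the roles of $f$ and $g$ relative to the paper (harmless), and your explicit localization around each of $\pm\bar{x}$ and your square-integrability argument for $\mbox{Lip}_h$ are in fact more careful than the paper's treatment, which simply fixes a single $x^{\infty}\in\calD_0$ and asserts $\mbox{Lip}_h$ is bounded.
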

\begin{proof}
		First, we check if Assumption \ref{ass for ULLN} holds. Under the setting of
		this phase retrieval problem, we know $f(x, \wt{\xi}) = 0$ and
		$g(x, \wt{\xi}) = \mbox{max} \left\{ \wt{\xi}^{\, \top} x, -\wt{\xi}^{\, \top} x
		\right\}$.  Then $\mbox{Lip}_f(\wt{\xi}) = 0$ and
		$\mbox{Lip}_g ( \wt{\xi} ) = \| \wt{\xi} \|_2 = 1$.  Assumption \ref{ass for ULLN} (a1)
		holds. It is clear that Assumption \ref{ass for ULLN} (a2) and (a3) hold because
		$\mbox{Lip}_{\nabla g}(\wt{\xi}) = 0 $ and $C_g(\wt{\xi}) = 1$.  In order to check
		Assumption \ref{ass for ULLN} (b), we can see
		\begin{equation}
		\begin{aligned}
		|h(t_1; \zbld) - h(t_2; \zbld)| = |t_1 + t_2 + 2 \zbld|\, |t_1 - t_2|.
		\end{aligned}
		\end{equation}
		Since we only consider $t_1 = \wt{\xi}^{\,\top} x_1$ and $\wt{\xi}^{\,\top} x_2$ for any $x_1, x_2 \in X$, we know $\mbox{Lip}_h (\zbld) = |t_1 + t_2 + 2 \zbld|$ is uniformly bounded. Thus Assumption \ref{ass for ULLN} holds. By Theorem \ref{th:main}, we know
		\[
		\IP_\infty \left(\lim_{N \rightarrow \infty}\text{dist}(x^N, \calD^\prime \cup \calD_0) = 0\right) =1
		\]
		Next, it is clear that Assumption \ref{assu: PA} holds as $g(x, \wt{\xi}\,)$ is a piecewise affine function. Then by Corollary \ref{cor: local min consistency}, suppose $x^N$ converges to $x^\infty$, as one of the elements in $\calD^\prime \cup \calD_0$, then $x^\infty$ must be a local minimizer of the problem \eqref{eq:phase population}. As we demonstrated in Proposition \ref{local is global for 11}, the set of all global minimizers is $\calD_0$, which is also the set of all local minimizers. Therefore, we can show that
		\[
		\IP_\infty \left(\lim_{N \rightarrow \infty}\text{dist}(x^N, \calD_0) = 0\right) =1.
		\]
		Next, we derive the convergence rate of $\text{dist}(x^N, \calD_0)$. It is enough to check if Assumption \ref{assu:rate}~(b1) holds. By Proposition \ref{pr:strong local}, we need to show there exist positive scalars $\delta$ and $c$ such that,
		\[
		R_{x^\infty;\varepsilon^{\prime}}(x,x^\infty)  - R_{x^\infty;\varepsilon^{\prime}}(x^\infty,x^\infty) \, \geq c\,\|x - x^\infty\|_2^2,
		\quad \forall\; x \, \in \, \mathbb{B}_\delta(x^\infty),
		\]
		where $x^\infty \in \calD_0$.
		By Lemma \ref{lemma:equiv epsilon}, it is equivalent to show
		\[
		\calM(x)  - \calM(x^\infty) \, \geq c\,\|x - x^\infty\|_2^2,
		\quad \forall\; x \, \in \, \mathbb{B}_\delta(x^\infty).
		\]
		This has been given by Lemma \ref{lm: strong local min}. Therefore, $ x^\infty \in \calD_0$ has the property of  local quadratic growth. By applying Theorem \ref{thm: final theorem for rate}, we can conclude the argument in the theorem that $\sqrt{N}\text{dist}(x^N, \calD_0) = O_{\IP_\infty}(1)$.

	\end{proof}
	
For the empirical phase retrieval problem \eqref{eq:phase empirical}, d-stationary
points can be obtained by the algorithm developed in \cite{CuiPangSen18}.  In what
follows, we report briefly the numerical results with the computational experiments
running this algorithm for solving (\ref{eq:phase empirical}) with various sample
sizes $N$.  Given the true signal $\bar{x}\in \mathbb{R}^{20}$ which we take to be
the vector of all ones, we generate samples $\{\xi_n\}_{n=1}^N$
from the uniform distribution on the sphere of a unit ball and compute the
corresponding  $z_n = |\bar{x}^\top \xi_n| + \varepsilon_n$ with $\varepsilon_n$ following
$\calN(0, 0.1)$. We first run the proposed algorithm in \cite{CuiPangSen18} with the
initial point in the set of all saddle points $\calD^{\, \prime}$.  Notice that many
developed algorithms in the existing literature requiring spectral initialization will fail in our numerical studies as the initial point is
orthogonal to the signal (\cite{ma2019optimization}).
We test the performance on various sample sizes ranging from 400 to 2000.
In the first figure below, it clearly shows that the computed empirical d-stationary
solutions are within the neighborhood of $\pm \bar{x}$.  Next, we compute the
$\ell_2$-distances between the computed empirical d-stationary solutions and
$\calD_0$ over 100 replications for various sample sizes. As we can see in the
second figure below, as the sample size $N$ increases, the $l_2$ error decreases
in the rate of nearly $\frac{1}{\sqrt{N}}$. This exactly matches our finding in
Theorem \ref{thm: statistical accuracy}. In addition, the objective values
$\calM_N(x_N)$ are around $0.01$, which is the specified noise level as
$\text{Var}[\varepsilon_n] = 0.01$ for $n = 1, \cdots, N$. Overall, our numerical findings are consistent with our developed theory.
\begin{figure}[H]
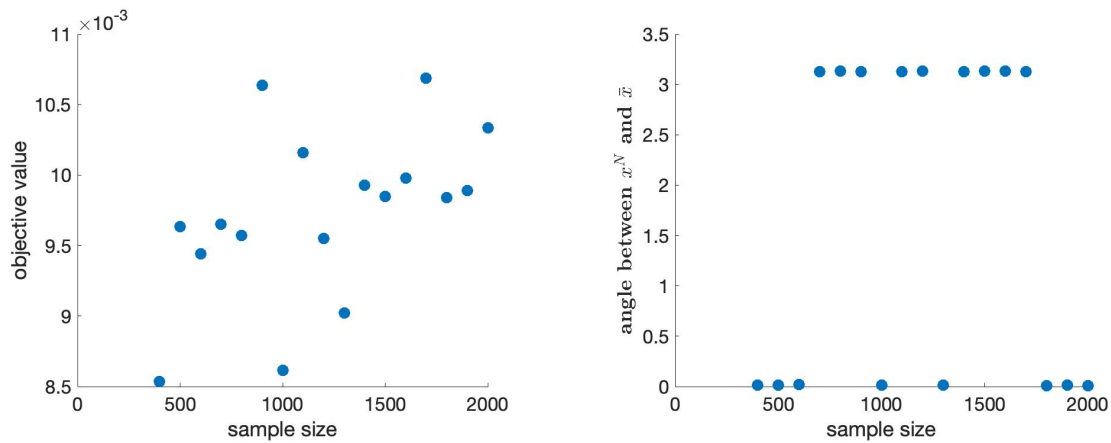

\begin{center}
\begin{minipage}{.45\textwidth}
\centering
\includegraphics[width=0.95\textwidth]{obj.pdf}
\end{minipage}
\epc	
\begin{minipage}{.45\textwidth}
\centering
\includegraphics[width=0.95\textwidth]{angle.pdf}
\end{minipage}
\end{center}
\caption{Results of the proposed algorithm in \cite{CuiPangSen18} on the phase retrieval problem \eqref{eq:phase empirical}. Left plot corresponds to the
stationary values of the computed d-stationary points. Right plot corresponds to angles between the computed d-stationary points and $\bar{x}$.
The initial points are all set to be in $\calD^\prime$. It is clear that the angles are close to either $0$ and $\pi$.}
\end{figure}
	
\begin{figure}[H]
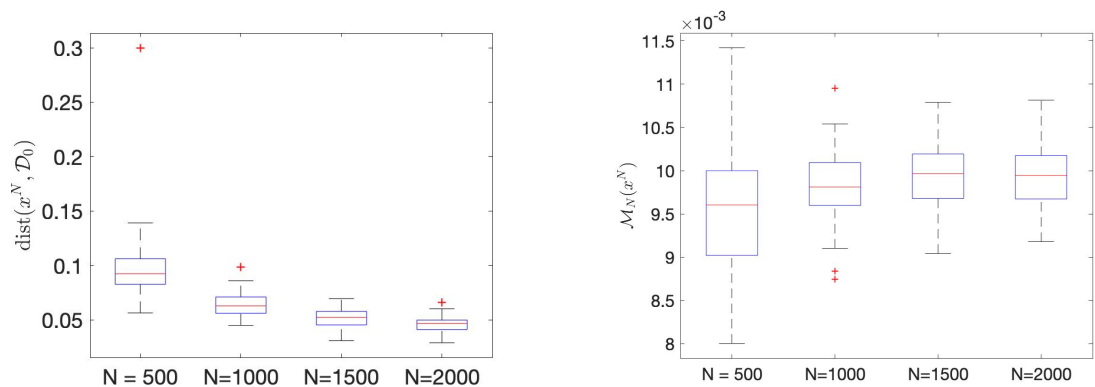

\begin{center}
\begin{minipage}{.45\textwidth}
\centering
\includegraphics[width=0.95\textwidth]{boxplotdist.pdf}
\end{minipage}
\epc
\begin{minipage}{.45\textwidth}
\centering
\includegraphics[width=0.95\textwidth]{boxplotobj.pdf}
\end{minipage}
\end{center}
\caption{Boxplots of $\ell_2$ errors between computed d-stationary solutions and $\pm \bar{x}$ and $\calM_N(x^N)$ for various sample sizes.
For each sample size, we repeat our simulation study for 100 times.}
\end{figure}
	
\section{Concluding Remarks}
	
Coupled nonconvex and nondifferentiable  statistical estimation problems present great challenges for both  rigorous computation and analysis.
Understanding and differentiating  properties of the computable solutions and establishing the asymptotics  of their statistical behaviors  are necessary tasks in
addressing such challenges. Our paper offers a first step in this direction by analyzing the relationship between a sharp kind of stationary solutions of the
empirical optimization problems and their population counterparts.  There remains much to be done, such as
the convergence rate and asymptotic distributions under relaxed assumptions and for general composite piecewise smooth estimation problems,
refined connections between solutions of various kinds of the empirical problems and their analogs in the  population formulations,
and finally understanding the desirable merits and undesirable drawbacks
of the  stationary points and values obtained from numerical optimization algorithms in nonconvex estimation processes.



\begin{thebibliography}{000}
		
		
		\bibitem{Attouch84}
		{\sc H.\ Attouch}.
		{\sl Variational Convergence for Functions and Operators}.
		Pitman Press, Boston (1984).
		
		\bibitem{Bagirov10}
		{\sc A.M.\ Bagirov, C.\ Clausen, and M.\ Kohler}.
		An algorithm for the estimation of a regression function by continuous piecewise linear functions.
		{\sl Computational Optimization and Applications} 45 (2010) 159--179.
		
		\bibitem{bartlett2002rademacher}
		{\sc P.\ Bartlet and S.\ Mendelson}.
		Rademacher and Gaussian complexities: Risk bounds and structural results.
		{\sl Journal of Machine Learning Research} 3 (2002) 463--482.
					
		\bibitem{bryc2012normal}
		{\sc W.\ Bryc}.
		{\sl The Normal Distribution: Characterizations with Applications.}
		Springer Science \& Business Media, 2012
		
		\bibitem{candes2015phase}
		{\sc E.J.\ Candes, X.\ Li, M.\ Soltanolkotabi}.
		Phase retrieval via Wirtinger flow: Theory and algorithms.
		{\sl IEEE Transactions on Information Theory} 61 (2015) 1985–-2007.
		
		\bibitem{chen2019gradient}
		{\sc Y.\ Chen, Y.\ Chi, J.\ Fan and C.\ Ma}.
		Gradient descent with random initialization: fast global convergence for nonconvex phase retrieval.
		{\sl Mathematical Programming} 176 (2019) 5--37.
		
		
		
		\bibitem{chernoff1954distribution}
		{\sc H.\ Chernoff}.
		On the distribution of the likelihood ratio.
		{\sl The Annals of Mathematical Statistics} (1954) 573--578.
		
		\bibitem{Chung54}
		{\sc K.\ Chung}.
		On a stochastic approximation method.
		{\sl  The Annals of Mathematical Statistics} (1954) 463--483.
		
		\bibitem{Clarke83}
		{\sc F.H.\ Clarke}.
		{\sl Optimization and Nonsmooth Analysis}.
		Classics in Applied Mathematics.
		SIAM, Volume 5, 1990.
		[Reprint from John Wiley (New York 1983).]
		
		\bibitem{CuiPangSen18}
		{\sc Y.\ Cui, J.S.\ Pang, and B.\ Sen}.
		Composite difference-max programs for modern statistical estimation problems.
		{\sl SIAM Journal on Optimization} 28 (2018) 3344--3374.
		
		\bibitem{CuiPangHongChang18}
		{\sc Y.\ Cui, T.H.\ Chang, M.\ Hong and J.S.\ Pang}.
		A study of piecewise linear-quadratic programs.
		{\sl arXiv:1709.05758} (2018).
		
		\bibitem{DentchevaPenevRucz17}
		{\sc D.\ Dentcheva, S.\ Penev, and A.\ Ruszczynski}.
		Statistical estimation of composite risk functionals and
		risk optimization problems.
		{\sl Annals of the Institute of Statistical Mathematics} 69 (2017) 737--760.
		
		\bibitem{Dima18}
		{\sc D.\ Davis, D.\ Drusvyatskiy and C.\ Paquette}.
		The nonsmooth landscape of phase retrieval.
		{\sl arXiv:1711.03247} (2018).
		
		
		\bibitem{duchi2018solving}
		{\sc J.\ Duchi and F.\ Ruan}.
		Solving (most) of a set of quadratic equalities: Composite optimization for robust phase retrieval.
		{\sl Information and Inference: A Journal of the IMA} 8 (2018) 471--529.
		
		
		
		
		\bibitem{DupacovaWets88}
		{\sc J.\ Dupa{\v c}ov{\'a} and R.J.-B.\ Wets}.
		Asymptotic behavior of statistical estimators and of optimal
		solutions of stochastic optimization problems.
		{\sl The Annals of Statistics} 16 (1988) 1517--1549.
		
		\bibitem{HannahDunson13}
		{\sc D.\ Dunson and L.A.\ Hannah}.
		Multivariate convex regression with adaptive partitioning.
		{\sl Journal of Machine Learning Research} 14 (2013) 3261--3294.
		
		\bibitem{FacchineiPang03}
		{\sc F.\ Facchinei and J.S.\ Pang}.
		{\sl Finite-dimensional Variational Inequalities and Complementarity Problems}.
		Springer, New York (2003).
		
		\bibitem{Ferguson96}
		{\sc T.S.\ Ferguson}.
		{\sl  A Course in Large Sample Theory}. Routledge (1996).
		
		\bibitem{ra1922mathematical}
		{\sc  R.A.\ Fisher}.
		On the mathematical foundations of theoretical statistics.
		{\sl Philosophical Transactions of the Royal Society A} 222 (1922) 594--604 .
		
		\bibitem{fisher1925theory}
		{\sc  R.A.\ Fisher}.
		Theory of statistical estimation.
		{\sl Mathematical Proceedings of the Cambridge Philosophical Society} 22 (1925)  700--725.
		
		\bibitem{geyer1994asymptotics}
		{\sc  C.J.\ Geyer}.
		On the asymptotics of constrained $M$-estimation.
		{\sl The Annals of Statistics} 22 (1994) 1993--2010.
		
		\bibitem{GlorotBordesBengio2011}
		{\sc X.\ Glorot, A.\ Bordes, and Y.\ Bengio}.
		Deep sparse rectifier neural networks.
		{\sl In  Proceedings of the 14th International Conference on Artificial Intelligence and Statistics}  (2011) 315--323.
		
		\bibitem{GurkanRobinson99}
		{\sc G.\ G{\"u}rkan, A.\ Yonca {\"O}zge and S.\ Robinson}.
		Sample-path solution of stochastic variational inequalities.
		{\sl Mathematical Programming} 84 (1999)  313--333.
		
		\bibitem{huber1967behavior}
		{\sc  P.J.\ Huber}.
		The behavior of maximum likelihood estimates under nonstandard conditions.
		{\sl Proceedings of the fifth Berkeley symposium on mathematical statistics and probability} (1967)
		Volume 1, Pages 221--233 University of California Press.
		
		\bibitem{JinJordan18}
		{\sc C.\ Jin, L.\ Liu, R.\ Ge and  M.\ Jordan}.
		On the local minima of the empirical risk.
		In {\sl Proceedings of Neural Information Processing Systems (NIPS)} 2018.
		
		\bibitem{king1987asymptotic}
		{\sc  A.J.\ King}.
		Asymptotic behaviour of solutions in stochastic optimization: nonsmooth analysis and the derivation of non-normal limit distributions.
		{\sl Ph.D.\ dissertation}, Department of Mathematics, University of Washington, Seattle (1993).
		
		\bibitem{KingRockafellar93}
		{\sc A.\ King and R.T.\ Rockafellar}.
		Asymptotic theory for solutions in statistical estimation and stochastic programming.
		{\sl Mathematics of Operations Research} 18 (1993) 148--162.
		
		
		\bibitem{lecam1970assumptions}
		{\sc  L.\ LeCam}.
		On the assumptions used to prove asymptotic normality of maximum likelihood estimates.
		{\sl The Annals of Mathematical Statistics} 41 (1970)  802--828.
		
		\bibitem{LeThiPham05}
		{\sc H.A.\ Le Thi and D.T.\ Pham}.
		The DC programming and DCA revised with DC models of real world nonconvex optimization problems.
		{\sl Annals of Operations Research} 133 (2005) 25--46.
		
		
		\bibitem{loh2011high}
		{\sc P.L.\ Loh and M.\ Wainwright}.
		High-dimensional regression with noisy and missing data: Provable guarantees with non-convexity.
		{\sl The Annals of Statistics} 40 (2012) 1637--1664.
		
		\bibitem{loh2015regularized}
		{\sc P.L.\ Loh and M.\ Wainwright}.
		Regularized M-estimators with nonconvexity: statistical and algorithmic theory for local optima.
		{\sl Journal of Machine Learning Research} 16 (2015) 559--616.
		
		\bibitem{loh2017statistical}
		{\sc P.L.\ Loh}.
		Statistical consistency and asymptotic normality for high-dimensional robust $ M $-estimators.
		{\sl The Annals of Statistics} 45 (2017) 866--896.
		
		\bibitem{LuZhouSun18}
		{\sc Z.\ Lu, Z.\ Zhou and Z.\ Sun}.
		Enhanced proximal DC algorithms with extrapolation for a class of structured nonsmooth DC minimization.
		{\sl Mathematical Programming} 176 (2019)  369--401.
		
		\bibitem{ma2019optimization}
		{\sc J.\ Ma, J.\  Xu and A. \ Maleki}.
		Optimization-based AMP for phase retrieval: the impact of initialization and $\ell_2$ regularization.
		{\sl  IEEE Transactions on Information Theory} 65 (2019) 3600--3629.
		
		\bibitem{MeiMontanari17}
		{\sc S.\ Mei, Y.\ Bai and A.\ Montanari}.
		The landscape of empirical risk for non-convex losses.
		{\sl The Annals of Statistics} 46 (2018) 2747--2774.
		
		\bibitem{Molchanov05}
		{\sc I.\ Molchanov}.
		{\sl Theory of Random Sets}. Vol. 19, no. 2.
		Springer, London (2005).
		
		\bibitem{robustSA09}
		{\sc A.\ Nemirovski, A.\ Juditsky, G.\  Lan and A.\ Shapiro}.
		Robust stochastic approximation approach to stochastic programming.
		{\sl SIAM Journal on  Optimization} 19 (2009) 1574--1609.
		
		\bibitem{NairHinton2010}
		{\sc V.\ Nair and G.E.\ Hinton}.
		Rectified linear units improve restricted Boltzmann machines.
		{\sl In Proceedings of the 27th International Conference on Machine Learning}  (2010) 807--814.
		
		
		\bibitem{PangRazaviyaynAlvarado16}
		{\sc J.S.\ Pang, M.\ Razaviyayn, and A.\ Alvarado}.
		Computing B-stationary points of nonsmooth DC programs.
		{\sl Mathematics of Operations Research} 42 (2016) 95--118.
		
		\bibitem{Polyak90}
		{\sc B.\ Polyak}.
		New stochastic approximation type procedures.
		{\sl Avtomatica i Telemekhanika} 7 (1990) 98--107.
		
		\bibitem{Polyak92}
		{\sc B.\ Polyak and A.\ Juditsky}.
		Acceleration of stochastic approximation by averaging.
		{\sl  SIAM Journal on  Control and Optimization} 30 (1992) 838--855.
		
		\bibitem{RobbinsMonro1951}
		{\sc H.\ Robbins and S.\ Monro}.
		A stochastic approximation method.
		{\sl Annals of  Mathematical Statistics} 22 (1951) 400--407.
		
		\bibitem{RockafellarConvex}
		{\sc R.T.\ Rockafellar}.
		{\sl Convex Analysis}.
		Princeton University Press, Princeton (1970).
		
		\bibitem{RockafellarRWets98}
		{\sc R.T.\ Rockafellar and R.J.-B.\ Wets}.
		{\sl Variational Analysis}. Springer, New York (1998).
		
		\bibitem{Royset19}
		{\sc J.\ Royset}.
		Approximations of semicontinuous functions
		with applications to stochastic optimization and statistical estimation.
		{\sl Mathmeatical Programming, Series A}, \url{https://doi.org/10.1007/s10107-019-01413-z}.
		
		\bibitem{RoysetWets18}
		{\sc J.\ Royset and R.J.-B.\ Wets}.
		Variational analysis of constrained M-estimators.
		http://arxiv.org/abs/1702.08109v4 (May 2018).
		
		\bibitem{Shechtman15}
		{\sc Y.\ Shechtman, Y.\ Eldar, O.\ Cohen, H.\ Chapman, J.\ Miao and M.\ Segev}.
		Phase retrieval with application to optical imaging: a contemporary overview.
		{\sl IEEE signal processing magazine} 32 (2015) 87--109.
		
		\bibitem{Scholtes02}
		{\sc S.\ Scholtes}.
		{\sl Introduction to Piecewise Differentiable Equations}.
		Springer Briefs in Optimization (2002).
		
		\bibitem{self1987asymptotic}
		{\sc  S.G.\ Self and K.Y.\ Liang}.
		Asymptotic properties of maximum likelihood estimators and likelihood ratio tests under nonstandard conditions.
		{\sl Journal of the American Statistical Association} 82 (1987)  605--610.
		
		\bibitem{Shapiro89}
		{\sc A.\ Shapiro}.
		Asymptotic properties of statistical estimators in stochastic programming.
		{\sl The Annals of Statistics} 17 (1989) 841--858.
		
		\bibitem{Shapiro03}
		{\sc A.\ Shapiro}.
		Monte Carlo sampling methods.
		In Rusczy{\'n}ski and Shapiro (eds.) Stochastic Programming, {\sl Handbooks in OR \& MS}.
		Volume 10.  Amsterdam: NorthHolland Publishing Company (2003).
		
		\bibitem{ShapiroDR09}
		{\sc A.\ Shapiro, D.\ Dentcheva, and A.\ Ruszczy{\'n}ski}.
		{\sl Lectures on Stochastic Programming: Modeling and Theory}.
		SIAM Publications (Philadelphia 2009).
		
		\bibitem{shapiro2007uniform}
		{\sc  A.\ Shapiro and H.\ Xu}.
		Uniform laws of large numbers for set-valued mappings and subdifferentials of random functions.
		{\sl Journal of Mathematical Analysis and Applications} 352 (2007)  1390--1399.
		
		
		\bibitem{van1998asymptotic}
		{\sc A.W.\ van der Vaart}.
		{\sl Asymptotic Statistics}
		Vol. 3, Cambridge University Press (1998).
		
		\bibitem{van1996weak}
		{\sc A.W.\ van der Vaart and J. \ Wellner}.
		{\sl Weak Convergence and Empirical Processes}
		Springer Series in Statistics (1996).
		
		
		\bibitem{van2000empirical}
		{\sc S.\ van de Geer}.
		{\sl Empirical Processes in M-estimation}
		vol. 6, Cambridge University Press (2000).
		
		\bibitem{Xu10}
		{\sc H.\ Xu}.
		Sample average approximation methods for a class of stochastic variational inequality problems.
		{\sl Asia-Pacific Journal of Operational Research} 27 (2010) 103--119.
		
		\bibitem{XuZhang09}
		{\sc  H.\ Xu and D.\ Zhang}.
		Smooth sample average approximation of stationary points in nonsmooth stochastic optimization and applications.
		{\sl Mathematical Programming} 119 (2009) 371--401.
		
		\bibitem{wald1949note}
		{\sc  A.\ Wald}.
		Note on the consistency of the maximum likelihood estimate.
		{\sl The Annals of Mathematical Statistics} 20 (1949) 595--601.
		
		\bibitem{wang2018solving}
		{\sc G.\ Wang, G.B.\ Giannakis, and Y.C.\ Eldar}.
		Solving systems of random quadratic equations via truncated amplitude flow.
		{\sl IEEE Transactions on Information Theory} 64 (2018) 773–794.
				
		\bibitem{Wets79}
		{\sc R.J.-B\ Wets}.
		A statistical approach to the solution of stochastic programs with (convex) simple recourse.
		Working Paper, University of Kentucky (1979).
		
		
		
		
	\end{thebibliography}
\end{document}